\documentclass[a4paper,reqno,12pt]{amsart}
\usepackage[T1]{fontenc}
\usepackage[utf8]{inputenc}
\usepackage{amsmath,amssymb,amsfonts,amsthm,amscd}
\usepackage{mathrsfs}
\usepackage{calligra}
\usepackage{bm}
\usepackage{bbm}

\usepackage{booktabs}   
\usepackage{tabularx}   

\usepackage{tikz}
\usetikzlibrary{arrows.meta, positioning, calc}
\usepackage{pgf}
\usepackage{tikz-cd}
\usepackage{tkz-graph}
\usepackage[all,2cell]{xy}

\usepackage{graphicx}
\usepackage{multicol}
\usepackage{enumerate}
\usepackage{comment}
\usepackage{rotating}
\usepackage[normalem]{ulem}
\usepackage{xcolor}

\usetikzlibrary{decorations.markings,arrows}

\tikzset{
  midarrow/.style={
    postaction=decorate,
    decoration={
      markings,
      mark=at position .5 with {\arrow{stealth}}
    }
  }
}

\allowdisplaybreaks

\UseAllTwocells
\SilentMatrices

\numberwithin{equation}{section}

\usepackage[
  pagebackref,
  colorlinks=true,
  linkcolor=violet,
  citecolor=teal,
  urlcolor=blue,
  hypertexnames=false
]{hyperref}

\newtheorem{thm}{Theorem}[section]
\newtheorem{cor}[thm]{Corollary}
\newtheorem{lem}[thm]{Lemma}
\newtheorem{prop}[thm]{Proposition}

\theoremstyle{definition}
\newtheorem{dfn}[thm]{Definition}

\theoremstyle{remark}
\newtheorem{rmk}[thm]{Remark}

\newcommand{\rad}{\operatorname{rad}}
\newcommand{\nil}{\operatorname{Nil}}

\newcommand{\Prim}{\operatorname{Prim}}
\newcommand{\Spec}{\operatorname{Spec}}

\newcommand{\Aut}{\operatorname{Aut}}

\newcommand{\Int}{\operatorname{Int}}

\newcommand{\reg}{\operatorname{Reg}}

\newcommand{\gr}{\operatorname{gr}}

\def\-{\text{-}}

\newcommand{\Ann}{\operatorname{Ann}}

\newcommand{\KP}{\operatorname{KP}}

\newcommand{\N}{\mathbf{n}}
\newcommand{\M}{\mathbf{m}}
\newcommand{\bP}{\mathbf{p}}
\newcommand{\Q}{\mathbf{q}}

\begin{document}

\pagestyle{headings}

\title[Talented monoids of higher-rank graphs]{The prime spectrum of the talented monoid of a higher-rank graph and applications}

\author{Roozbeh Hazrat}
\address{Roozbeh Hazrat: Centre for Research in Mathematics and Data Science\\Western Sydney University, Australia} \email{\url{r.hazrat@westernsydney.edu.au}}

\author{Promit Mukherjee}
\address{Promit Mukherjee: Department of Mathematics, Jadavpur University, Kolkata-700032, India} \email{\url{promitmukherjeejumath@gmail.com}}

\subjclass[2020]{Primary: 06F05, 20M32, 54B35; Secondary: 16S88, 16W50}

\keywords{Higher-rank graph, talented monoid, Kumjian--Pask algebra, prime spectrum, regular ideals, regular open sets, maximal tails}

\begin{abstract}
In this paper, we further investigate the role of the graded Grothendieck group $K_0^{\gr}$ and its positive cone (the talented monoid) as an effective tool for distinguishing structural types of algebras associated to higher-rank $k$-graphs. We study the prime spectrum (the space of all prime $\Gamma$-order ideals equipped with a Zariski-like topology) of a general commutative $\Gamma$-monoid and establish that this space is spectral in the sense of Hochster, provided the monoid has the refinement property and every $\Gamma$-order ideal is finitely generated. As a result we are able to show that the prime spectrum of the talented monoid of a row-finite $k$-graph without sources and with a finite set of vertices is spectral. 

For any row-finite $k$-graph $\Lambda$ without sources, one of our main results says that the space of all graded prime ideals of the Kumjian--Pask algebra $\KP(\Lambda)$ is homeomorphic to both the space of all prime $\mathbb{Z}^k$-order ideals and the space of all prime $\mathbb{Z}^k$-filters of the talented monoid $T_\Lambda$. Another main result of this paper provides a complete topological description of regular $\Gamma$-order ideals of a refinement $\Gamma$-monoid: a $\Gamma$-order ideal $J$ is regular if and only if the corresponding closed (resp., open) set $V(J)$ (resp., $D(J)$) is regular closed (resp., regular open) in the prime spectrum. As an application of these results, we establish a lattice isomorphism between the lattice of all regular $\mathbb{Z}^k$-order ideals of the talented monoid and the lattice of all regular graded ideals of the Kumjian--Pask algebra via a spectrum-theoretic approach.  


\end{abstract}

\maketitle

\tableofcontents

\section{Introduction}\label{sec intro}

It was already recognized by Cuntz and Krieger~\cite{cuntzkrieger} in the early 1980s that purely infinite simple $C^*$-algebras arising from certain matrices are intimately connected with symbolic dynamics~\cite{LindMarcus}. Krieger’s dimension group~\cite{krieger1} provides a complete invariant for shift equivalence, establishing a fundamental connection between operator algebras and dynamical systems. With the introduction of Leavitt path algebras~\cite{lpabook}, this line of research moved towards a discrete algebraic setting, where the graded Grothendieck group, rather than ordinary \(K\)-theory alone, emerged as a natural bridge between algebra and dynamics \cite{haz2013,Hazrat-main}. 

Higher-rank graphs~\cite{KumjianPask} provide a higher-dimensional analogue of graphs and give rise to higher-dimensional Leavitt path algebras, commonly known as Kumjian--Pask algebras~\cite{Pino}. It has been established that these algebras could provide a natural and satisfactory algebraic framework for two-dimensional symbolic dynamics. However, an important question remains: \emph{what plays the role of Krieger’s dimension group in higher-dimensional dynamics?}

The graded Grothendieck group of higher-rank graph algebras, or equivalently its positive cone, the talented monoid, which can be defined directly from the higher-rank graph, appears to be a strong candidate for such an invariant. The talented monoid is proved to be quite effective in capturing important geometries of a higher-rank graph  (see \cite{HMPS1,HLM}), which govern the structural properties of the associated Kumjian--Pask algebra. Our program is to explore the triangle formed by higher-dimensional dynamics, higher-dimensional algebras, and graded Grothendieck groups, and to investigate how the talented monoid may serve as a junction connecting these three perspectives. An important part of this big program is to find an answer to the following question.  

\textbf{The guiding question:} \emph{Given row-finite higher-rank graphs $\Lambda$ and $\Omega$ without sources, if there is a $\mathbb Z^k$-monoid isomorphism between the talented monoids $T_\Lambda$ and $T_\Omega$, then to what extent can one conclude that the associated Kumjian--Pask algebras $\operatorname{KP}(\Lambda)$ and $\operatorname{KP}(\Omega)$ have the same structural type?} 


In this quest, so far we have obtained talented monoid criteria for some important classes of Kumjian--Pask algebras (see Table \ref{tab:kp_algebras_criteria} below; the criteria for primeness and pure infiniteness are obtained in this paper, while the rest are collected from \cite{HMPS1, HLM}). In cases where the criterion is both necessary and sufficient, talented monoids (equivalently, the graded Grothendieck group) can effectively distinguish the relevant class of Kumjian--Pask algebras.

\begin{table}[htbp]
    \centering
    \small 
    \begin{tabularx}{\textwidth}{
        >{\hsize=1.1\hsize\raggedright\arraybackslash}X 
        >{\hsize=1.4\hsize\raggedright\arraybackslash}X 
        >{\hsize=0.5\hsize\raggedright\arraybackslash}X
    }
        \toprule
        \textbf{Structural Type} & \textbf{Talented Monoid Criterion} & \textbf{Nature} \\
        \midrule
        \midrule
        Graded-simple & $T_\Lambda$ is simple as a $\mathbb{Z}^k$-monoid & Necessary and sufficient \\
        \addlinespace
        Simple & $T_\Lambda$ is simple as a $\mathbb{Z}^k$-monoid and $\mathbb{Z}^k$ acts freely on $T_\Lambda$ & Sufficient \\
        \addlinespace
        Semisimple & $T_\Lambda$ is atomic and $\mathbb{Z}^k$ acts freely on $T_\Lambda$ & Necessary and sufficient \\
        \addlinespace
        Purely infinite simple & $T_\Lambda$ is simple as a $\mathbb{Z}^k$-monoid and $\mathbb{Z}^k$ acts freely on $T_\Lambda$, and for any $0\neq b\in T_\Lambda$, there exist $a\in T_\Lambda$ and $\mathbf{n}\in \mathbb{N}^k$ such that $b\ge a > {}^{\mathbf{n}}a$ & Sufficient \\
        \addlinespace
        Ultramatricial ($\Lambda^0$ is finite) & $\mathbb{Z}^k$ does not act freely on $T_\Lambda$ & Necessary \\
        \addlinespace
        Locally finite ($\Lambda^0$ is finite) & For any $0\neq a\in T_\Lambda$ and $i=1,2,\ldots,k$, there exists $n_i\in \mathbb{N}\setminus \{0\}$ such that ${}^{n_i\mathbf{e}_i}a=a$ & Necessary and sufficient \\
        \addlinespace
        Crossed product ($\Lambda^0$ is finite) & ${}^{\mathbf{n}} \epsilon_\Lambda=\epsilon_\Lambda$ for all $\mathbf{n}\in \mathbb{Z}^k$ & Necessary and sufficient \\
        \addlinespace
        Prime & \{0\} is a prime $\mathbb{Z}^k$-order ideal of $T_\Lambda$ & Necessary and sufficient\\
        \addlinespace
        Purely infinite & For any $\mathbb{Z}^k$-order prime ideal $J$ of $T_\Lambda$, $\mathbb{Z}^k$ acts freely on the quotient $S=T_\Lambda/J$, and for any $0\neq a\in S$, there exist $x\in S$ and $\mathbf{n}\in \mathbb{N}^k$ such that ${}^{\mathbf{n}}x < x \le a$ & Sufficient \\
        \addlinespace
        \bottomrule
    \end{tabularx}
\vspace{0.15cm}
\caption{Talented monoid criteria for different structural types of Kumjian--Pask algebras}
    \label{tab:kp_algebras_criteria}
\end{table}

The objective of this paper is to explore whether the talented monoid of a higher-rank graph carries information about the (graded) prime and primitive ideals of higher-rank graph algebras.

It is evident from the mathematical literature that the study of prime ideals of rings and associative algebras helps substantially in understanding their ideal and module theory, and is therefore crucial to unfold the properties of these algebraic structures. The interesting fact is that the collection of prime ideals, called the \emph{prime spectrum}, forms a topological space under the so-called Zariski topology, which enables one to gather information about the ring (algebra) by studying the topological space and vice-versa. 

There is a significant amount of work that has been devoted to the study of prime and primitive ideals of different combinatorial algebras; see \cite{BPRS, Hong} for graph $C^*$-algebras, \cite{ABR, APS} for Leavitt path algebras, \cite{KangPask} for higher-rank graph $C^*$-algebras, and \cite{Larki} for Kumjian--Pask algebras. The common fact that emerges from these works is that there is a close relationship between the prime (and primitive) ideals of graph (higher-rank graph) algebras and certain subsets of vertices of the underlying graph (higher-rank graph), known as \emph{maximal tails}. As the talented monoid of a higher-rank graph is defined directly from the combinatorial data, we are naturally motivated to investigate whether the maximal tails of a higher-rank graph can be described using the monoid-theoretic language. This sets the stage for the development of this paper. We show that the maximal tails are in an order-reversing bijection with the prime $\mathbb{Z}^k$-order ideals of the talented monoid. Motivated by this, we execute in this paper a detailed study of the space of all prime $\Gamma$-order ideals of an arbitrary commutative $\Gamma$-monoid $M$. The space is denoted by $\Spec_\Gamma(M)$ and is equipped with a Zariski-like topology. We show that when the monoid $M$ has some nice properties, then $\Spec_\Gamma(M)$ behaves quite like the prime spectrum of a commutative unital ring; for example, as one of the major results, we are able to show that $\Spec_\Gamma(M)$ is spectral in the sense of Hochster \cite{Hochster}, when $M$ is conical, has the refinement property, and every $\Gamma$-order ideal is finitely generated. As a consequence, it follows that the prime spectrum of the talented monoid of a row-finite higher-rank graph without sources and with finitely many vertices is spectral (Corollary \ref{cor spec of TM is compact}). 

Using the algebraic preorder of a commutative $\Gamma$-monoid $M$, we also define prime $\Gamma$-filters on $M$ and show that these are exactly the duals of prime $\Gamma$-order ideals. This helps us to convert the anti-isomorphism between the poset of maximal tails of a higher-rank graph and the poset of all prime $\mathbb{Z}^k$-order ideals of the talented monoid into an order-isomorphism between the former and the poset of all prime $\mathbb{Z}^k$-filters of the talented monoid (Theorem \ref{th maximal tails and prime filters}). It is known that the space of maximal tails of a higher-rank graph $\Lambda$ can be given a specific $T_0$ topology, and when the $k$-graph is strongly aperiodic, this space is homeomorphic to the primitive ideal space (which coincides with the prime spectrum) of the associated $C^*$-algebra $C^*(\Lambda)$ as well as the Kumjian--Pask algebra $\KP(\Lambda)$ of $\Lambda$ equipped with the Jacobson hull-kernel topology (see \cite{KangPask} and \cite{Larki}). Here we show that the space of maximal tails is homeomorphic to the prime spectrum (and the space of prime $\mathbb{Z}^k$-filters) of the talented monoid (Theorem \ref{th the three term connection for KPA}). This, together with the spectrality of the prime spectrum of the talented monoid, shows that the space of maximal tails of $\Lambda$ and the space of graded prime ideals of $\KP(\Lambda)$ are spectral in the strong sense of Hochster, and provides a new proof of the sobriety of the space of maximal tails (\cite[Proposition 4.4]{KangPask}) from a talented monoid perspective. Moreover, as immediate consequences, we are able to show that the talented monoid can detect the class of prime Kumjian--Pask algebras (Theorem \ref{cor TM characterization of prime KPA}) and also the class of primitive higher-rank graph algebras (Theorem \ref{th TM detects primitive ideal space}). 

The study of regular ideals of graph algebras has seen a notable surge of activity in recent years (see for instance \cite{Pitts,RegLPA,Schenkel}). In \cite{CGHaz}, Cordeiro, Gonc\c alves and the first author defined the notions of orthogonality and regularity in the setting of $\Gamma$-monoids. Specializing to talented monoids of directed graphs, it was established in \cite[Corollary 4.5]{CGHaz} that there is a one-to-one correspondence between the regular $\mathbb{Z}$-order ideals of the talented monoid of a row-finite directed graph and the regular ideals of the corresponding Leavitt path algebra. As another important result of this paper, we prove Theorem \ref{th regular ideals via the prime spectrum}, which sets up an isomorphism between the poset of all regular $\Gamma$-order ideals of a commutative conical refinement $\Gamma$-monoid $M$ and the Boolean algebra of all regular open sets of $\Spec_\Gamma(M)$. In order to extend \cite[Corollary 4.5]{CGHaz} to higher dimensions, we here adopt a novel approach by using Theorem \ref{th regular ideals via the prime spectrum} and the established homeomorphism between the prime spectrum of the talented monoid and the graded prime spectrum of the Kumjian--Pask algebra. This yields Theorem \ref{th lattice isomorphism of regular ideals via TM} showing that the isomorphism between talented monoids of two higher-rank graphs leaves invariant the class of regular graded ideals of the associated Kumjian--Pask algebras. From this theorem, it follows that the homeomorphism between the prime spectra of the talented monoid and the Kumjian--Pask algebra preserves the respective subspaces of regular primes. We also identify the geometric counterpart of the subspace of regular prime $\mathbb{Z}^k$-order ideals as the subspace of those maximal tails $M$, which coincide with the \emph{reachability sets} (\ref{eq reachability}) of the \emph{inaccessibility sets} (\ref{eq inaccessibility}) of $\Lambda^0\setminus M$.   

We now briefly describe the arrangement of the paper. 

In Section \ref{sec prelim}, we recall the preliminary concepts, which include the relevant topological and algebraic concepts, a brief overview of higher-rank graphs, Kumjian--Pask algebras, and $\Gamma$-monoids. 

Section \ref{sec prime spec of monoid} offers a study of the prime spectrum $\Spec_\Gamma(M)$ of a commutative $\Gamma$-monoid $M$. In Proposition \ref{pro basic facts about basic open sets}, we prove several topological properties of this space equipped with a Zariski-like topology. In particular, we show that under certain conditions, this is a spectral space. This directly shows that the prime spectrum of the talented monoid of a row-finite $k$-graph without sources and with a finite object set is spectral (Corollary \ref{cor spec of TM is compact}). 

In Section \ref{sec regular ideals}, we focus on the regular $\Gamma$-order ideals of a conical refinement monoid $M$ and provide a spectrum-theoretic description of these ideals in Theorem \ref{th regular ideals via the prime spectrum}. As a consequence, we show that the poset of all regular $\Gamma$-order ideals becomes a complete Boolean algebra and give explicit descriptions of the join, meet and complement. In this section, we also introduce the notion of (prime) $\Gamma$-filters of a $\Gamma$-monoid $M$ and connect these with the prime $\Gamma$-order ideals (see Proposition \ref{pro dual of prime order ideals} and Corollary \ref{cor regular prime order ideals}). 

In Section \ref{sec maximal tail and TM}, we concentrate on talented monoids of higher-rank graph which are special combinatorial $\mathbb{Z}^k$-monoids. We first show that the maximal tails of a $k$-graph $\Lambda$ can effectively be described via the talented monoid (Theorem \ref{th maximal tails via TM} and Theorem \ref{th maximal tails and prime filters}), which eventually leads us to establish that the space of all maximal tails of $\Lambda$, the space of all prime $\mathbb{Z}^k$-order ideals of $T_\Lambda$, the space of all prime $\mathbb{Z}^k$-filters of $T_\Lambda$, and the space of all graded prime ideals of the Kumjian--Pask algebra $\KP(\Lambda)$ are all homeomorphic to each other (Theorem \ref{th the three term connection for KPA}). We also provide necessary and sufficient graph-theoretic and talented monoid-theoretic criteria for the graded prime spectrum of the Kumjian--Pask algebra to be connected as a topological space.

The paper concludes in Section \ref{sec applications}, where using the connection between spectra of monoids and algebras in the previous sections, we show that the Boolean algebra of regular $\mathbb{Z}^k$-order ideals of the talented monoid is isomorphic to the Boolean algebra of all regular graded ideals of the Kumjian--Pask algebra (Theorem \ref{th lattice isomorphism of regular ideals via TM}). Apart from this, we also obtain Theorem \ref{th sufficient criterion for purely infinite higher-rank algebras} which gives talented monoid criteria for purely infinite higher-rank graph algebras ($C^*$-algebras and Kumjian--Pask algebras).

\section{Background materials}\label{sec prelim}
\subsection{General concepts} Since in this paper we will be dealing with the graded prime spectrum of certain graded rings (Kumjian--Pask algebras and Leavitt path algebras), it is important to recall the prerequisite ring-theoretic and topological concepts.

We start with some topological preliminaries. Let $X$ be a topological space. A subset $U$ of $X$ is called \emph{regular open} (resp., \emph{regular closed}) if $U=\Int(\overline{U})$ (resp., $U=\overline{\Int(U)}$). The collection of all regular open (closed) sets forms a complete Boolean algebra partially ordered by set inclusion. For two regular open sets $U$ and $V$, their meet is defined as $U\wedge V:=U\cap V$, and their join is defined as $U\vee V:=\Int(\overline{U\cup V})$. The complement of a regular open set $U$ is given as $U':=\Int(X\setminus U)=X\setminus \overline{U}$. We denote the Boolean algebra of all regular open sets of $X$ by $\mathcal{R}\mathcal{O}(X)$. For a more comprehensive idea about the notion of regular open sets, the reader may consult \cite[Chapter 9--10]{BooleanBook}.

We now recall the definition of spectral spaces.

\begin{dfn}\label{def spectral spaces}
Let $X$ be a topological space. Then $X$ is called

$(i)$ \emph{spectral in the sense of Hofmann--Keimel} \cite{HofKei} if $X$ is $T_0$ and every nonempty irreducible set has a generic point.

$(ii)$ \emph{spectral in the sense of Hochster} \cite{Hochster} if $X$ is compact, admits a base of compact sets closed under finite intersection, and is spectral in the sense of Hofmann--Keimel.
\end{dfn}

Suppose $R$ is a (not necessarily unital) ring (or algebra). 

\begin{itemize}
    \item[$(i)$] A proper ideal $P$ of $R$ is called \emph{prime} if for any two ideals $I,J$ of $R$, $IJ\subseteq P$ implies either $I\subseteq P$ or $J\subseteq P$. The ring $R$ is called a \emph{prime ring} if $\{0\}$ is a prime ideal.

    \item[$(ii)$] A proper ideal $I$ of $R$ is called \emph{left primitive} (resp., \emph{right primitive}) if $I=\Ann_R(M)$ for some simple left (resp., right) $R$-module $M$. The ring $R$ is called \emph{left primitive} (resp., \emph{right primitive}) if $\{0\}$ is a left (resp., right) primitive ideal. If $R$ is both left and right primitive, then it is called \emph{primitive}. 
\end{itemize}

The set of all prime (resp., primitive) ideals of a ring $R$ is denoted by $\Spec(R)$ (resp., $\Prim(R)$). A standard fact in ring theory is that $\Prim(R)\subseteq \Spec(R)$. The set $\Spec(R)$ is equipped with the well-known Zariski topology and is called the \emph{prime spectrum} of $R$. In the remarkable paper of Hochster \cite{Hochster}, it was established that a topological space $X$ is spectral (according to Definition \ref{def spectral spaces}$(ii)$) if and only if $X$ is homeomorphic to $\Spec(R)$ for a commutative unital ring $R$. 

Let $R$ be a ring and $\Gamma$ an abelian group. Then $R$ is called \emph{graded by $\Gamma$} or a $\Gamma$-\emph{graded ring} if there is a collection $\{R_\alpha~|~\alpha\in \Gamma\}$ of additive subgroups of $R$ such that 

$(i)$ $R=\displaystyle{\bigoplus_{\alpha\in \Gamma}}~ R_\alpha$ as groups, and 

$(ii)$ $R_\alpha R_\beta\subseteq R_{\alpha\beta}$. 

The subgroup $R_\alpha$ is called the \emph{$\alpha^{\text{th}}$ homogeneous component} of $R$. The \emph{set of homogeneous elements} of $R$ is defined as \[R^h:=\displaystyle{\bigcup_{\alpha\in \Gamma}} R_\alpha.\] An ideal $I$ of $R$ is said to be a \emph{graded ideal} if $I=\displaystyle{\bigoplus_{\alpha\in \Gamma}}~(I\cap R_\alpha)$. 

When $R$ is a $\Gamma$-graded ring, one can define the graded analogue of the notion of prime ideals. A proper graded ideal $P$ of $R$ is called \emph{graded prime} if for any two graded ideals $I,J$ of $R$, $IJ\subseteq P$ implies $I\subseteq P$ or $J\subseteq P$. The set of all graded prime ideals of $R$ is denoted by $\Spec^{\gr}(R)$ and is called the \emph{graded spectrum} of $R$. 

One can define a certain topology on $\Spec_\Gamma(R)$ which mimics the Zariski topology on the spectrum of a commutative ring. For any graded ideal $I$ of $R$, we denote \[V^{\gr}(I):=\{P\in \Spec^{\gr}(R)~|~I\subseteq P\}\] and \[D^{\gr}(I):=\Spec^{\gr}(R)\setminus V^{\gr}(I).\] It is then easy to check that the sets $V^{\gr}(I)$ satisfy the axioms to be the closed sets of a topology on $\Spec^{\gr}(R)$. A moment's thought yields that this topology has a basis consisting of open sets of the form \[D^{\gr}(x):=\{P\in \Spec^{\gr}(R)~|~x\notin P\},\] where $x\in R^h$.   

We finish this subsection by recalling purely infinite rings.

Let $R$ be any ring. For $a,b\in R$, we write $b\precsim a$ if there exist $x,y\in R$ such that $b=xay$. The ring $R$ is called \emph{purely infinite} \cite{AGPM} if 

$(i)$ no quotient of $R$ is a division ring, and 

$(ii)$ for all $a,b\in R$ with $b\in RaR$, $b\precsim a$.

If $\mathcal{A}$ is a $C^*$-algebra which is purely infinite according to the above definition, then it is purely infinite in the sense of Kirchberg--R\o rdam (\cite[Proposition 3.17]{AGPM}). 

\subsection{Higher-rank graphs and Kumjian--Pask algebras}\label{ssec hr graph} 
We record the definition of higher-rank graphs and the basic notions that will be used in this paper. For the concepts that are not described here, we refer the reader to \cite{Pino,HLM, HMPS1, KumjianPask}.

Throughout the paper, $\mathbb{N}$ stands for the set of nonnegative integers. 
\begin{dfn}\label{def higher-rank graphs}
Let $k$ be any positive integer. A \emph{higher-rank $k$-graph} is a pair $(\Lambda,d)$, where $\Lambda$ is a countably small category and $d:\Lambda\longrightarrow \mathbb{N}^k$ is a functor (called the \emph{degree} functor) which satisfies the \emph{factorization} property: whenever $d(\lambda)=\M+\N$ for some $\M,\N\in \mathbb{N}^k$, then there are unique $\alpha,\beta\in \Lambda$ such that $d(\alpha)=\M$, $d(\beta)=\N$, and $\lambda=\alpha\beta$. When the degree functor $d$ is understood from the context, the $k$-graph is simply denoted as $\Lambda$. 
\end{dfn}
The domain and codomain of $\lambda\in \Lambda$ are denoted by $s(\lambda)$ and $r(\lambda)$, respectively, and are called the \emph{source} and \emph{range} of $\lambda$. For $u,v\in \Lambda^0$ and $\N\in \mathbb{N}^k$, \[\Lambda^\N:=d^{-1}(\N),~u\Lambda^\mathbf{n}:=r_\Lambda^{-1}(u)\cap \Lambda^\mathbf{n},~\Lambda^\mathbf{n} v:=\Lambda^\mathbf{n}\cap s_\Lambda^{-1}(v),~u\Lambda^\mathbf{n} v:=u\Lambda^\mathbf{n} \cap \Lambda^\mathbf{n} v.\] 
Using the factorization property, it is easy to observe that the set of objects of the category $\Lambda$ coincides with $\Lambda^0$. The elements of $\Lambda^0$ are often called \emph{vertices}, and any element of $\Lambda$ is called a \emph{path}. 

For $\lambda\in \Lambda$, and $0\le \bP\le \Q\le d(\lambda)$, $\lambda(\bP,\Q)$ denotes the unique path (which exists by the factorization property) such that $\lambda=\alpha \lambda(\bP,\Q)\beta$ for some $\alpha\in \Lambda^{\bP}$ and $\beta\in \Lambda^{d(\lambda)-\Q}$.

A $k$-graph $\Lambda$ is called \emph{row-finite} if $|u\Lambda^\N|<\infty$ for all $u\in \Lambda^0$ and $\N\in \mathbb{N}^k$, and is called \emph{without sources} if $u\Lambda^\N\neq \emptyset$ for all $u\in \Lambda^0$ and $\N\in \mathbb{N}^k$. 

Certain subsets of the set of vertices $\Lambda^0$ will play important roles throughout the paper. We now collect the definitions.
\begin{dfn}\label{def H and S subsets}
Let $\Lambda$ be a row-finite $k$-graph without sources. A subset $H$ of $\Lambda^0$ is called 

$(i)$ \emph{hereditary}, if $v\in H$ implies $s(v\Lambda^\N)\subseteq H$ for all $\N\in \mathbb{N}^k$;

$(ii)$ \emph{saturated}, if $s(v\Lambda^\N)\subseteq H$ for some $\N\in \mathbb{N}^k$ implies $v\in H$. 
\end{dfn}
For any $X\subseteq \Lambda^0$, the smallest hereditary saturated subset containing $X$ is denoted by $\overline{X}$. The collection of all the hereditary saturated subsets of $\Lambda^0$ forms a lattice denoted by $\mathcal{H}_\Lambda$, where the join and meet operations are respectively defined as \[H_1\vee H_2:=\overline{H_1\cup H_2},~~H_1\wedge H_2:=H_1\cap H_2.\]
\begin{dfn}\label{def maximal tail}
Let $\Lambda$ be a row-finite $k$-graph without sources. A nonempty subset $M\subseteq \Lambda^0$ is called a \emph{maximal tail} if the following conditions are satisfied:

(MT1) whenever $w\in M$ and $v\in \Lambda^0$ with $v\Lambda w\neq \emptyset$, then $v\in M$;

(MT2) for any $v\in M$ and $i\in \{1,2,\ldots,k\}$, there exists $\lambda\in v\Lambda^{\mathbf{e}_i}$ such that $s(\lambda)\in M$;

(MT3) for any two $u,v\in M$, there exists $w\in M$ such that $u\Lambda w, v\Lambda w\neq \emptyset$. 
\end{dfn}

It is easy to observe that $H\subseteq \Lambda^0$ is a hereditary saturated subset if and only if $\Lambda^0\setminus H$ satisfies Condition (MT1) and (MT2) above. Accordingly, for any maximal tail $M$, the complement $\Lambda^0\setminus M$ is a hereditary saturated subset.

In \cite{KumjianPask}, Kumjian and Pask defined a universal $C^*$-algebra $C^*(\Lambda)$ corresponding to a row-finite $k$-graph $\Lambda$ without sources. This generalizes the Cuntz--Krieger algebra of a row-finite directed graph. In 2013, Aranda Pino et al. \cite{Pino} defined purely algebraic analogues of $k$-graph $C^*$-algebras which are known as Kumjian--Pask algebras. As the substitute of adjoints of elements in $C^*(\Lambda)$, they used \emph{ghost paths} in the discrete set-up. For any $\lambda\in \Lambda\setminus \Lambda^0$, the ghost path corresponding to $\lambda$ is denoted by $\lambda^*$ and is nothing but a formal symbol. 
\begin{dfn}\label{def KP algebra}
Let $\Lambda$ be a row-finite $k$-graph without sources. The \emph{Kumjian--Pask algebra} of $\Lambda$ over a field $\mathsf{F}$ is denoted by $\KP_\mathsf{F}(\Lambda)$ and is defined to be the universal associative $\mathsf{F}$-algebra generated by formal symbols $p_v,s_\lambda,s_{\lambda^*}$; $v\in \Lambda^0,\lambda\in \Lambda\setminus \Lambda^0$ subject to the following relations:

(KP1) $p_up_v=\delta_{u,v}p_u$ for all $u,v\in \Lambda^0$;

(KP2) $s_\lambda s_\mu=s_{\lambda\mu},$ $s_{\mu^*}s_{\lambda^*}=s_{(\lambda\mu)^*}$ for all $\lambda,\mu\in \Lambda^{\neq 0}$ with $s(\lambda)=r(\mu)$, and

$p_{r(\lambda)}s_\lambda=s_\lambda=s_\lambda p_{s(\lambda)},$ $p_{s(\lambda)}s_{\lambda^*}=s_{\lambda^*}=s_{\lambda^*}p_{r(\lambda)}$ for all $\lambda\in \Lambda^{\neq 0}$;

(KP3) $s_{\lambda^*} s_\mu=\delta_{\lambda,\mu}p_{s(\lambda)}$ for all $\lambda,\mu\in \Lambda^{\neq 0}$ with $d(\lambda)=d(\mu)$;

(KP4) $p_v=\displaystyle{\sum_{\lambda\in v\Lambda^\N}} s_\lambda s_{\lambda^*}$ for all $v\in \Lambda^0$ and for all $\N\in \mathbb{N}^k\setminus \{0\}$.
\end{dfn}

In this paper, for any $k$-graph $\Lambda$, $\KP(\Lambda)$ will denote the Kumjian--Pask algebra of $\Lambda$ over an arbitrary field $\mathsf{F}$. When $\Lambda=E^*$, the free category of a directed graph $E$, then $\KP(\Lambda)$ becomes the well-known Leavitt path algebra $L(E)$ of $E$. In this way, Kumjian--Pask algebras generalize Leavitt path algebras in higher dimensions. 

\subsection{$\Gamma$-monoids} 
Throughout this paper, all monoids are assumed to be commutative. 

Recall that any monoid $M$ is equipped with a natural preorder: \[x\le y\Leftrightarrow y=x+z~\text{for some}~z\in M.\] This is usually called the \emph{algebraic preorder} on $M$. For $x,y\in M$, we write $x~||~y$ if $x$ and $y$ are incomparable, i.e., if $x\nleq y$ and $y\nleq x$. If the monoid is conical and cancellative, then the algebraic preorder becomes a partial order. 

A monoid $M$ is said to have the \emph{refinement} property (or sometimes $M$ is a \emph{refinement} monoid) if whenever $a+b=c+d$ for some $a,b,c,d\in M$, then there are $z_1,z_2,z_3,z_4\in M$ such that \[a=z_1+z_2,~b=z_3+z_4,~c=z_1+z_3,~d=z_2+z_4.\]
We now recall the definition of a $\Gamma$-monoid. 
\begin{dfn}\label{def Gamma-monoid}
Let $\Gamma$ be a group. A monoid $M$ is called a \emph{$\Gamma$-monoid} if there is an action of $\Gamma$ on $M$ given by monoid automorphisms. More precisely, $M$ is a $\Gamma$-monoid if there is a group homomorphism $\eta:\Gamma\longrightarrow \Aut(M)$. For any $x\in M$ and $\gamma\in \Gamma$, $^\gamma x$ denotes $\eta(\gamma)(x)$.    
\end{dfn}
When $M$ is a $\Gamma$-monoid, the action of $\Gamma$ on $M$ naturally induces an action of the group semiring $\mathbb{N}[\Gamma]$ on $M$ as follows: \[^{\left(\displaystyle{\sum_{i=1}^{t}}~n_i\gamma_i\right)}x:=\displaystyle{\sum_{i=1}^{t}}~n_i~^{\gamma_i}x.\] for all $\displaystyle{\sum_{i=1}^{t}}~n_i\gamma_i\in \mathbb{N}[\Gamma]$. An element $\epsilon\in M$ is called an \emph{order unit} if for any $x\in M$, there exists $\alpha\in \mathbb{N}[\Gamma]$ such that $x\le~^\alpha \epsilon$. 

Let $\Gamma$ be a group and $M$ a $\Gamma$-monoid. A $\Gamma$-\emph{order ideal} of $M$ is defined as a submonoid $J$, which is a down-set with respect to the algebraic preorder on $M$ and is also closed under the action of $\Gamma$. 

For a subset $X$ of $M$, we denote by $\langle X\rangle$ the $\Gamma$-order ideal generated by $X$. To be precise, if $X\neq \emptyset$, then \[\langle X\rangle:=\left\{y\in M~|~y\le \displaystyle{\sum_{i=1}^{t}}~^{\alpha_i}x_i~\text{for some}~t\in \mathbb{N},\alpha_i\in \mathbb{N}[\Gamma]~\text{and}~x_i\in X\right\}\] and $\langle \emptyset\rangle:=\{0\}$. For any element $x\in M$, we write $\langle x \rangle$ to denote $\langle \{x\}\rangle$. 

The intersection of two $\Gamma$-order ideals $J_1,J_2$ of $M$ is again a $\Gamma$-order ideal of $M$. However, the union may not be a $\Gamma$-order ideal. The collection of all $\Gamma$-order ideals becomes a lattice under inclusion where the meet is the intersection and the join is the $\Gamma$-order ideal generated by the union. When $M$ is a refinement monoid, it can be shown that $\langle J_1\cup J_2\rangle=J_1+J_2$, where \[J_1+J_2:=\{x+y~|~x\in J_1~\text{and}~y\in J_2\}.\] 

A $\Gamma$-monoid $M$ is called \emph{decomposable} if there exist proper $\Gamma$-order ideals $J_1,J_2$ of $M$ such that $J_1\cap J_2=\{0\}$ and $\langle J_1\cup J_2\rangle=M$. If no such decomposition exists, then $M$ is called \emph{indecomposable}.    

Let $J$ be any $\Gamma$-order ideal of a $\Gamma$-monoid $M$. Consider the relation $\kappa_J$ on $M$ defined as follows: $(x,y)\in \kappa_J$ if and only if $x+a=y+b$ for some $a,b\in J$. Then it is easy to check that $\kappa_J$ is a congruence on $M$ (in fact, it is enough for $J$ to be a submonoid for $\kappa_J$ to be a congruence). We denote the quotient monoid $M/{\kappa_J}$ simply as $M/{J}$. This is also a $\Gamma$-monoid with respect to the induced action: $^\gamma[x]_J:=[^\gamma x]_J$, where $[x]_J$ denotes the equivalence class of $x\in M$ under the relation $\kappa_J$.

Let us finish this section with a special combinatorial $\Gamma$-monoid---the talented monoid of a higher-rank graph. 

\begin{dfn}\label{def talented monoid}
Let $\Lambda$ be a row-finite $k$-graph without sources. The \emph{talented monoid} of $\Lambda$ is denoted by $T_\Lambda$ and is defined as the free commutative monoid generated by the formal symbols $v(\N)$ (where $v\in \Lambda^0, \N\in \mathbb{Z}^k$) subject to the following relations:
\begin{equation}\label{TM equation}
    v(\N)=\displaystyle{\sum_{\alpha\in v\Lambda^{\M}}} s(\alpha)(\N+\M)
\end{equation}
for each $v\in \Lambda^0$, $\N\in \mathbb{Z}^k$ and $\M\in \mathbb{N}^k$.
\end{dfn}

The monoid $T_\Lambda$ is a conical refinement monoid (see \cite[Theorem 3.18]{HMPS1}) and is equipped with a natural action of the group $\mathbb{Z}^k$, defined on the generators as follows: $^\Q v(\N):=v(\N+\Q)$. This makes $T_\Lambda$ a $\mathbb{Z}^k$-monoid. If $|\Lambda^0|<\infty$, then $\epsilon_\Lambda:=\displaystyle{\sum_{v\in \Lambda^0}} v(0)$ is an order-unit of $T_\Lambda$. 

\section{The prime spectrum of a \texorpdfstring{$\Gamma$}{Gamma}-monoid}\label{sec prime spec of monoid}
We start with the definitions of some special types of $\Gamma$-order ideals of a $\Gamma$-monoid including prime $\Gamma$-order ideals, which are going to be our main objects of study. 

\begin{dfn}\label{def prime, primitive and maximal}
Let $\Gamma$ be a group and $M$ a $\Gamma$-monoid. Let $J$ be a proper $\Gamma$-order ideal. Then $J$ is called 

$(i)$ \emph{prime}, if for any two $\Gamma$-order ideals $J_1,J_2$ of $M$, $J_1\cap J_2\subseteq J$ implies $J_1\subseteq J$ or $J_2\subseteq J$,

$(ii)$ \emph{primitive}, if $J$ is prime and $\Gamma$ acts freely on the quotient $\Gamma$-monoid $M/{J}$, and 

$(iii)$ \emph{maximal}, if for any $\Gamma$-order ideal $I$ of $M$, $J\subseteq I\subseteq M$ implies $J=I$ or $I=M$. 
\end{dfn}   

The collection of all prime $\Gamma$-order ideals of $M$ is denoted by $\Spec_\Gamma(M)$ and is called the \emph{prime spectrum} of $M$. The intersection of all prime $\Gamma$-order ideals of $M$ is denoted by $\nil(M)$ and is called the \emph{nilradical} of $M$. 

Parallel to the prime spectrum of a commutative ring, one can impose a Zariski-like topology on $\Spec_\Gamma(M)$. For any $\Gamma$-order ideal $J$ of $M$, we write \[V(J):=\{P\in \Spec_\Gamma(M)~|~J\subseteq P\}.\] One can check that 

\begin{itemize}
\item $\emptyset=V(M)$, $\Spec_\Gamma(M)=V(\{0\})$;

\item $\displaystyle{\bigcup_{i=1}^{n}}~V(J_i)=V\left(\displaystyle{\bigcap_{i=1}^{n}}J_i\right)$;

\item $\displaystyle{\bigcap_{\nu\in \Delta}} V(J_\nu)=V\left(\langle \displaystyle{\bigcup_{\nu\in \Delta}}J_\nu\rangle \right)$ for any index set $\Delta$.
\end{itemize}

Hence the sets $V(J)$ satisfy the axioms to be the closed sets of a topology on $\Spec_\Gamma(M)$, which we call the \emph{Zariski topology} on $\Spec_\Gamma(M)$. One can also trace this topology by identifying the open sets. For each $x\in M$, let \[D(x):=\{P\in \Spec_\Gamma(M)~|~x\notin P\}.\] Then $D(x)=\Spec_\Gamma(M)\setminus V(\langle x\rangle)$, and so $D(x)$ is open in the Zariski topology. Moreover, for any $\Gamma$-order ideal $J$ of $M$, $\Spec_\Gamma(M)\setminus V(J)=\displaystyle{\bigcup_{x\in J}}~D(x)$ which shows that the collection $\{D(x)~|~x\in M\}$ forms a basis for the Zariski topology on $\Spec_\Gamma(M)$. Defining \[D(J):=\{P\in \Spec_\Gamma(M)~|~J\nsubseteq P\},\] it follows that $D(J)=\displaystyle{\bigcup_{x\in J}}~D(x)$. 

For a $\Gamma$-order ideal $J$ of a $\Gamma$-monoid $M$, the \emph{radical} of $J$ is denoted by $\rad(J)$ and is defined as $\rad(J):=\displaystyle{\bigcap_{P\in V(J)}}P$. Note that $\rad(\{0\})=\nil(M)$. 

We denote the collection of all primitive $\Gamma$-order ideals by $\Prim_\Gamma(M)$. We regard this as a topological space with respect to the subspace topology inherited from the Zariski topology of $\Spec_\Gamma(M)$. 

\begin{lem}\label{lem maximal order ideal is prime}
Let $\Gamma$ be a group and $M$ a $\Gamma$-monoid with the refinement property. Then every maximal $\Gamma$-order ideal of $M$ is prime.    
\end{lem}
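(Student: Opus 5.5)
Let $J$ be a maximal $\Gamma$-order ideal and suppose $J_1\cap J_2\subseteq J$ with $J_1\nsubseteq J$ and $J_2\nsubseteq J$; we aim for a contradiction. The plan is to use maximality to write $M$ as a join and then use refinement to distribute the intersection over the join. Since $J\subsetneq \langle J\cup J_1\rangle$, maximality gives $\langle J\cup J_1\rangle=M$, and likewise $\langle J\cup J_2\rangle=M$. By the remark in the preliminaries, refinement gives $\langle J\cup J_i\rangle=J+J_i$, so $M=J+J_1=J+J_2$.

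The key step is the distributivity
\[
(J+J_1)\cap(J+J_2)\subseteq J+(J_1\cap J_2),
\]
which I will prove directly using refinement. Let $x\in (J+J_1)\cap (J+J_2)$, so $x=a+b=c+d$ with $a,c\in J$, $b\in J_1$, $d\in J_2$. Refinement gives $z_1,\dots,z_4$ with $a=z_1+z_2$, $b=z_3+z_4$, $c=z_1+z_3$, $d=z_2+z_4$. Then $z_1,z_2,z_3\le a$ or $c$, so they lie in $J$ because $J$ is a down-set. Also $z_4\le b$ and $z_4\le d$, so $z_4\in J_1\cap J_2$. Hence $x=(z_1+z_2+z_3)+z_4\in J+(J_1\cap J_2)$.

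Combining the two steps gives $M=M\cap M=(J+J_1)\cap(J+J_2)\subseteq J+(J_1\cap J_2)\subseteq J+J=J$, since $J$ is a submonoid. This contradicts properness of $J$, so $J$ is prime.

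The main obstacle is that the identity $\langle J_1\cup J_2\rangle = J_1+J_2$ is only asserted in the preliminaries and not proved. If it must be justified, I would show that $J+J_i$ is a $\Gamma$-order ideal. It is clearly a $\Gamma$-stable submonoid. For the down-set property, suppose $y\le a+b$ with $a\in J$, $b\in J_i$, say $y+w=a+b$. Refinement splits $y=z_1+z_2$ with $z_1\le a$ and $z_2\le b$, so $y\in J+J_i$. No other difficulty is expected.
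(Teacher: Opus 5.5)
Your proof is correct, but it is organized differently from the paper's. You argue by contradiction from both non-containments and use maximality twice to get $M=J+J_1=J+J_2$. For this you need $J+J_i$ to already be a $\Gamma$-order ideal, and your refinement argument supplies exactly that. You then prove the distributive inclusion $(J+J_1)\cap(J+J_2)\subseteq J+(J_1\cap J_2)$ by refining $a+b=c+d$. This is precisely the decomposition the paper uses one step later, in the proof of Lemma~\ref{lem proper ideal contained in a prime ideal}.

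The paper's proof of the present lemma is instead direct and one-sided. Assuming only $J_1\nsubseteq J$, it picks one $x\in J_1\setminus J$ and uses maximality once to get $\langle J\cup\{x\}\rangle=M$, so each $y\in J_2$ satisfies $y\le a+{}^\gamma x$ with $a\in J$. Refining $a+{}^\gamma x=y+z$ gives $y=z_1+z_3$ with $z_1\in J$ and $z_3\in J_1\cap J_2\subseteq J$, whence $J_2\subseteq J$. In effect it checks the other form of distributivity, $J_2\cap(J+J_1)\subseteq J+(J_1\cap J_2)$, elementwise.

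The paper's route is a bit more economical: one use of maximality, and no need to identify $\langle J\cup J_i\rangle$ with $J+J_i$. Yours isolates a cleaner structural fact. Under refinement the lattice of $\Gamma$-order ideals is distributive, and a maximal proper element of a distributive lattice is automatically meet-prime; this makes the role of the refinement hypothesis transparent.
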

\begin{proof}
Let $J$ be a maximal $\Gamma$-order ideal of $M$. Suppose $J_1,J_2$ are $\Gamma$-order ideals with $J_1\cap J_2\subseteq J$. Suppose $J_1\nsubseteq J$. We will show that $J_2\subseteq J$. Choose any $x\in J_1\setminus J$ and let $I:=\langle J\cup \{x\}\rangle$. Then $I$ is a $\Gamma$-order ideal of $M$ and $J\subsetneq I$ since $x\in I\setminus J$. By maximality of $J$, $I=M$. Let $y\in J_2$ be an arbitrary element. Then $y\in I$. So there are $a\in J$ and $\gamma\in \mathbb{N}[\Gamma]$ such that $y\le a+~^\gamma x$. Then we can write $a+~^\gamma x=y+z$ for some $z\in M$. Since $M$ is a refinement monoid, there exist $z_1,z_2,z_3,z_4$ such that $a=z_1+z_2$, $^\gamma x=z_3+z_4$ and $y=z_1+z_3$, $z=z_2+z_4$. Now $z_1\le a\in J$, which implies $z_1\in J$ since $J$ is an order ideal. Since $J_2$ is an order ideal and $z_3\le y\in J_2$, $z_3\in J_2$. Again since $z_3\le~^\gamma x$, $x\in J_1$ and $J_1$ is a $\Gamma$-order ideal, it follows that $z_3\in J_1$. Hence, $z_3\in J_1\cap J_2\subseteq J$ and consequently, $y=z_1+z_3\in J$ since $J$ is a submonoid. Since $y$ was chosen arbitrarily from $J_2$, this shows that $J_2\subseteq J$ and thus finishing the proof. 
\end{proof}

Recall that an element $\epsilon$ in a $\Gamma$-monoid $M$ is called an \emph{order unit}
if for any $x\in M$, there exists $\alpha\in \mathbb{N}[\Gamma]$ such that $x\le~^\alpha \epsilon$. Suppose $M$ is a $\Gamma$-monoid containing an order unit $\epsilon$. Let $J$ be any proper $\Gamma$-order ideal of $M$. Suppose $\mathscr{F}_J$ is the collection of all proper $\Gamma$-order ideals of $M$ containing $J$. Then $\mathscr{F}_J$ is partially ordered by inclusion. Let $C=\{J_\nu~|~\nu\in \Delta\}$ be a chain in $\mathscr{F}_J$. Let $I=\displaystyle{\bigcup_{\nu\in \Delta}} J_\nu$. If $\epsilon\in I$, then $\epsilon\in J_\nu$ for some $\nu\in \Delta$. But that would imply $x\in J_{\nu}$ for all $x\in M$ and hence $J_{\nu}=M$ since $\epsilon$ is an order unit. Therefore, $\epsilon\notin I$ and so $I\in \mathscr{F}_J$ and $J_\nu\subseteq I$ for all $\nu\in \Delta$. Thus, we can apply Zorn's lemma to conclude that $\mathscr{F}_J$ has a maximal element. Therefore, every proper $\Gamma$-order ideal is contained in a maximal $\Gamma$-order ideal, in particular, any nonzero conical $\Gamma$-monoid with an order unit contains a maximal $\Gamma$-order ideal, which is also prime if the monoid is refinement.

Although we need order units to guarantee the existence of maximal $\Gamma$-order ideals, the next lemma says that we do not need them for the existence of prime $\Gamma$-order ideals. 
\begin{lem}\label{lem proper ideal contained in a prime ideal}
Let $\Gamma$ be a group and $M$ a $\Gamma$-monoid with the refinement property. Then for every proper $\Gamma$-order ideal $J$ of $M$ and $x\notin J$, there is a prime $\Gamma$-order ideal $P$ such that $J\subseteq P$ and $x\notin P$. In particular, if $M$ is conical and $M\neq \{0\}$, then $\Spec_\Gamma(M)\neq \emptyset$.    
\end{lem}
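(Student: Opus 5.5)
Fix a proper $\Gamma$-order ideal $J$ and $x\notin J$. The plan is a Zorn's lemma argument modelled on the classical proof that an ideal maximal with respect to avoiding a multiplicative set is prime. Let
\[\mathscr{G}:=\{I~|~I~\text{is a}~\Gamma\text{-order ideal of}~M,~J\subseteq I,~x\notin I\}.\]
This family is nonempty since $J\in\mathscr{G}$. It is partially ordered by inclusion, and the union of a chain in $\mathscr{G}$ is again in $\mathscr{G}$. Indeed, a union of a chain of submonoids that are down-sets and $\Gamma$-invariant is again such a set: any two elements lie in a common member of the chain. The union also clearly omits $x$. By Zorn's lemma, $\mathscr{G}$ has a maximal element $P$. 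Then $P$ is proper because $x\notin P$, and $J\subseteq P$.

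The key step is to show that $P$ is prime. Suppose $J_1\cap J_2\subseteq P$ but $J_1\nsubseteq P$ and $J_2\nsubseteq P$. Put $I_i:=\langle P\cup J_i\rangle$. Since $M$ is a refinement monoid, $I_i=P+J_i$, as recalled in the preliminaries. Each $I_i$ strictly contains $P$, so by maximality $x\in I_i$ for $i=1,2$.

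It therefore suffices to prove the distributive-type inclusion
\[(P+J_1)\cap(P+J_2)\subseteq P+(J_1\cap J_2).\]
Given this, $x\in P+(J_1\cap J_2)\subseteq P$, which is a contradiction.

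To prove the inclusion, take $y=a_1+b_1=a_2+b_2$ with $a_i\in P$ and $b_i\in J_i$. Refinement gives $z_1,\dots,z_4$ with
\[a_1=z_1+z_2,\quad b_1=z_3+z_4,\quad a_2=z_1+z_3,\quad b_2=z_2+z_4.\]
Then $z_1,z_2,z_3\le$ some element of $P$, so $z_1,z_2,z_3\in P$ because $P$ is a down-set. Moreover $z_4\le b_1\in J_1$ and $z_4\le b_2\in J_2$, so $z_4\in J_1\cap J_2$. Hence $y=(z_1+z_2+z_3)+z_4\in P+(J_1\cap J_2)$.

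This refinement computation is the only real obstacle; it mirrors the proof of Lemma \ref{lem maximal order ideal is prime}. One must also make sure that $P+J_i$ is really a $\Gamma$-order ideal. For this I will quote the stated fact $\langle J_1\cup J_2\rangle=J_1+J_2$ for refinement monoids.

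For the final assertion, let $M$ be conical with $M\neq\{0\}$. Then $\{0\}$ is a $\Gamma$-order ideal: it is a down-set because $y\le 0$ means $0=y+z$, and conicality forces $y=0$; it is $\Gamma$-invariant since $\Gamma$ acts by automorphisms. Choosing any $x\neq 0$ and applying the first part with $J=\{0\}$ produces a prime $P$ with $x\notin P$. In particular $\Spec_\Gamma(M)\neq\emptyset$.
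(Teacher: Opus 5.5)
Your proposal is correct and is essentially the paper's argument: Zorn's lemma on the $\Gamma$-order ideals containing $J$ and avoiding $x$, followed by a refinement of $x=a_1+b_1=a_2+b_2$ to force $x$ into the maximal element. The only cosmetic differences are that you enlarge $P$ by all of $J_i$ instead of by $\langle y_i\rangle$ for a chosen $y_i\in J_i\setminus P$, and that you package the refinement step as the inclusion $(P+J_1)\cap(P+J_2)\subseteq P+(J_1\cap J_2)$.
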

\begin{proof}
Let $J$ be any proper $\Gamma$-order ideal of $M$ and $x\notin J$. We consider the collection \[\mathfrak{J}:=\{I~|~I~\text{is a}~\Gamma\text{-order ideal such that}~J\subseteq I~\text{and}~x\notin I\}.\] Since $J\in \mathfrak{J}$, $\mathfrak{J}$ is a nonempty partially ordered set ordered by inclusion. If $C=\{I_\nu~|~\nu\in \Delta\}$ is a chain in $\mathfrak{J}$, then it is clear that $\displaystyle{\bigcup_{\nu\in \Delta}}I_\nu$ is an upper bound for $C$. By Zorn's lemma, we have a maximal element $I$ in $\mathfrak{J}$. We now show that $I$ is a prime $\Gamma$-order ideal of $M$. If possible, suppose there are $\Gamma$-order ideals $J_1,J_2$ such that $J_1\cap J_2\subseteq I$ but $J_1\nsubseteq I$ and $J_2\nsubseteq I$. For $i=1,2$, pick elements $y_i\in J_i\setminus I$ and consider the $\Gamma$-order ideals $I+\langle y_i\rangle$. Since both these ideals contain $I$ properly, by maximality of $I$ in $\mathfrak{J}$, we should have $x\in I+\langle y_i\rangle$ for $i=1,2$. Then there exist $a,b\in I$, $u_i\in \langle y_i\rangle$, $i=1,2$ such that $x=a+u_1=b+u_2$. Applying the refinement property, we have $w_1,w_2,w_3,w_4\in M$ such that \[a=w_1+w_2,~~u_1=w_3+w_4,~~b=w_1+w_3,~~u_2=w_2+w_4.\] Then for $i=1,2$, $w_4\le u_i\in \langle y_i\rangle\subseteq J_i$, whence $w_4\in J_1\cap J_2\subseteq I$. Also since $w_3\le b\in I$, $w_3\in I$. These imply that $x=a+u_1=a+w_3+w_4\in I$. But this is a contradiction since $I\in \mathfrak{J}$. Therefore, $I$ is a prime $\Gamma$-order ideal of $M$ containing $J$ and not containing $x$. The remaining statement is evident by considering the trivial $\Gamma$-order ideal $\{0\}$. 
\end{proof}
We now record an interesting consequence of Lemma \ref{lem proper ideal contained in a prime ideal}.
\begin{prop}\label{pro free action on prime quotient is enough}
Let $\Gamma$ be a group, $M$ a $\Gamma$-monoid with the refinement property. Then the following are equivalent.

$(i)$ Every prime $\Gamma$-order ideal is primitive, i.e., $\Gamma$ acts freely on the quotient $M/{P}$ for any $P\in \Spec_\Gamma(M)$.

$(ii)$ $\Gamma$ acts freely on $M/{J}$ for any proper $\Gamma$-order ideal $J$ of $M$.
\end{prop}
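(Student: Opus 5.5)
The implication $(ii)\Rightarrow(i)$ is immediate, since a prime $\Gamma$-order ideal is proper by definition. So the work is in $(i)\Rightarrow(ii)$. Here we argue by contraposition using Lemma \ref{lem proper ideal contained in a prime ideal}.

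Suppose $J$ is a proper $\Gamma$-order ideal and $\Gamma$ does not act freely on $M/J$. Then there are $\gamma\neq e$ in $\Gamma$ and $x\in M$ with $[x]_J\neq [0]_J$ and ${}^\gamma[x]_J=[x]_J$. Concretely, $x\notin J$ and ${}^\gamma x+a=x+b$ for some $a,b\in J$. The first fact holds because $[x]_J=[0]_J$ means $x+a=b$ with $a,b\in J$, and then $x\le b$ forces $x\in J$ since $J$ is a down-set; conversely $x\in J$ gives $[x]_J=[0]_J$. So "$x$ is a nonzero fixed class" just means $x\notin J$ together with the congruence relation above. (Here freeness is read as: ${}^\gamma z=z$ with $\gamma\neq e$ forces $z=0$.)

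Apply Lemma \ref{lem proper ideal contained in a prime ideal} to $J$ and $x\notin J$. This gives a prime $\Gamma$-order ideal $P$ with $J\subseteq P$ and $x\notin P$. Since $a,b\in J\subseteq P$, the same relation ${}^\gamma x+a=x+b$ shows $({}^\gamma x,x)\in\kappa_P$, so ${}^\gamma[x]_P=[x]_P$. Moreover $[x]_P\neq[0]_P$ because $x\notin P$, by the down-set argument again. Hence $\Gamma$ does not act freely on $M/P$, so $P$ is a prime that is not primitive, contradicting $(i)$.

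There is no real obstacle: the only inputs are the relation $\kappa_J\subseteq\kappa_P$ for $J\subseteq P$ and the characterization $[x]_I=[0]_I\iff x\in I$ for an order ideal $I$. The essential ingredient, which uses refinement, is the separation property in Lemma \ref{lem proper ideal contained in a prime ideal}; it lets us enlarge $J$ to a prime while keeping $x$ outside.
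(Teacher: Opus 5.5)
Your proposal is correct and follows essentially the same argument as the paper: apply Lemma~\ref{lem proper ideal contained in a prime ideal} to enlarge $J$ to a prime $P$ with $x\notin P$, transfer the relation ${}^\gamma x+a=x+b$ (with $a,b\in J\subseteq P$) to $\kappa_P$, and use the down-set property to get $[x]_P\neq[0]_P$. The only difference is that you argue by contraposition, whereas the paper concludes directly that $\gamma=e_\Gamma$.
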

\begin{proof}
$(ii)\Rightarrow (i)$ This is obvious.

$(i)\Rightarrow (ii)$ Let $J$ be any proper $\Gamma$-order ideal of $M$. Suppose $[0]_J\neq [x]_J\in M/{J}$ and $\gamma\in \Gamma$ are such that $^\gamma [x]_J=[x]_J$. We show that $\gamma=e_\Gamma$, where $e_\Gamma$ is the identity of $\Gamma$. Since $[x]_J\neq [0]_J$, $x\notin J$ and so by Lemma \ref{lem proper ideal contained in a prime ideal}, there is a prime $\Gamma$-order ideal $P$ of $M$ such that $J\subseteq P$ and $x\notin P$. Now $^\gamma [x]_J=[x]_J$ in $M/{J}$ implies $^\gamma x+a=x+b$ in $M$ for some $a,b\in J$. Then $a,b\in P$ and we have $^\gamma [x]_P=[x]_P$ in $M/{P}$. If $[x]_P=[0]_P$ in $M/{P}$, then $x+y=z$ for some $y,z\in P$. But this implies that $x\in P$ since $P$ is an order ideal. Thus, $[x]_P\neq [0]_P$ in $M/{P}$. Since $P$ is primitive, $\Gamma$ acts freely on $M/{P}$, whence $\gamma=e_\Gamma$. Hence the result. 
\end{proof}

We now derive the following list of results, most of which are well-known for the prime spectrum of a commutative ring with identity. This shows that prime $\Gamma$-order ideals in a \emph{nice} $\Gamma$-monoid behave much like the prime ideals of a commutative unital ring. 

\begin{prop}\label{pro basic facts about basic open sets}
Let $\Gamma$ be a group and $M$ a conical $\Gamma$-monoid such that $M\neq \{0\}$. Then the following hold.

$(i)$ $D(x)=\emptyset$ if and only if $x\in \nil(M)$.

$(ii)$ $D(x)\cap D(y)=D(\langle x\rangle \cap \langle y\rangle)$ for $x,y\in M$.

$(iii)$ $\overline{\{P\}}=V(P)$ for any $P\in \Spec_\Gamma(M)$.

$(iv)$ $\Spec_\Gamma(M)$ is $T_0$.

$(v)$ $V(J)=V(\rad(J))$ for any $\Gamma$-order ideal $J$ of $M$.

$(vi)$ For any $\Gamma$-order ideal $J$, $\rad(J)$ is prime if and only if $V(J)$ is a nonempty irreducible subset.

$(vii)$ $\Spec_\Gamma(M)$ is sober, i.e., every nonempty irreducible set has a unique generic point.

$(viii)$ $\Spec_\Gamma(M)$ is spectral in the sense of Hofmann--Keimel.

Furthermore, if $M$ is a refinement monoid, then 

$(1)$ $D(x)=\Spec_\Gamma(M)$ if and only if $x$ is an order unit in $M$;

$(2)$ for any $P\in \Spec_\Gamma(M)$, $\{P\}$ is closed if and only if $P$ is maximal;

$(3)$ $J=\rad(J)$ for any $\Gamma$-order ideal of $M$, in particular, $\nil(M)=\{0\}$;

$(4)$ $D(x)=\emptyset$ if and only if $x=0$;

$(5)$ $D(x)$ is compact for all $x\in M$, in particular, $\Spec_\Gamma(M)$ is compact. 

If, in addition, each $\Gamma$-order ideal of $M$ is finitely generated, then $\Spec_\Gamma(M)$ is spectral in the sense of Hochster. 
\end{prop}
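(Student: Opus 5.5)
We prove the items in order, working throughout with the correspondence $J\mapsto V(J)$ and with primality in its lattice form: $J_1\cap J_2\subseteq P$ forces $J_1\subseteq P$ or $J_2\subseteq P$.

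\emph{The items $(i)$--$(viii)$.} Item $(i)$ is immediate: $D(x)=\emptyset$ exactly when $x$ lies in every prime, i.e.\ $x\in\nil(M)$. For $(ii)$, a prime $P$ satisfies $x\notin P$ and $y\notin P$ iff $\langle x\rangle\nsubseteq P$ and $\langle y\rangle\nsubseteq P$, iff (by primality) $\langle x\rangle\cap\langle y\rangle\nsubseteq P$. For $(iii)$, the closure of $\{P\}$ is the intersection of all $V(J)$ with $J\subseteq P$; the smallest of these is $V(P)$. Then $(iv)$ follows, since $V(P)=V(Q)$ gives $P\subseteq Q\subseteq P$. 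Item $(v)$ is tautological from the definition of $\rad(J)$.

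For $(vi)$, first suppose $\rad(J)$ is prime. Then by $(v)$ and $(iii)$, $V(J)=V(\rad J)=\overline{\{\rad J\}}$, which is the closure of a point and hence nonempty and irreducible. Conversely, suppose $V(J)\neq\emptyset$ is irreducible. Then $\rad(J)$ is proper. If $J_1\cap J_2\subseteq\rad(J)$, then
\[V(J)\subseteq V(J_1\cap J_2)=V(J_1)\cup V(J_2),\]
so irreducibility gives, say, $V(J)\subseteq V(J_1)$. This means $J_1\subseteq\rad(J)$, so $\rad(J)$ is prime.

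For $(vii)$, take a nonempty irreducible set $Y$. Its closure is irreducible and closed, so it equals $V(J)$ for some $J$. By $(vi)$, $\rad(J)$ is prime and $\overline{Y}=\overline{\{\rad J\}}$, so $\rad(J)$ is a generic point; uniqueness comes from $T_0$. Item $(viii)$ is just $(iv)$ together with $(vii)$.

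\emph{The refinement items $(1)$--$(5)$.} Lemma \ref{lem proper ideal contained in a prime ideal} does the work here.
\begin{itemize}
\item[$(3)$] If $x\notin J$, the lemma gives a prime $P\supseteq J$ with $x\notin P$, so $x\notin\rad(J)$. Hence $\rad(J)=J$, and in particular $\nil(M)=\rad(\{0\})=\{0\}$.
\item[$(4)$] This follows from $(i)$ and $(3)$.
\item[$(1)$] Note that $x$ is an order unit iff $\langle x\rangle=M$. If $\langle x\rangle=M$, no proper ideal contains $x$, so $D(x)=\Spec_\Gamma(M)$. If $\langle x\rangle\neq M$, pick $y\notin\langle x\rangle$; the lemma gives a prime $P\supseteq\langle x\rangle$, so $x\in P$ and $D(x)\neq\Spec_\Gamma(M)$.
\item[$(2)$] By $(iii)$, $\{P\}$ is closed iff $V(P)=\{P\}$. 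If $P$ is maximal, every prime containing $P$ equals $P$. If $P$ is not maximal, take a proper $I\supsetneq P$ and some $y\notin I$; the lemma yields a prime $Q\supseteq I$, and $Q\in V(P)$ with $Q\neq P$.
\item[$(5)$] Suppose $D(x)\subseteq\bigcup_\nu D(J_\nu)=D(\langle\bigcup_\nu J_\nu\rangle)$. Then $V(\langle\bigcup_\nu J_\nu\rangle)\subseteq V(\langle x\rangle)$, so $x\in\rad\langle\bigcup_\nu J_\nu\rangle=\langle\bigcup_\nu J_\nu\rangle$ by $(3)$. Hence $x\le\sum_{i=1}^t{}^{\alpha_i}x_i$ with each $x_i$ in some $J_{\nu_i}$, and the finitely many $D(J_{\nu_i})$ cover $D(x)$. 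Compactness of $\Spec_\Gamma(M)$ then comes from $D(x)$ with $x=\sum_{i=1}^n x_i$ over a finite cover by basic sets $D(x_i)$, or from a direct application of the same argument.
\end{itemize}

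\emph{The Hochster statement.} Assume every $\Gamma$-order ideal is finitely generated, say $J=\langle x_1,\dots,x_n\rangle$. Then $J=\langle x_1+\cdots+x_n\rangle$, since each $x_i\le\sum_j x_j$ and conversely the sum lies in $J$. Hence every open set is of the form $D(x)$, and is compact by $(5)$; in particular the whole space is compact. The basis $\{D(x)\}$ is closed under finite intersections by $(ii)$, because $\langle x\rangle\cap\langle y\rangle$ is finitely generated and hence principal, say $=\langle z\rangle$, so $D(x)\cap D(y)=D(z)$. Combined with $(viii)$, this gives spectrality in the sense of Hochster.

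The step I expect to need the most care is $(5)$. The finite-subcover argument relies on $(3)$ to pass from a containment of closed sets back to a containment of ideals, and that passage is valid only because of the refinement property.
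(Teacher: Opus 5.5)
\textbf{Verdict: there is a genuine gap, in one clause.} Apart from that clause, your proof essentially follows the paper's argument item by item. In $(vii)$ you are more careful than the paper, which treats an arbitrary irreducible set as if it were already of the form $V(J)$; passing to $\overline{Y}$ is the right fix.

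The gap is the last clause of $(5)$, that $\Spec_\Gamma(M)$ itself is compact. Your first justification is circular: it starts from ``a finite cover by basic sets $D(x_i)$'', which is exactly what has to be produced. Your second (``a direct application of the same argument'') also fails. From a cover $\Spec_\Gamma(M)=\bigcup_\nu D(J_\nu)$ you only get $V(\langle\bigcup_\nu J_\nu\rangle)=\emptyset$, hence $\langle\bigcup_\nu J_\nu\rangle=M$ by $(3)$. To pass to finitely many $J_\nu$ you need one element $x$ with $\langle x\rangle=M$, i.e.\ $\Spec_\Gamma(M)=D(x)$, i.e.\ by $(1)$ an order unit.

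The paper closes this step by writing $\Spec_\Gamma(M)=D(\epsilon)$ for an order unit $\epsilon$, a hypothesis that does not appear in the statement. It cannot be dropped. A prime $\Gamma$-order ideal contains a sum iff it contains each summand, so $D(x_1)\cup\cdots\cup D(x_n)=D(x_1+\cdots+x_n)$. Combined with $(1)$, this shows that for conical refinement $M\neq\{0\}$, $\Spec_\Gamma(M)$ is compact if and only if $M$ has an order unit. For example, take the free commutative monoid $M=\mathbb{N}^{(\mathbb{N})}$ on $e_0,e_1,\dots$ with trivial $\Gamma$-action. It is conical and refinement, its prime $\Gamma$-order ideals are exactly $P_i=\{x\in M~|~x_i=0\}$, and $D(e_i)=\{P_i\}$. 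So $\Spec_\Gamma(M)$ is infinite and discrete, hence not compact. You should therefore state and use an order-unit assumption for this clause, as the paper's proof tacitly does.

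Your Hochster paragraph is not affected. Finite generation applied to $M$ itself gives $M=\langle x\rangle$ for a single $x$, which is an order unit, so $\Spec_\Gamma(M)=D(x)$ is compact by the (correct) first part of $(5)$. This is a cleaner way to obtain global compactness than the paper's, which again appeals to $D(\epsilon)$ without producing $\epsilon$.
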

\begin{proof}
$(i)$ This is obvious from the definition of nilradical of $M$. 

$(ii)$ This follows from the definition of a prime $\Gamma$-order ideal.

$(iii)$ Obviously, $V(P)$ is a closed set containing $P$. Now if $V(J)$ is any closed set containing $P$, then $J\subseteq P$, whence $V(P)\subseteq V(J)$. Thus, $V(P)=\overline{\{P\}}$.

$(iv)$ Let $P,Q\in \Spec_\Gamma(M)$ be such that $P\neq Q$. Then $V(P)\neq V(Q)$ which implies $\overline{\{P\}}\neq \overline{\{Q\}}$ by $(iii)$. So $\Spec_\Gamma(M)$ is $T_0$. 

$(v)$ If $P\in V(J)$, then $\rad(J)=\displaystyle{\bigcap_{Q\in V(J)}}Q\subseteq P$. Hence $P\in V(\rad(J))$. Since $J\subseteq \rad(J)$, the other inclusion is obvious. 

$(vi)$ Assume that $V(J)$ is a nonempty irreducible subset. Suppose $J_1$, $J_2$ are $\Gamma$-order ideals such that $J_1\cap J_2\subseteq \rad(J)$. Let $P\in V(J)$. Then $J_1\cap J_2\subseteq \rad(J)\subseteq P$. By primeness of $P$, $J_1\subseteq P$ or $J_2\subseteq P$, whence $P\in V(J_1)\cup V(J_2)$. Therefore, $V(J)\subseteq V(J_1)\cup V(J_2)$. Since $V(J)$ is irreducible, $V(J)\subseteq V(J_1)$ or $V(J)\subseteq V(J_2)$. If $V(J)\subseteq V(J_1)$, then $J_1\subseteq P$ for all $P\in V(J)$, whence $J_1\subseteq \rad(J)$. Similarly, if $V(J)\subseteq V(J_2)$, then $J_2\subseteq \rad(J)$. Hence, $\rad(J)$ is a prime $\Gamma$-order ideal. Now suppose $\rad(J)$ is prime. Then $\rad(J)\neq M$ and so $V(J)\neq \emptyset$. To show that $V(J)$ is irreducible, let $I_1,I_2$ be $\Gamma$-order ideals such that $V(J)=V(I_1)\cup V(I_2)$. Then $V(J)=V(I_1\cap I_2)$. Since $\rad(J)$ is a prime $\Gamma$-order ideal containing $J$, $I_1\cap I_2\subseteq \rad(J)$. The primeness of $\rad(J)$ now implies that $I_1\subseteq \rad(J)$ or $I_2\subseteq \rad(J)$. If the first one happens, then for any $P\in V(J)$, $I_1\subseteq P$ and so $P\in V(I_1)$. Hence, $V(J)=V(I_1)$. Similarly, if $I_2\subseteq \rad(J)$, then $V(J)=V(I_2)$. This shows that $V(J)$ is irreducible.  

$(vii)$ Let $X$ be any nonempty irreducible subset of $\Spec_\Gamma(M)$. Then $X=V(J)$ for some $\Gamma$-order ideal $J$ of $M$ such that $\rad(J)$ is prime. Now using $(iii)$ and $(v)$, we have $V(J)=V(\rad(J))=\overline{\{J\}}$, showing that $\rad(J)$ is a generic point for $V(J)$. The uniqueness of the generic points is clear since for any two prime $\Gamma$-order ideals $P, Q$, $V (P ) = V (Q)$ implies $P = Q$.

$(viii)$ This follows from $(iv)$ and $(vii)$. 

Now we assume that $M$ has the refinement property.

$(1)$ The `if' direction is evident as no prime $\Gamma$-order ideal can contain an order unit. Now assume that $D(x)=\Spec_\Gamma(M)$. This implies $\langle x\rangle \nsubseteq P$ for every prime $\Gamma$-order ideal $P$. Then $\langle x\rangle=M$, otherwise if it is a proper $\Gamma$-order ideal, then it would be contained in a prime $\Gamma$-order ideal of $M$ by Lemma \ref{lem proper ideal contained in a prime ideal}. It follows that for any $a\in M$, there is some $\alpha\in \mathbb{N}[\Gamma]$ such that $a\le~^{\alpha}x$. Hence, $x$ is an order unit of $M$.

$(2)$ If $P$ is a maximal $\Gamma$-order ideal, then it is prime by Lemma \ref{lem maximal order ideal is prime} and it is clear that $V(P)=\{P\}$. Therefore, $\{P\}$ is closed. Conversely, if $\{P\}$ is closed, then $\{P\}=V(P)$ by $(iii)$. Let $P\nsubseteq I\subseteq M$ be a $\Gamma$-order ideal. If $I\neq M$, then by Lemma \ref{lem proper ideal contained in a prime ideal}, there is a prime $\Gamma$-order ideal $K$ of $M$ such that $I\subseteq K$. But then $K\in V(P)$ and $K\neq P$ which is a contradiction. Therefore, $I=M$ proving that $P$ is maximal. 

$(3)$ Let $J$ be any $\Gamma$-order ideal of $M$. The inclusion $J\subseteq \rad(J)$ is obvious. Since $\rad(M)=M$, we may assume that $J$ is proper. But then the result follows from Lemma \ref{lem proper ideal contained in a prime ideal}. Since $\rad(\{0\})=\nil(M)$, it follows that $\nil(M)=\{0\}$. 

$(4)$ Follows from $(i)$ and $(3)$.

$(5)$ Let $x\in M$. Since the sets $D(y)$, $y\in M$ form a basis for the Zariski topology on $\Spec_\Gamma(M)$, it suffices to check that any open cover of $D(x)$ consisting of the basic open sets of the form $D(y)$ has a finite subcover. So let $\Delta$ be an index set and $\mathcal{U}=\{D(x_\nu)~|~\nu\in \Delta,x_\nu\in M\}$ an open cover of $D(x)$. Let $J=\langle \{x_\nu~|~\nu\in \Delta\}\rangle$. Let $P\in V(J)$. If $x\notin P$, then $P\in D(x)$ and consequently, $P\in D(x_\nu)$ for some $\nu\in \Delta$. But then $J\nsubseteq P$ which is not the case. Thus $x\in P$ for all $P\in V(J)$ and so $x\in \rad(J)=J$ by $(3)$. It follows that $x\le \displaystyle{\sum_{i=1}^{t}}~^{\gamma_i}x_{\nu_i}$ for some $t\in \mathbb{N}$ and $\gamma_i\in \mathbb{N}[\Gamma]$; $i=1,2,\ldots,t$. Now for any $Q\in D(x)$, we must have $j\in \{1,2,\ldots,t\}$ such that $x_{\nu_j}\notin Q$, i.e., $Q\in D(x_{\nu_j})$. Therefore, $D(x)\subseteq \displaystyle{\bigcup_{i=1}^{t}}~D(x_{\nu_i})$ and we have extracted a finite subcover for $D(x)$ from $\mathcal{G}$. Since $\Spec_\Gamma(M)=D(\epsilon)$, $\Spec_\Gamma(M)$ is also compact.

For the final statement, assume that every $\Gamma$-order ideal of $M$ is finitely generated. Note that $\Spec_\Gamma(M)$ is already spectral in the sense of Hofmann--Keimel. It is compact by $(5)$. Also, by the same part, each member of the base $\mathcal{B}=\{D(x)~|~x\in M\}$ is compact. Let $x,y\in M$. Then by $(ii)$, $D(x)\cap D(y)=D\left(\langle x\rangle \cap \langle y\rangle\right)$. By the assumption, there exist $n\in \mathbb{N}$ and $z_i\in \langle x\rangle \cap \langle y\rangle$, $i=1,2,\ldots,n$ such that $\langle x\rangle \cap \langle y\rangle=\langle \{z_i~|~i=1,2,\ldots,n\}\rangle=\langle \displaystyle{\sum_{i=1}^{n}}~z_i\rangle$. It is then easy to see that \[D(x)\cap D(y)=D\left(\langle x\rangle \cap \langle y\rangle\right)=D\left(\displaystyle{\sum_{i=1}^{n}}~z_i\right).\] Therefore, the base $\mathcal{B}$ is closed under finite intersection. Hence, $\Spec_\Gamma(M)$ is spectral in the sense of Hochster. 
\end{proof}

The following is an immediate corollary to the above proposition.

\begin{cor}\label{cor spec of TM is compact}
Let $\Lambda$ be a row-finite $k$-graph without sources. Then $\Spec_{\mathbb{Z}^k}(T_\Lambda)$ is compact. Moreover, if $|\Lambda^0|<\infty$, then $\Spec_{\mathbb{Z}^k}(T_\Lambda)$ is spectral in the sense of Hochster. In particular, for any finite graph $E$, $\Spec_{\mathbb{Z}}(T_E)$ is spectral in the sense of Hochster. 
\end{cor}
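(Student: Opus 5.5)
The plan is to read off all three assertions from Proposition \ref{pro basic facts about basic open sets}, applied to the $\mathbb{Z}^k$-monoid $M=T_\Lambda$. So the work reduces to checking its hypotheses for talented monoids.

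\textbf{Hypotheses for compactness.} By \cite[Theorem 3.18]{HMPS1}, $T_\Lambda$ is a conical refinement monoid. Since $\Lambda$ has no sources, $\Lambda^0\neq\emptyset$, and the generators $v(\N)$ are nonzero, so $T_\Lambda\neq\{0\}$. Hence parts $(1)$--$(5)$ of Proposition \ref{pro basic facts about basic open sets} apply. Part $(5)$ then gives that every basic open set $D(x)$ is compact and that the whole space $\Spec_{\mathbb{Z}^k}(T_\Lambda)$ is compact.

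\textbf{Where an order unit enters.} In the proof of $(5)$, compactness of the whole space is obtained by writing $\Spec_\Gamma(M)=D(\epsilon)$ for an order unit $\epsilon$. For $T_\Lambda$ the natural candidate is $\epsilon_\Lambda=\sum_{v\in\Lambda^0}v(0)$, which is a genuine element of $T_\Lambda$ only when $\Lambda^0$ is finite. So this route justifies compactness of the whole space when $|\Lambda^0|<\infty$. For infinite $\Lambda^0$, I would first try to exhibit an order unit by another means. I am not confident this succeeds in general: if $\Lambda$ is a disjoint union of infinitely many components, the sets $D(v(0))$ look like they could form an open cover of $\Spec_{\mathbb{Z}^k}(T_\Lambda)$ with no finite subcover. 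Checking whether the unrestricted compactness assertion really follows from $(5)$ is therefore the main obstacle. It is the step I would examine first, before relying on it.

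\textbf{Hochster spectrality when $|\Lambda^0|<\infty$.} Here I must verify the extra hypothesis of Proposition \ref{pro basic facts about basic open sets}: every $\mathbb{Z}^k$-order ideal of $T_\Lambda$ is finitely generated. I would use the known lattice isomorphism $H\mapsto J_H=\langle\{v(0)\mid v\in H\}\rangle$ between hereditary saturated subsets of $\Lambda^0$ and $\mathbb{Z}^k$-order ideals of $T_\Lambda$ (from \cite{HMPS1}).

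\begin{itemize}
\item This isomorphism can also be checked directly. An order ideal $J$ is determined by the set $H=\{v \mid v(0)\in J\}$, since it is closed under the $\mathbb{Z}^k$-action and each element of $T_\Lambda$ is a sum of generators $v(\N)$.
\item Refinement ensures that $J$ is generated by the generators $v(\N)$ it contains, so $J=J_H$.
\end{itemize}

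When $\Lambda^0$ is finite, each such $H$ is finite, so each $J_H$ is generated by the finitely many elements $v(0)$, $v\in H$. The final clause of Proposition \ref{pro basic facts about basic open sets} then yields that $\Spec_{\mathbb{Z}^k}(T_\Lambda)$ is spectral in the sense of Hochster.

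\textbf{Finite graphs.} The statement for a finite graph $E$ is the special case $k=1$ with $\Lambda=E^*$ (its path category), where $T_\Lambda=T_E$. Strictly this needs $E$ row-finite without sources; for a general finite $E$ I would rely on $T_E$ again being a conical refinement $\mathbb{Z}$-monoid with order unit $\sum_{v}v(0)$ and with finitely generated order ideals, by the same hereditary saturated correspondence.
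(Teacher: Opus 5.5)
Your suspicion about the first sentence is justified, and the obstacle you flagged cannot be overcome: the unrestricted compactness claim is false. The paper's proof simply quotes part $(5)$ of Proposition \ref{pro basic facts about basic open sets}. But the ``in particular'' there is proved by writing $\Spec_\Gamma(M)=D(\epsilon)$ for an order unit $\epsilon$, and that proposition never assumes one. In fact an order ideal contains $x_1+\cdots+x_n$ if and only if it contains every $x_i$, so $D(x_1)\cup\cdots\cup D(x_n)=D(x_1+\cdots+x_n)$. Combined with part $(1)$, this shows that a conical refinement $\Gamma$-monoid has compact prime spectrum if and only if it has an order unit.

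Your disjoint-union example makes this concrete. Let $E$ have vertices $v_i$, $i\in\mathbb{N}$, each carrying exactly one loop and nothing else, and put $\Lambda=E^*$. This is row-finite without sources.
\begin{itemize}
\item The defining relations reduce to $v_i(n)=v_i(n+m)$, so $T_\Lambda\cong\bigoplus_{i\in\mathbb{N}}\mathbb{N}$ with trivial $\mathbb{Z}$-action.
\item Its $\mathbb{Z}$-order ideals are the subsums $\bigoplus_{i\in S}\mathbb{N}$ with $S\subseteq\mathbb{N}$. Such an ideal is prime exactly when $S=\mathbb{N}\setminus\{i\}$ for a single $i$, which matches the maximal tails $\{v_i\}$. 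Call this prime $P_i$.
\item Then $D(v_i(0))=\{P_i\}$.
\end{itemize}
So $\Spec_{\mathbb{Z}}(T_\Lambda)$ is an infinite discrete space and is not compact. The first assertion needs an extra hypothesis, such as $|\Lambda^0|<\infty$. The exact condition is that $T_\Lambda$ has an order unit, equivalently that some finite set of vertices has hereditary saturated closure $\Lambda^0$.

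For the parts that are true, your route is the paper's. The order unit $\epsilon_\Lambda$ gives compactness when $|\Lambda^0|<\infty$. Finite generation of every $\mathbb{Z}^k$-order ideal, via $J=J_{H_J}$ with $H_J$ finite, then feeds the last clause of the proposition. That clause is sound despite the flaw in $(5)$: if every order ideal is finitely generated, then $M=\langle z_1,\dots,z_n\rangle=\langle z_1+\cdots+z_n\rangle$, so $M$ has an order unit.

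One correction: $J=J_{H_J}$ does not use refinement. It only needs $J$ to be a down-set closed under the action. Each summand $v(\N)$ of an element of $J$ lies in $J$, and hence so does $v(0)={}^{-\N}v(\N)$.

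Your caveat about finite graphs with sinks is also fair. Such graphs are not literally covered by the hypotheses, and the paper does not address this.
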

\begin{proof}
The first part follows from part $(5)$ of Proposition \ref{pro basic facts about basic open sets} since $T_\Lambda$ has the refinement property by \cite[Theorem 3.18]{HMPS1}. For the second part, let $J$ be any $\mathbb{Z}^k$-order ideal of $T_\Lambda$. Then $J=J_H$ for some hereditary saturated subset $H$ of $\Lambda^0$. Since $|\Lambda^0|<\infty$, $H$ is finite and so $J$ is finitely generated. The result now follows from Proposition \ref{pro basic facts about basic open sets}.
\end{proof}

Next we obtain a necessary condition for the prime spectrum to be connected. 

\begin{prop}\label{pro Necessary condition for connectedness}
Let $\Gamma$ be a group and $M$ a $\Gamma$-monoid which has the refinement property. If $\Spec_\Gamma(M)$ is connected, then $M$ is indecomposable. 
\end{prop}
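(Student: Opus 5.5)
We argue by contraposition: assuming $M$ is decomposable, we will exhibit a separation of $\Spec_\Gamma(M)$ into two disjoint nonempty closed sets. So suppose $J_1,J_2$ are proper $\Gamma$-order ideals with $J_1\cap J_2=\{0\}$ and $\langle J_1\cup J_2\rangle=M$. By the refinement property, $\langle J_1\cup J_2\rangle=J_1+J_2$, so $M=J_1+J_2$. The candidate separation is
\[\Spec_\Gamma(M)=V(J_1)\cup V(J_2),\]
with $V(J_1)$ and $V(J_2)$ closed, disjoint and nonempty.

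\emph{Covering.} We have $V(J_1)\cup V(J_2)=V(J_1\cap J_2)=V(\{0\})=\Spec_\Gamma(M)$. Equivalently, since every prime $P$ contains $\{0\}=J_1\cap J_2$, primeness gives $J_1\subseteq P$ or $J_2\subseteq P$.

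\emph{Disjointness.} If some prime $P$ contained both $J_1$ and $J_2$, then $P\supseteq \langle J_1\cup J_2\rangle=M$. This contradicts properness of $P$, so $V(J_1)\cap V(J_2)=\emptyset$.

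\emph{Nonemptiness.} Since $J_1$ is proper, we may pick $x\notin J_1$. Lemma \ref{lem proper ideal contained in a prime ideal} then gives a prime $P\supseteq J_1$ with $x\notin P$, so $V(J_1)\neq\emptyset$. The same argument shows $V(J_2)\neq\emptyset$.

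Therefore $\Spec_\Gamma(M)$ is the union of two disjoint nonempty closed sets, hence disconnected. This contradicts the hypothesis, so $M$ must be indecomposable.

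The only delicate point is nonemptiness, which needs the existence of primes containing a given proper ideal. Lemma \ref{lem proper ideal contained in a prime ideal} supplies this, and that lemma is where the refinement hypothesis is used. The other two steps follow directly from the definitions.
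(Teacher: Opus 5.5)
Your proof is correct and follows the paper's argument. The paper shows $V(J_1)=\Spec_\Gamma(M)\setminus V(J_2)$ using exactly your covering step (primeness together with $J_1\cap J_2=\{0\}$) and your disjointness step ($\langle J_1\cup J_2\rangle=M$), and it obtains nonemptiness of both sets from Lemma \ref{lem proper ideal contained in a prime ideal}. Your remark that $\langle J_1\cup J_2\rangle=J_1+J_2$ is never used and can be dropped.
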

\begin{proof}
We prove the contrapositive. So assume that $M$ is decomposable. Then there are proper $\Gamma$-order ideals $I$ and $J$ such that $I\cap J=\{0\}$ and $M=\langle I\cup J\rangle$. We claim that $V(I)$ is a nontrivial clopen subset of $\Spec_\Gamma(M)$. It is obviously closed. Let $P\in V(I)$. Then $I\subseteq P$. If $J\subseteq P$, then it will imply $M=\langle I\cup J\rangle\subseteq P$ which is not possible since $P$ is prime and hence proper. Thus, $J\nsubseteq P$ and so $P\in \Spec_\Gamma(M)\setminus V(J)$. Conversely, if $Q\in \Spec_\Gamma(M)\setminus V(J)$, then $J\nsubseteq Q$. Now since $I\cap J=\{0\}\subseteq Q$ and $Q$ is prime, it must happen that $I\subseteq Q$, whence $Q\in V(I)$. Therefore, $V(I)=\Spec_\Gamma(M)\setminus V(J)$. This shows that $V(I)$ is open in $\Spec_\Gamma(M)$. Since $I$ is a proper $\Gamma$-order ideal, by Lemma \ref{lem proper ideal contained in a prime ideal}, $I$ is contained in some prime $\Gamma$-order ideal of $M$. So $V(I)\neq \emptyset$. Similarly, $V(J)\neq \emptyset$ and so $V(I)\neq \Spec_\Gamma(M)$. Hence our claim is established. Therefore, $\Spec_\Gamma(M)$ is not connected. 
\end{proof}
In the next section, we shall see that the converse of the above proposition holds for the talented monoid of a row-finite $k$-graph without sources and with a finite set of objects.

\section{Regular \texorpdfstring{$\Gamma$}{Gamma}-order ideals and the prime spectrum}\label{sec regular ideals}
In \cite{CGHaz}, Cordeiro, Gonc\c alves, and the first author introduced the notion of regular ideals in the setting of $\Gamma$-monoids. Recall that if $J$ is a $\Gamma$-order ideal of a $\Gamma$-monoid $M$, then \[J^{\perp}:=\{x\in M~|~x~||~a~\text{for all}~0\neq a\in J\}.\] When $M$ is a conical refinement monoid, it can be shown that $J^{\perp}$ is also a $\Gamma$-order ideal of $M$ (see \cite[Proposition 2.6]{CGHaz}). We write $J^{\perp \perp}$ to denote $(J^\perp)^\perp$. Clearly, $J\subseteq J^{\perp \perp}$. A $\Gamma$-order ideal $J$ is called \emph{regular} if $J=J^{\perp \perp}$. The set of all regular $\Gamma$-order ideals of $M$ forms a poset with respect to inclusion. We denote this poset by $\reg(M)$. 

The main result of this section is Theorem \ref{th regular ideals via the prime spectrum}, which helps us to realize regular ideals of a conical refinement $\Gamma$-monoid via its prime spectrum. We start by proving two small lemmas. 

\begin{lem}\label{lem V(I)=V(J) implies I=J}
Let $\Gamma$ be a group and $M$ a $\Gamma$-monoid with the refinement property. Let $J_1$, $J_2$ be two $\Gamma$-order ideals of $M$. Then $J_1=J_2$ if and only if $V(J_1)=V(J_2)$ in $\Spec_\Gamma(M)$.    
\end{lem}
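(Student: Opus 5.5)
The forward direction is immediate: if $J_1=J_2$ then $V(J_1)=V(J_2)$ by definition. For the converse, I will use the radical. Assume $V(J_1)=V(J_2)$. By definition,
\[\rad(J_i)=\bigcap_{P\in V(J_i)}P \qquad (i=1,2),\]
so equality of the closed sets gives $\rad(J_1)=\rad(J_2)$ at once.

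It remains to pass from radicals back to the ideals themselves. Part $(3)$ of Proposition \ref{pro basic facts about basic open sets} says $J=\rad(J)$ for every $\Gamma$-order ideal $J$, which would give $J_1=\rad(J_1)=\rad(J_2)=J_2$. That part, however, is stated under a standing conicality hypothesis that the present lemma does not assume. So I will instead argue directly from Lemma \ref{lem proper ideal contained in a prime ideal}, which requires only refinement.

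The direct argument has two cases. If $J_1=M$, then $V(J_1)=\emptyset$. If $J_2$ were proper, pick any $x\notin J_2$; Lemma \ref{lem proper ideal contained in a prime ideal} then yields a prime $P\supseteq J_2$, so $P\in V(J_2)=\emptyset$, which is absurd. Hence $J_2=M$, and symmetrically $J_2=M$ forces $J_1=M$. If both ideals are proper, suppose $x\in J_1\setminus J_2$. Lemma \ref{lem proper ideal contained in a prime ideal} gives a prime $P$ with $J_2\subseteq P$ and $x\notin P$. Then $P\in V(J_2)=V(J_1)$, so $J_1\subseteq P$ and hence $x\in P$, a contradiction. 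Thus $J_1\subseteq J_2$, and by symmetry $J_2\subseteq J_1$.

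The only delicate point is checking that this argument uses refinement alone and never conicality. Lemma \ref{lem proper ideal contained in a prime ideal} needs just properness of the ideal and an element outside it, so the direct argument goes through as stated.
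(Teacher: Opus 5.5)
Your argument is correct and is essentially the paper's proof, which writes $J_1=\rad(J_1)=\bigcap_{P\in V(J_1)}P=\bigcap_{P\in V(J_2)}P=\rad(J_2)=J_2$ using Proposition~\ref{pro basic facts about basic open sets}(3). You have simply inlined the proof of that part, which is exactly the separation argument from Lemma~\ref{lem proper ideal contained in a prime ideal}; your remark about conicality is accurate, but the paper's citation is still valid in substance, since the proof of part~(3) uses only refinement (through that same lemma) even though the proposition's standing hypotheses nominally include conicality.
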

\begin{proof}
The forward direction is obvious. So assume that $V(J_1)=V(J_2)$. Now, using Proposition \ref{pro basic facts about basic open sets}(3), we have $J_1=\rad(J_1)=\displaystyle{\bigcap_{P\in V(J_1)}}P=\displaystyle{\bigcap_{P\in V(J_2)}}P=\rad(J_2)=J_2$.
\end{proof}
\begin{lem}\label{lem perp via intersection}
Let $\Gamma$ be a group and $M$ a conical $\Gamma$-monoid with the refinement property. Let $J$ be a $\Gamma$-order ideal of $M$ and $x\in M$. Then $x\in J^\perp$ if and only if $\langle x \rangle \cap J=\{0\}$.     
\end{lem}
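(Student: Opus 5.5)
We prove the two implications separately. The tools are the definition of $J^\perp$, the fact that $\Gamma$-order ideals are down-sets closed under the $\Gamma$-action, and the refinement property, which lets us decompose an element bounded above by a sum of translates.

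For the forward direction, suppose $x\in J^\perp$ and let $a\in \langle x\rangle\cap J$. Assume, for a contradiction, that $a\neq 0$. The key point is to produce a nonzero element of $J$ comparable to $x$. Since $a\in\langle x\rangle$, we have $a\le {}^{\alpha}x=\sum_{i=1}^t {}^{\gamma_i}x$ for some $\gamma_i\in\Gamma$ (repetitions allowed). By the refinement property, iterated in the standard way for a sum of several terms, we can write $a=\sum_{i} a_i$ with $a_i\le {}^{\gamma_i}x$. Since $M$ is conical and $a\neq 0$, some $a_j\neq 0$. Acting by $\gamma_j^{-1}$ gives $b:={}^{\gamma_j^{-1}}a_j\le x$. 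Here $b\neq 0$, because the action is by automorphisms. Also $b\in J$, because $a_j\le a\in J$ and $J$ is closed under the $\Gamma$-action. So $b$ is a nonzero element of $J$ with $b\le x$, which contradicts $x\,||\,b$. Hence $\langle x\rangle\cap J=\{0\}$.

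For the converse, suppose $\langle x\rangle\cap J=\{0\}$ and let $0\neq a\in J$. We must show that $x\nleq a$ and $a\nleq x$.
\begin{itemize}
\item If $a\le x$, then $a\in\langle x\rangle\cap J$, so $a=0$, a contradiction.
\item If $x\le a$, then $x\in J$ because $J$ is a down-set. Hence $x\in\langle x\rangle\cap J=\{0\}$, so $x=0$. But then $0\le a$, so $x$ and $a$ are comparable and $x\notin J^\perp$.
\end{itemize}

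The second case raises the degenerate situation $x=0$. With the paper's definition, $0\in J^\perp$ only when $J=\{0\}$, whereas $\langle 0\rangle\cap J=\{0\}$ holds for every $J$. So the statement should be read under the convention used in \cite{CGHaz}, namely that $0\in J^\perp$ (so that $J^\perp$ is an order ideal), or equivalently for nonzero $x$. I will adopt that convention explicitly: take $x\neq 0$, and treat $x=0$ as lying in both sides by convention. Under that reading, the case $x\le a$ gives $x=0$, which is excluded, so $x\,||\,a$ for every nonzero $a\in J$.

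The main obstacle is the forward direction: extracting from $a\le\sum_i {}^{\gamma_i}x$ a single nonzero piece comparable to $x$. This needs the multi-term refinement decomposition together with conicality.
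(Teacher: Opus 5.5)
Your proof is correct, and your converse direction is the same as the paper's. The difference is in the forward direction.

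The paper argues contrapositively. From $0\neq a\in J$ with $a\le {}^{\alpha}x$ it gets ${}^{\alpha}x\notin J^\perp$. It then uses that $J^\perp$ is itself a $\Gamma$-order ideal (\cite[Proposition 2.6]{CGHaz}, which is where conicality and refinement enter), so $J^\perp$ is closed under the $\mathbb{N}[\Gamma]$-action, and concludes $x\notin J^\perp$.

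You instead apply refinement directly: you split $a=\sum_i a_i$ with $a_i\le {}^{\gamma_i}x$ and translate a nonzero piece back to a nonzero $b\in J$ with $b\le x$. Your route is self-contained and in effect reproves the part of the cited proposition that is actually needed, namely closure of $J^\perp$ under sums of translates. The paper's route is shorter but depends on that external result.

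Your remark about $x=0$ is also accurate. Under the incomparability definition as literally written, $0\in J^\perp$ only when $J=\{0\}$. So the paper's ``obvious if $x=0$'' tacitly uses the convention $0\in J^\perp$. That convention is forced anyway if $J^\perp$ is to be an order ideal, as the paper asserts.
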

\begin{proof}
The result is obvious if $x=0$. So we may assume that $x\neq 0$. Suppose $\langle x\rangle \cap J\neq \{0\}$. Pick any $0\neq a\in \langle x\rangle \cap J$. Then $a\le~^\alpha x$ for some $\alpha\in \mathbb{N}[\Gamma]$. This implies that $^\alpha x\notin J^\perp$. Since $J^\perp$ is a $\Gamma$-order ideal, it is closed under the $\mathbb{N}[\Gamma]$-action. It follows that $x\notin J^\perp$. Conversely, if $x\notin J^\perp$, then $x$ is comparable to some element $y\in J\setminus \{0\}$. Then either $x\le y$ or $y<x$. If $x\le y$, then $x\in \langle x\rangle \cap J$, and consequently, $\langle x\rangle \cap J\neq \{0\}$. Again if $y< x$, then $y\in \langle x\rangle \cap J$ proving that $\langle x \rangle \cap J\neq \{0\}$. Hence the lemma follows.  
\end{proof}
We are now ready to prove the main result of this section.
\begin{thm}\label{th regular ideals via the prime spectrum}
Let $\Gamma$ be a group and $M$ a conical $\Gamma$-monoid with the refinement property. Let $J$ be any $\Gamma$-order ideal of $M$. Then the following are equivalent.

$(i)$ $J$ is regular.

$(ii)$ $V(J)$ is a regular closed set in $\Spec_\Gamma(M)$.

$(iii)$ $D(J)$ is a regular open set in $\Spec_\Gamma(M)$.

The map 
\begin{align*}
\varphi:\reg(M)&\longrightarrow \mathcal{R}\mathcal{O}(\Spec_\Gamma(M))\\
J&\longmapsto D(J)
\end{align*}
is an order-isomorphism, and consequently $\reg(M)$ is a complete Boolean algebra. 
\end{thm}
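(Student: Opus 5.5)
The key step is to show that the perp operation on $\Gamma$-order ideals corresponds to the regular-open complement on $\Spec_\Gamma(M)$:
\[D(J^\perp)=\Spec_\Gamma(M)\setminus\overline{D(J)}=\Int\bigl(V(J)\bigr).\]
Once this holds, everything else is formal. Applying it twice gives $D(J^{\perp\perp})=\Int(\overline{D(J)})$. Combined with Lemma \ref{lem V(I)=V(J) implies I=J} (equivalently, $D(I)=D(J)$ implies $I=J$), we get $J=J^{\perp\perp}$ if and only if $D(J)=\Int(\overline{D(J)})$, which is $(i)\Leftrightarrow(iii)$. Since $V(J)$ is the complement of $D(J)$, regular closedness of $V(J)$ is equivalent to regular openness of $D(J)$, giving $(ii)\Leftrightarrow(iii)$.

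To prove the identity, first establish $D(I)\cap D(J)=D(I\cap J)$. This holds because primes satisfy: $I\cap J\subseteq P$ implies $I\subseteq P$ or $J\subseteq P$, and conversely $I\cap J\subseteq I$. Next, $D(K)=\emptyset$ iff $K=\{0\}$, by Proposition \ref{pro basic facts about basic open sets}(3)/(4). Then for $x\in M$, Lemma \ref{lem perp via intersection} gives
\[x\in J^\perp\iff \langle x\rangle\cap J=\{0\}\iff D(\langle x\rangle\cap J)=\emptyset\iff D(x)\cap D(J)=\emptyset.\]
Hence $D(J^\perp)=\bigcup_{x\in J^\perp}D(x)$ is the union of all basic open sets disjoint from $D(J)$. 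Since the $D(x)$ form a basis, this union is the largest open set disjoint from $D(J)$, namely $\Spec_\Gamma(M)\setminus\overline{D(J)}$. Conicality and refinement enter here, through the fact from \cite{CGHaz} that $J^\perp$ is a $\Gamma$-order ideal and through Lemma \ref{lem perp via intersection}.

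For the order-isomorphism, $\varphi$ is well defined by $(iii)$ and injective by Lemma \ref{lem V(I)=V(J) implies I=J}. It preserves and reflects order: $I\subseteq J$ iff $V(J)\subseteq V(I)$ iff $D(I)\subseteq D(J)$, where the backward direction uses $\rad$ from Proposition \ref{pro basic facts about basic open sets}(3). For surjectivity, take a regular open set $U$. Every open set has the form $D(K)$ for some $\Gamma$-order ideal $K$, since closed sets are $V(K)$. Put $J=K^{\perp\perp}$. By the identity, $D(J)=\Int(\overline{D(K)})=U$, and $J$ is regular because $J^{\perp\perp\perp\perp}=J^{\perp\perp}$; alternatively, this follows since $D(J)$ is regular open and $(iii)\Rightarrow(i)$. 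Finally, $\mathcal{RO}(\Spec_\Gamma(M))$ is a complete Boolean algebra, so the order-isomorphic poset $\reg(M)$ is one too.

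The main obstacle is the identity $D(J^\perp)=\Spec_\Gamma(M)\setminus\overline{D(J)}$, specifically the step passing from $D(x)\cap D(J)=\emptyset$ to $\langle x\rangle\cap J=\{0\}$. This is where the prime-intersection property and the vanishing criterion (4) are used; the rest is bookkeeping.
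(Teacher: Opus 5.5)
Your proposal is correct and follows essentially the same route as the paper: your identity $D(J^\perp)=\Int(V(J))$ is the complement of the paper's key equation $V(I^\perp)=\Spec_\Gamma(M)\setminus\Int(V(I))$, and you prove it from the same ingredients. These are Lemma \ref{lem perp via intersection}, $\nil(M)=\{0\}$, and primeness giving $D(x)\cap D(J)=D(\langle x\rangle\cap J)$; after that, the double application, Lemma \ref{lem V(I)=V(J) implies I=J}, and the radical argument for order-reflection match the paper. The only slip is in your surjectivity step, where ``$J^{\perp\perp\perp\perp}=J^{\perp\perp}$'' should read $K^{\perp\perp\perp\perp}=K^{\perp\perp}$; it is simpler anyway, as the paper does, to note that $U=D(K)$ regular open already makes $K$ regular by $(iii)\Rightarrow(i)$.
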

\begin{proof}
Since $D(J)=\Spec_\Gamma(M)\setminus V(J)$, the equivalence of $(ii)$ and $(iii)$ is obvious. We only show that $(i)\Leftrightarrow (ii)$. For this, we first prove that $V(I^\perp)=\Spec_\Gamma(M)\setminus \Int(V(I))$ for any $\Gamma$-order ideal $I$ of $M$.

Let $P\in V(I^\perp)$. We show that $P$ is not an interior point of $V(I)$. Let $x\in M$ and $D(x)$ any basic open set containing $P$. Then $x\notin P$. Since $P\in V(I^\perp)$, $I^\perp\subseteq P$. Then $x\notin I^\perp$. By Lemma \ref{lem perp via intersection}, we then have $\langle x\rangle \cap I\neq \{0\}$. Since $M$ is a refinement monoid, $\nil(M)=\{0\}$. It follows that there is a prime $\Gamma$-order ideal $Q$ of $M$ such that $\langle x\rangle \cap I\nsubseteq Q$. Then $x\notin Q$ and $I\nsubseteq Q$. Hence, $Q\in D(x)\cap (\Spec_\Gamma(M)\setminus V(I))$. We have shown that there is no basic open set containing $P$ which is entirely contained in $V(I)$. Therefore, $P\in \Spec_\Gamma(M)\setminus \Int(V(I))$. It follows that $V(I^\perp)\subseteq \Spec_\Gamma(M)\setminus \Int(V(I))$. 

For the other inclusion, let $P\notin V(I^\perp)$. Then $I^\perp \nsubseteq P$. Choose any $x\in I^\perp \setminus P$. We claim that $D(x)\subseteq V(I)$. Let $Q\in D(x)$. Consider the $\Gamma$-order ideal $\langle x\rangle \cap I$. Since $x\in I^\perp$, by Lemma \ref{lem perp via intersection}, we have $\langle x\rangle \cap I=\{0\}$. Since $Q$ is prime and $0\in Q$, $\langle x\rangle \subseteq Q$ or $I\subseteq Q$. But $x\notin Q$ since $Q\in D(x)$. Hence, $I\subseteq Q$, whence $Q\in V(I)$. The claim is thus established. Now since $x\notin P$, $D(x)$ is an open set containing $P$, which is entirely contained in $V(I)$. So $P\in \Int(V(I))$. This shows that $\Spec_\Gamma(M)\setminus \Int(V(I))\subseteq V(I^\perp)$. Hence, 
\begin{equation}\label{eq the perp}
V(I^\perp)=\Spec_\Gamma(M)\setminus \Int(V(I)).
\end{equation}
Writing equality (\ref{eq the perp}) first for $I=J^\perp$ and then for $I=J$, we get 
\begin{align*}
V(J^{\perp\perp})&=\Spec_\Gamma(M)\setminus \Int(V(J^\perp))\\
&=\Spec_\Gamma(M)\setminus \Int\left(\Spec_\Gamma(M)\setminus \Int(V(J))\right)\\
&=\overline{\Spec_\Gamma(M)\setminus \left(\Spec_\Gamma(M)\setminus \Int(V(J))\right)}\\
&=\overline{\Int(V(J))}.
\end{align*}
If $J$ is regular, then $J=J^{\perp \perp}$, and we have $V(J)=\overline{\Int(V(J))}$. Hence, $V(J)$ is a regular closed set. On the other hand, if $V(J)$ is regular closed, then $V(J^{\perp \perp})=\overline{\Int(V(J))}=V(J)$. Then $J^{\perp\perp}=J$ by Lemma \ref{lem V(I)=V(J) implies I=J} and hence, $J$ is regular. 

By the first part, it is clear that the map $\varphi$ is well-defined and surjective. If for two regular $\Gamma$-order ideals, $J_1,J_2$, $D(J_1)=D(J_2)$, then $V(J_1)=V(J_2)$ which implies $J_1=J_2$ by Lemma \ref{lem V(I)=V(J) implies I=J}. This shows that $\varphi$ is injective. It clearly preserves the order. Let $J_1,J_2$ be regular $\Gamma$-order ideals such that $D(J_1)\subseteq D(J_2)$. Then $V(J_2)\subseteq V(J_1)$. Now using Proposition \ref{pro basic facts about basic open sets}(3), we have \[J_1=\rad(J_1)=\displaystyle{\bigcap_{P\in V(J_1)}}P\subseteq \displaystyle{\bigcap_{P\in V(J_2)}}P=\rad(J_2)=J_2.\] Therefore, $\varphi$ also reflects the order. Hence, it is an order-isomorphism. Since $\mathcal{R}\mathcal{O}(\Spec_\Gamma(M))$ is a complete Boolean algebra, it now follows that $\reg(M)$ is also a complete Boolean algebra. 
\end{proof}
In view of the order-isomorphism (in fact lattice isomorphism) $\varphi$ of Theorem \ref{th regular ideals via the prime spectrum}, we can explicitly describe the operations on the Boolean algebra $\reg(M)$. 

Let $J,J_1,J_2\in \reg(M)$. We first simplify the join, meet and complement in $\mathcal{R}\mathcal{O}(\Spec_\Gamma(M))$ using (\ref{eq the perp}). Note that 
\begin{align*}
D(J_1)\vee D(J_2)=\Int(\overline{D(J_1)\cup D(J_2)})&=\Int(\overline{D(J_1+J_2)})\\
&=\Int\left(\overline{\Spec_\Gamma(M)\setminus V(J_1+J_2)}\right)\\
&=\Int\left(\Spec_\Gamma(M)\setminus \Int(V(J_1+J_2))\right)\\
&=\Spec_\Gamma(M)\setminus \overline{\Int(V(J_1+J_2))}\\
&=\Spec_\Gamma(M)\setminus V\left((J_1+J_2)^{\perp\perp}\right)=D\left((J_1+J_2)^{\perp\perp}\right).
\end{align*}
Again, $D(J_1)\wedge D(J_2)=D(J_1)\cap D(J_2)=D(J_1\cap J_2)$ and \[D(J)'=\Int(\Spec_\Gamma(M)\setminus D(J))=\Int(V(J))=\Spec_\Gamma(M)\setminus V(J^\perp)=D(J^\perp).\] Therefore, the join, meet, and the complement operations in $\reg(M)$ are given by \[J_1\vee J_2=\varphi^{-1}\left(D(J_1)\vee D(J_2)\right)=(J_1+J_2)^{\perp\perp},\] \[J_1\wedge J_2=\varphi^{-1}\left(D(J_1)\wedge D(J_2)\right)=J_1\cap J_2,\] and \[J'=\varphi^{-1}\left(D(J)'\right)=J^\perp.\] 

It is interesting and sometimes helpful to consider the complements of prime $\Gamma$-order ideals in a $\Gamma$-monoid (for instance, see Theorem \ref{th maximal tails and prime filters}). To understand these complements precisely, we define the notion of $\Gamma$-filters. The reason for putting this discussion here will be apparent from Corollary \ref{cor regular prime order ideals}.  

\begin{dfn}\label{def Gamma filters and prime Gamma filters}
Let $\Gamma$ be a group and $M$ a $\Gamma$-monoid. A nonempty subset $\mathcal{F}$ of $M$ is called a $\Gamma$-\emph{filter} if the following conditions hold:

(F1) $0\notin \mathcal{F}$ and $\mathcal F$ is closed under the action of $\Gamma$;

(F2) for every $x\in \mathcal{F}$, $y\in M$ with $y\ge x$, $y\in \mathcal{F}$; 

(F3) for any two $x,y\in \mathcal{F}$, there exist $\alpha,\beta\in \mathbb{N}[\Gamma]$ and $z\in \mathcal{F}$ such that $^\alpha x,~^\beta y\ge z$. 

Let $\mathcal{F}$ be a $\Gamma$-filter. Then $\mathcal{F}$ is called \emph{prime} if for $x,y\in M$, $x+y\in \mathcal{F}$ implies either $x\in \mathcal{F}$ or $y\in \mathcal{F}$. 
\end{dfn}

Let $M$ be a conical $\Gamma$-monoid. For $x,y\in M$ and $X\subseteq M$, we write $x \Cap_X y$ if there exist $\alpha,\beta\in \mathbb{N}[\Gamma]$ and $0\neq z\in X$ such that $^\alpha x,~^\beta y\ge z$. In view of this notation, (F3) in Definition \ref{def Gamma filters and prime Gamma filters} is equivalent to saying that $x\Cap_\mathcal{F} y$ for all $x,y\in \mathcal{F}$. There is a canonical way to construct $\Gamma$-filters in $M$. For any $\emptyset\neq X\subseteq M$ with the property that $x\Cap_X y$ for all $x,y\in X$, it is easy to check that the set \[\mathcal{F}(X):=\{x\in M~|~^\gamma x\ge y~\text{for some}~\gamma\in \Gamma~\text{and}~y\in X\}\] is a $\Gamma$-filter containing $X$. We call $\mathcal{F}(X)$ the $\Gamma$-filter generated by $X$ (it is the smallest $\Gamma$-filter that contains $X$).

We denote the set of all $\Gamma$-filters (resp., prime $\Gamma$-filters) of $M$ by $\mathscr{F}il(M)$ (resp., $\mathscr{P}\mathscr{F}il(M)$). One can define a topology on $\mathscr{F}il(M)$ as follows: for any $\Gamma$-order ideal $J$ of $M$, define \[\mathscr{D}(J):=\{\mathcal{F}\in \mathscr{F}il(M)~|~J\cap \mathcal{F}\neq \emptyset\}.\] It is then easy to check that the sets $\mathscr{D}(J)$ cover $\mathscr{F}il(M)$ and $\mathscr{D}(J_1)\cap \mathscr{D}(J_2)=\mathscr{D}(J_1\cap J_2)$ for any two $\Gamma$-order ideals $J_1$ and $J_2$. It follows that the sets $\mathscr{D}(J)$ form a basis for a topology on $\mathscr{F}il(M)$.

We consider $\mathscr{P}\mathscr{F}il(M)$ as a topological space under the subspace topology inherited from the topology of $\mathscr{F}il(M)$ defined above. We now state the following theorem which connects prime $\Gamma$-order ideals and prime $\Gamma$-filters.

\begin{prop}\label{pro dual of prime order ideals}
Let $\Gamma$ be a group and $M$ a $\Gamma$-monoid. Let $J$ be a proper subset of $M$. Then $J$ is a prime $\Gamma$-order ideal of $M$ if and only if $\mathcal{F}_J:= M\setminus J$ is a prime $\Gamma$-filter of $M$. Moreover, the map $\phi:\Spec_\Gamma(M)\longrightarrow \mathscr{P}\mathscr{F}il(M)$ defined by $\phi(J):=\mathcal{F}_J$ is a homeomorphism.   
\end{prop}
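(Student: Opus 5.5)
The plan is to verify the two directions of the correspondence directly from the definitions, and then observe that complementation carries the open sets $D(I)$ exactly onto the basic open sets $\mathscr{D}(I)$. No refinement or conicality is needed: only the submonoid, down-set and $\Gamma$-invariance properties of order ideals are used, together with the fact that $\Gamma$ is a group.

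\emph{($\Rightarrow$)} Let $J$ be a prime $\Gamma$-order ideal and $\mathcal{F}_J=M\setminus J$.
\begin{itemize}
\item $\mathcal{F}_J\neq\emptyset$ because $J$ is proper, and $0\notin\mathcal{F}_J$ because $J$ is a submonoid.
\item $\mathcal{F}_J$ is closed under $\Gamma$, since $J$ is $\Gamma$-invariant and $\Gamma$ is a group; this gives (F1).
\item (F2) holds because $J$ is a down-set.
\item Primeness of the filter: if $x+y\in\mathcal{F}_J$, then $x,y$ cannot both lie in the submonoid $J$.
\end{itemize}
The only step needing the primeness of $J$ is (F3). For $x,y\in\mathcal{F}_J$ we have $\langle x\rangle\nsubseteq J$ and $\langle y\rangle\nsubseteq J$, so primeness gives $\langle x\rangle\cap\langle y\rangle\nsubseteq J$. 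Pick $z\in(\langle x\rangle\cap\langle y\rangle)\setminus J$. By the description of $\langle x\rangle$ we have $z\le \sum_i {}^{\alpha_i}x={}^{\alpha}x$ with $\alpha=\sum_i\alpha_i\in\mathbb{N}[\Gamma]$. Similarly $z\le {}^{\beta}y$, and $z\in\mathcal{F}_J$, which is (F3).

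\emph{($\Leftarrow$)} Let $\mathcal{F}=M\setminus J$ be a prime $\Gamma$-filter.
\begin{itemize}
\item $0\in J$ by (F1), and $J$ is proper since $\mathcal{F}\ne\emptyset$.
\item $J$ is $\Gamma$-invariant because $\mathcal{F}$ is and $\Gamma$ is a group.
\item $J$ is a down-set by (F2).
\item $J$ is closed under addition: if $x,y\in J$ but $x+y\in\mathcal{F}$, then primeness of $\mathcal{F}$ forces $x\in\mathcal{F}$ or $y\in\mathcal{F}$, which is impossible.
\end{itemize}
For primeness of $J$, suppose $J_1\cap J_2\subseteq J$ and that there are $x\in J_1\setminus J$ and $y\in J_2\setminus J$. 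Then $x,y\in\mathcal{F}$, so (F3) yields $z\in\mathcal{F}$ with $z\le{}^\alpha x$ and $z\le{}^\beta y$. Since $J_1$ and $J_2$ are $\Gamma$-invariant submonoids, ${}^\alpha x\in J_1$ and ${}^\beta y\in J_2$. As they are down-sets, $z\in J_1\cap J_2\subseteq J$, contradicting $z\in\mathcal{F}$.

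\emph{The homeomorphism.} The map $\phi$ is a bijection by the above, with inverse $\mathcal{F}\mapsto M\setminus\mathcal{F}$. For any $\Gamma$-order ideal $I$ and prime $P$, we have $I\nsubseteq P$ if and only if $I\cap\mathcal{F}_P\neq\emptyset$. Hence
\[
\phi(D(I))=\mathscr{D}(I)\cap\mathscr{P}\mathscr{F}il(M).
\]
Every open set of $\Spec_\Gamma(M)$ has the form $D(I)$, and the sets $\mathscr{D}(I)\cap\mathscr{P}\mathscr{F}il(M)$ form a basis of the subspace topology. Therefore $\phi$ is both open and continuous, so it is a homeomorphism.

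I do not expect a genuine obstacle. The only point requiring care is (F3) in the forward direction, where primeness of $J$ must be converted into a common lower bound. This works through the explicit description of $\langle x\rangle\cap\langle y\rangle$ and by collecting $\sum_i{}^{\alpha_i}x$ into a single ${}^{\alpha}x$.
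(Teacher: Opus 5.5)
Your proof is correct and follows essentially the same route as the paper. The paper's auxiliary ideals $J_x,J_y$ are exactly your $\langle x\rangle,\langle y\rangle$ after collapsing $\sum_i{}^{\alpha_i}x$ into ${}^{\alpha}x$. Its converse likewise checks the order-ideal axioms and then primeness via (F3), and the homeomorphism rests on $\phi(D(I))$ matching the basic sets $\mathscr{D}(I)$. You are slightly more careful in two places: you use $\Gamma$-invariance (with $\Gamma$ a group), rather than (F2) as the paper cites, to pass from ${}^{\gamma}x$ back to $x$, and you write $\phi(D(I))=\mathscr{D}(I)\cap\mathscr{P}\mathscr{F}il(M)$ to respect the subspace topology.
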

\begin{proof}
Suppose $J$ is a prime $\Gamma$-order ideal. Since $J\subsetneq M$, $\mathcal{F}_J\neq \emptyset$. Clearly, $0\notin \mathcal{F}_J$. Let $x\in \mathcal{F}_J$, $y\in M$ such that $y\ge x$. If there exists $\gamma\in \Gamma$ such that $^\gamma y\in J$ then $y\in J$ as $J$ is closed under the $\mathbb{Z}^k$-action. This in turn implies $x\in J$ since $J$ is an order ideal. Therefore, $^\gamma y\in \mathcal{F}_J$ for all $\gamma\in \Gamma$. Let $x,y\in \mathcal{F}_J$. Define \[J_x:=\{a\in M~|~a\le~^\alpha x~\text{for some}~\alpha\in \mathbb{N}[\Gamma]\}\] and \[J_y:=\{b\in M~|~b\le~^\beta x~\text{for some}~\beta\in \mathbb{N}[\Gamma]\}.\] Then $J_x,J_y$ are $\Gamma$-order ideals of $M$. Since $x,y\notin J$, $J_x,J_y\nsubseteq J$. Since $J$ is prime, this implies $J_x\cap J_y\nsubseteq J$. Hence, there exists $z\in (J_x\cap J_y)\setminus J$, which amounts to say that there exist $\alpha,\beta\in \mathbb{N}[\Gamma]$ such that $^\alpha x,~^\beta y\ge z$ and $z\in \mathcal{F}_J$. This proves that $\mathcal{F}_J$ is a $\Gamma$-filter of $M$. To show that it is prime, let $x,y\in M$ be such that $x+y\in \mathcal{F}_J$. If both $x,y\in J$, then it would imply $x+y\in J$ since $J$ is a submonoid. Hence, either $x\in \mathcal{F}_J$ or $y\in \mathcal{F}_J$. 

Conversely, assume that $\mathcal{F}_J$ is a prime $\Gamma$-filter of $M$. Let $x,y\in M$ and $\alpha,\beta\in \Gamma$. Suppose $^\alpha x+~^\beta y\in J$. Now if $x\in \mathcal{F}_J$, then by (F2), $^\alpha x+~^\beta y\in \mathcal{F}_J$, a contradiction. So $x\in J$. Similarly, $y\in J$. Now suppose both $x,y\in J$. If $^\alpha x+~^\beta y\in \mathcal{F}_J$, then $^\alpha x\in \mathcal{F}_J$ or $^\beta y\in \mathcal{F}_J$ since $\mathcal{F}_J$ is a prime $\Gamma$-filter. But this would imply $x\in \mathcal{F}_J$ or $y\in \mathcal{F}_J$ by (F2). Hence, $^\alpha x+~^\beta y\in J$. We have shown that $J$ is a $\Gamma$-order ideal. Now suppose $J_1,J_2$ are $\Gamma$-order ideals with $J_1\cap J_2\subseteq J$. If possible suppose neither $J_1\subseteq J$ nor $J_2\subseteq J$. Then there exist $x\in J_1\setminus J$ and $y\in J_2\setminus J$. By (F3), there exist $\alpha,\beta\in \mathbb{N}[\Gamma]$ and $z\in \mathcal{F}_J$ such that $^\alpha x,~^\beta y\ge z$. Since $J_1,_2$ are $\Gamma$-order ideals, we have $z\in (J_1\cap J_2)\setminus J$, a contradiction. Hence, $J_1\subseteq J$ or $J_2\subseteq J$ showing that $J$ is a prime $\Gamma$-order ideal. 

The final part is straightforward by noting that $\phi(D(J))=\mathscr{D}(J)$ for any $\Gamma$-order ideal $J$ of $M$.    
\end{proof}

Fix a conical refinement $\Gamma$-monoid $M$. When $\mathcal{F}$ is a prime $\Gamma$-filter, it is clear from Proposition \ref{pro dual of prime order ideals} that $J_\mathcal{F}:=M\setminus \mathcal{F}$ is a prime $\Gamma$-order ideal. If $x,y\in J_\mathcal{F}^\perp\setminus \{0\}$, then $x,y\in \mathcal{F}$ and so by (F3), there exist $\alpha,\beta\in \mathbb{N}[\Gamma]$ and $z\in \mathcal{F}$ such that $^\alpha x,~^\beta y\ge z$. Since $J_{\mathcal{F}}^\perp$ is a $\Gamma$-order ideal, it follows that $z\in J_\mathcal{F}^\perp\setminus \{0\}$. Therefore, $x\Cap_{J_\mathcal{F}^\perp\setminus \{0\}} y$ for all $x,y\in J_\mathcal{F}^\perp\setminus \{0\}$. Hence if $J_\mathcal{F}^\perp\setminus \{0\}\neq \emptyset$, then $\mathcal{F}(J_\mathcal{F}^\perp\setminus \{0\})$ is a $\Gamma$-filter containing $J_\mathcal{F}^\perp\setminus \{0\}$. Since $J_\mathcal{F}^\perp\setminus \{0\}\subseteq \mathcal{F}$, it is clear that $\mathcal{F}(J_\mathcal{F}^\perp\setminus \{0\})\subseteq \mathcal{F}$. 

We define \[\mathscr{P}\mathscr{F}il_r(M):=\{\mathcal{F}\in \mathscr{P}\mathscr{F}il(M)~|~\mathcal{F}=\mathcal{F}(J_\mathcal{F}^\perp\setminus \{0\})\},\] and \[\reg\Spec_\Gamma(M):=\{P\in \Spec_\Gamma(M)~|~P~\text{is regular}\}.\] We claim that the homeomorphism $\phi$ of Proposition \ref{pro dual of prime order ideals} maps the subspace $\reg\Spec_\Gamma(M)$ onto the subspace $\mathscr{P}\mathscr{F}il_r(M)$. Suppose $P$ is a regular prime $\Gamma$-order ideal. Then $J_{\mathcal{F}_P}^\perp\setminus \{0\}=P^\perp\setminus \{0\}$. It is nonempty since otherwise we would have $P=P^{\perp\perp}=M$, a contradiction to the fact that $P$ is prime. We already have $\mathcal{F}(J_{\mathcal{F}_P}^\perp\setminus \{0\})=\mathcal{F}(P^\perp\setminus \{0\})\subseteq \mathcal{F}_P$. If $x\notin \mathcal{F}(P^\perp\setminus 0)$, then $x\in P^{\perp\perp}=P$. Therefore, we have $\mathcal{F}(P^\perp\setminus \{0\})=\mathcal{F}_P$, which shows that $\phi(\reg\Spec_\Gamma(M))\subseteq \mathscr{P}\mathscr{F}il_r(M)$. Now let $\mathcal{F}$ be a prime $\Gamma$-filter such that $\mathcal{F}=\mathcal{F}(J_\mathcal{F}^\perp\setminus \{0\})$. Then $J_\mathcal{F}$ is a prime $\Gamma$-order ideal and $\phi(J_\mathcal{F})=\mathcal{F}$. We just need to show that $J_\mathcal{F}$ is regular. Let $x\in J_\mathcal{F}^{\perp\perp}$. If $x\in \mathcal{F}$, then $x\in \mathcal{F}(J_\mathcal{F}^\perp\setminus \{0\})$. This implies that $^\gamma x\ge a$ for some $\gamma\in \Gamma$ and $a\in J_\mathcal{F}^\perp\setminus \{0\}$, whence $x\notin J_\mathcal{F}^{\perp\perp}$. This contradicts the assumption. Hence, $x\in M\setminus \mathcal{F}=J_\mathcal{F}$. So $J_\mathcal{F}^{\perp\perp}=J_\mathcal{F}$. 

What we have proved in the above paragraph is summarized in the following corollary.
\begin{cor}\label{cor regular prime order ideals}
Let $\Gamma$ be a group and $M$ a conical refinement $\Gamma$-monoid. Then $\reg\Spec_\Gamma(M)$ is homeomorphic to $\mathscr{P}\mathscr{F}il_r(M)$.     
\end{cor}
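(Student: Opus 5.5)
The plan is to restrict the homeomorphism $\phi:\Spec_\Gamma(M)\to \mathscr{P}\mathscr{F}il(M)$, $P\mapsto \mathcal{F}_P=M\setminus P$, of Proposition \ref{pro dual of prime order ideals} to the subspace $\reg\Spec_\Gamma(M)$. A homeomorphism restricts to a homeomorphism from any subspace onto its image, with the respective subspace topologies. So it suffices to prove the set equality $\phi(\reg\Spec_\Gamma(M))=\mathscr{P}\mathscr{F}il_r(M)$; no further topological work is needed.

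For the inclusion $\subseteq$, take $P$ regular prime. Then $J_{\mathcal{F}_P}=P$, and $P^\perp\setminus\{0\}\neq\emptyset$, since otherwise $P^\perp=\{0\}$ and $P=P^{\perp\perp}=\{0\}^\perp=M$; here one uses conicality to see that every element is incomparable with nothing nonzero. The discussion preceding the corollary (using (F3) and the fact that $P^\perp$ is a $\Gamma$-order ideal by \cite[Proposition 2.6]{CGHaz}) shows that $\mathcal{F}(P^\perp\setminus\{0\})$ is a $\Gamma$-filter contained in $\mathcal{F}_P$. For the reverse containment, take $x\in\mathcal{F}_P$, so $x\notin P=P^{\perp\perp}$. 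Then $x$ is comparable with some $0\neq a\in P^\perp$. If $x\ge a$, we are done. If $x<a$, then $x\in P^\perp$ because $P^\perp$ is a down-set, and $x\neq 0$ since $x\notin P$; hence $x\in \mathcal{F}(P^\perp\setminus\{0\})$ trivially.

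For the inclusion $\supseteq$, take $\mathcal{F}$ with $\mathcal{F}=\mathcal{F}(J_\mathcal{F}^\perp\setminus\{0\})$. Proposition \ref{pro dual of prime order ideals} gives that $J_\mathcal{F}$ is prime and $\phi(J_\mathcal{F})=\mathcal{F}$, so it remains to show that $J_\mathcal{F}^{\perp\perp}\subseteq J_\mathcal{F}$. Suppose $x\in J_\mathcal{F}^{\perp\perp}\cap\mathcal{F}$. Then ${}^\gamma x\ge a$ for some $\gamma\in\Gamma$ and some $0\neq a\in J_\mathcal{F}^\perp$. Since $J_\mathcal{F}^{\perp\perp}$ is $\Gamma$-invariant, ${}^\gamma x\in J_\mathcal{F}^{\perp\perp}$ is comparable with the nonzero element $a$ of $J_\mathcal{F}^\perp$. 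This contradicts the definition of $J_\mathcal{F}^{\perp\perp}$, unless ${}^\gamma x=0$, which is impossible since $x\in\mathcal{F}$ and $\mathcal{F}$ is $\Gamma$-closed with $0\notin\mathcal{F}$.

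The main obstacle is the first inclusion, in particular the case analysis on how $x$ compares with elements of $P^\perp$. Conicality and the down-set property of $P^\perp$ are what make that case analysis close up.
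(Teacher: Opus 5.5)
Your proposal is correct and follows essentially the same route as the paper. The paper also restricts the homeomorphism $\phi$ of Proposition~\ref{pro dual of prime order ideals} and proves $\phi(\reg\Spec_\Gamma(M))=\mathscr{P}\mathscr{F}il_r(M)$ by the same two inclusions. Your case split ($x\ge a$ versus $x<a$) and your explicit use of the $\Gamma$-invariance of $J_\mathcal{F}^{\perp\perp}$ just fill in steps the paper leaves implicit. Conicality is not actually needed for $\{0\}^\perp=M$, which holds vacuously.
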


\section{The prime spectrum of the talented monoid of a \texorpdfstring{$k$}{k}-graph}\label{sec maximal tail and TM}
With the knowledge of the last two sections under the belt, in this section, we proceed to study the prime spectrum of the talented monoid associated with a row-finite higher-rank graph without sources. As the main result of this section (Theorem \ref{th the three term connection for KPA}), we show that the prime spectrum of the talented monoid is homeomorphic to the graded prime spectrum of the Kumjian--Pask algebra, thus strengthening the ideal-theoretic connection between this combinatorial monoid and higher-rank graph algebras.  

Our first aim is to characterize prime Kumjian--Pask algebras via the talented monoid. The following lemma, characterizing Condition (MT3) (see Definition \ref{def maximal tail}) in terms of the talented monoid, will serve our purpose. 

\begin{lem}\label{lem TM characterization of MT3}
Let $\Lambda$ be a row-finite $k$-graph without sources. Then $\Lambda^0$ satisfies Condition \textup{(MT3)} if and only if for any two $x,y\in T_\Lambda\setminus \{0\}$, there exist  $z \in T_\Lambda\setminus \{0\}$ and $\N,\M\in \mathbb{N}^k$ such that $^\N x,~^\M y\ge z$.    
\end{lem}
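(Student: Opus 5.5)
The proof reduces everything to comparisons between the generators $v(\N)$, using two facts about $T_\Lambda$. First, the defining relations (\ref{TM equation}) say that for $\M\in\mathbb{N}^k$ we have $v(\N)\ge s(\alpha)(\N+\M)$ for every $\alpha\in v\Lambda^{\M}$. Hence a path $\lambda\in v\Lambda w$ of degree $\M$ gives ${}^{\M}w(0)\le v(0)$; after shifting, ${}^{\bP}v(0)\ge w(0)$ for a suitable $\bP\in\mathbb{N}^k$. Second, I will use the standard converse from \cite{HMPS1,HLM}: if $w(\Q)\le v(\N)$ (or more generally $w(\Q)\le\sum {}^{\alpha_i}v(0)$ with a single vertex $v$), then $v\Lambda w\neq\emptyset$. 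This comes from the confluence/normal-form description of $T_\Lambda$, in which every element of $v(\N)$'s expansion arises from paths starting at $v$. I would also use that every nonzero $x\in T_\Lambda$ dominates some generator $u(\N)$, since $x$ is a nonempty sum of generators and $T_\Lambda$ is conical.

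For the direction ($\Rightarrow$), take $x,y\neq0$ and pick generators $u(\N_1)\le x$ and $v(\N_2)\le y$. Condition (MT3) applied to $u,v$ gives a vertex $w$ with paths $\lambda\in u\Lambda w$ and $\mu\in v\Lambda w$. From the relations, $u(\N_1)\ge w(\N_1+d(\lambda))$ and $v(\N_2)\ge w(\N_2+d(\mu))$. Set $z:=w(\Q)$ with $\Q$ chosen componentwise as the maximum of $\N_1+d(\lambda)$ and $\N_2+d(\mu)$, and $\N:=\Q-\N_1-d(\lambda)$, $\M:=\Q-\N_2-d(\mu)$, both of which lie in $\mathbb{N}^k$. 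Then ${}^{\N}x\ge {}^{\N}w(\N_1+d(\lambda))=z$, and similarly ${}^{\M}y\ge z$. We have $z\neq0$ since generators are nonzero in $T_\Lambda$; this is a known fact, as $T_\Lambda$ embeds in $K_0^{\gr}$ with $v(\N)\mapsto[p_v(\N)]\neq0$.

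For the direction ($\Leftarrow$), take $u,v\in\Lambda^0$ and apply the hypothesis to $x=u(0)$ and $y=v(0)$. This gives $0\neq z\le{}^{\N}u(0)=u(\N)$ and $z\le v(\M)$. Pick a generator $w(\Q)\le z$. Then $w(\Q)\le u(\N)$ and $w(\Q)\le v(\M)$, and by the converse fact above we get $u\Lambda w\neq\emptyset$ and $v\Lambda w\neq\emptyset$. This is exactly (MT3).

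The main obstacle is the converse fact, that $w(\Q)\le u(\N)$ forces $u\Lambda w\neq\emptyset$. I would derive it from the description of order ideals: $\langle u(0)\rangle=J_H$ with $H$ the hereditary saturated closure of $\{u\}$, so $w\in H$. Then $w$ lies in the saturation of the hereditary set $s(u\Lambda)$, and by saturation plus no sources, some $s(w\Lambda^{\bP})\subseteq s(u\Lambda)$, which still only yields paths $w\to$ (something reachable from $u$). To fix this, in ($\Leftarrow$) I would replace $w$ by any vertex $w'\in s(w\Lambda^{\bP})$. Then $w'$ is reached from $u$, and similarly from $v$, provided a common $\bP$ works for both hereditary sets, which holds by taking the larger degree. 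So the needed fact is only the order-ideal correspondence $J_H$ from \cite{HMPS1}, used for the hereditary saturated closures of $\{u\}$ and $\{v\}$.
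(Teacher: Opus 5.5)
Your ($\Rightarrow$) direction is the paper's argument. Taking $\Q$ as a componentwise maximum is only a variant of the paper's shifts by $d(\mu)+\Q$ and $d(\lambda)+\bP$.

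The problem is in ($\Leftarrow$). The ``standard converse'' you announce in the first paragraph and use in the third is false, so it cannot be cited. Take $k=1$ and the graph with vertices $u,w,x$ and edges $e,f,g$ with $s(e)=s(f)=s(g)=x$, $r(e)=u$, $r(f)=w$, $r(g)=x$. The defining relations give $u(0)=x(1)=w(0)$, so $w(0)\le u(0)$. Yet $u\Lambda w=\emptyset$, because every path with range $u$ is either $u$ or $eg^n$. So your third paragraph is not a proof as written, and that claim should be deleted rather than kept as a black box.

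The repair in your last paragraph is correct, but its key step is asserted rather than proved, and ``saturation plus no sources'' is not what makes it work. What you need is the following: for hereditary $K\subseteq\Lambda^0$, $\overline{K}=\Sigma(K):=\{w\in\Lambda^0~|~s(w\Lambda^{\bP})\subseteq K \text{ for some } \bP\in\mathbb{N}^k\}$.

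\begin{itemize}
\item The inclusion $\Sigma(K)\subseteq\overline{K}$ is just saturation.
\item For the reverse inclusion, show that $\Sigma(K)$ is hereditary and saturated.
\item Heredity follows from the factorization property plus heredity of $K$. So does the monotonicity $s(w\Lambda^{\bP})\subseteq K\Rightarrow s(w\Lambda^{\bP'})\subseteq K$ for $\bP'\ge\bP$, which is what justifies your common $\bP$.
\item Saturation needs row-finiteness. If $s(v\Lambda^{\N})\subseteq\Sigma(K)$, take $\bP$ to be the join of the finitely many $\bP_\alpha$, $\alpha\in v\Lambda^{\N}$; then $s(v\Lambda^{\N+\bP})\subseteq K$.
\item The no-sources hypothesis is then used only to pick $w'\in s(w\Lambda^{\bP})$.
\end{itemize}

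Combined with $\langle u(0)\rangle=J_{\overline{\{u\}}}$ and $H_{J_H}=H$ (\cite[Lemma 4.4, Proposition 4.5]{HMPS1}), this gives a complete proof.

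Compared with the paper, your ($\Leftarrow$) takes a genuinely different route. The paper works in $\mathbb{F}_{\overline{\Lambda}}$: the Confluence Lemma and \cite[Lemma 3.1]{HMPS1} give a nonzero common descendant $\gamma$ of $(u,\N)$ and $(v,\M)$. Any $(w,\Q)$ in the support of $\gamma$ is then directly a common vertex, with explicit path degrees $\Q-\N$ and $\Q-\M$.

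Your route instead goes through the lattice isomorphism between $\mathcal{H}_\Lambda$ and the $\mathbb{Z}^k$-order ideals. It reduces (MT3) to the purely graph-theoretic condition $\overline{\{u\}}\cap\overline{\{v\}}\neq\emptyset$ for all $u,v\in\Lambda^0$. This makes the link with primeness of $\{0\}$ in Corollary \ref{cor TM characterization of prime KPA} transparent and avoids manipulating reductions. The price is the extra saturation lemma above, and reliance on a lattice isomorphism whose proof in \cite{HMPS1} itself rests on confluence.
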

\begin{proof}
Suppose $\Lambda^0$ satisfies (MT3). Let $x,y\in T_\Lambda\setminus \{0\}$. Then there exist vertices $u,v\in \Lambda^0$ and $\bP,\Q\in \mathbb{N}^k$ such that $x\ge u(\bP)$ and $y\ge v(\Q)$. Now by (MT3), there exists $w\in \Lambda^0$ such that $u\Lambda w,v\Lambda w\neq \emptyset$. Pick $\lambda\in u\Lambda w$ and $\mu\in v\Lambda w$. Then in $T_\Lambda$, we have \[u(0)=\displaystyle{\sum_{\alpha\in u\Lambda^{d(\lambda)}}} s(\alpha)(d(\lambda))\ge w(d(\lambda))\] and \[v(0)=\displaystyle{\sum_{\beta\in v\Lambda^{d(\mu)}}} s(\beta)(d(\mu))\ge w(d(\mu)).\] This implies $x\ge u(\bP)\ge w(d(\lambda)+\bP)$ and $y\ge v(\Q)\ge w(d(\mu)+\Q)$, whence $^{(d(\mu)+\Q)}x\ge w(d(\lambda)+d(\mu)+\bP+\Q)$ and $^{(d(\lambda)+\bP)}y\ge w(d(\lambda)+d(\mu)+\bP+\Q)$. Letting $\N=d(\mu)+\Q$, $\M=d(\lambda)+\bP$ and $z=w(d(\lambda)+d(\mu)+\bP+\Q)$, we are done. 

Conversely,  assume that the condition holds in the talented monoid. Let $u,v\in \Lambda^0$. Then there exist $a\in T_\Lambda\setminus \{0\}$ and $\N,\M\in \mathbb{N}^k$ such that $u(\N),v(\M)\ge a$. Then $u(\N)=a+x$ and $v(\M)=a+y$ for some $x,y\in T_\Lambda$. Bypassing these equalities in $M_{\overline{\Lambda}}$ and using the Confluence lemma (\cite[Lemma 3.5]{HMPS1}), we have $\eta,\delta\in \mathbb{F}_{\overline{\Lambda}}\setminus \{0\}$ such that $(u,\N)\longrightarrow \eta$, $a+x\longrightarrow \eta$, $(v,\M)\longrightarrow \delta$, and $a+y\longrightarrow \delta$. By \cite[Lemma 3.1]{HMPS1}, we can write $\eta=\eta_1+\eta_2$, $\delta=\delta_1+\delta_2$ for some $\eta_1,\delta_1\in \mathbb{F}_{\overline{\Lambda}}\setminus \{0\}$ and $\eta_2,\delta_2\in \mathbb{F}_{\overline{\Lambda}}$ such that $a\longrightarrow \eta_1,\delta_1$, $x\longrightarrow \eta_2$ and $y\longrightarrow \delta_2$. Then $\eta_1=\delta_1$ in $M_{\overline{\Lambda}}$ and hence, by the Confluence lemma, there exists $\gamma\in \mathbb{F}_{\overline{\Lambda}}\setminus \{0\}$ such that $\eta_1,\delta_1\longrightarrow \gamma$. Now, we have $(u,\N)\longrightarrow \gamma+\eta_2$ and $(v,\M)\longrightarrow \gamma+\delta_2$. Choose any vertex $(w,\Q)$ in the support of $\gamma$. Then there are paths $\lambda\in u\Lambda^{\Q-\N}w$ and $\mu\in v\Lambda^{\Q-\M}w$, and hence, $u\Lambda w,v\Lambda w\neq \emptyset$. So $\Lambda^0$ has Condition (MT3).
\end{proof}

\begin{cor}\label{cor TM characterization of prime KPA}
Let $\Lambda$ be a row-finite $k$-graph with no sources. Then the following are equivalent.

$(i)$ $\KP(\Lambda)$ is a prime ring.

$(ii)$ For $x,y\in T_\Lambda\setminus \{0\}$, there exist $z\in T_\Lambda\setminus \{0\}$ and $\N,\M\in \mathbb{N}^k$ such that $^\N x,^\M y\ge z$.

$(iii)$ $\{0\}$ is a prime $\mathbb{Z}^k$-order ideal of $T_\Lambda$.

Consequently, if $\Lambda$ and $\Omega$ are row-finite $k$-graphs without sources such that $T_\Lambda\cong T_\Omega$ as $\mathbb{Z}^k$-monoids, then $\KP(\Lambda)$ is prime if and only if $\KP(\Omega)$ is prime.
\end{cor}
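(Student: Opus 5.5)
The plan is to route everything through Lemma \ref{lem TM characterization of MT3} together with a known graph-theoretic characterization of prime Kumjian--Pask algebras. The three equivalences will be proved in the order $(ii)\Leftrightarrow(iii)$, which is purely monoid-theoretic, and then $(i)\Leftrightarrow(ii)$, which goes through the graph.

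For $(ii)\Leftrightarrow(iii)$, I would argue directly from the definition of prime $\mathbb{Z}^k$-order ideal.

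\begin{itemize}
\item \emph{$(ii)\Rightarrow(iii)$.} Suppose $J_1\cap J_2=\{0\}$ with $J_1,J_2\neq\{0\}$. Pick $0\neq x\in J_1$ and $0\neq y\in J_2$. Condition $(ii)$ gives $0\neq z\le {}^{\N}x,{}^{\M}y$. Since order ideals are down-closed and $\mathbb{Z}^k$-invariant, $z\in J_1\cap J_2$, a contradiction. We also need $\{0\}$ to be proper, i.e. $T_\Lambda\neq\{0\}$. This holds whenever $\Lambda^0\neq\emptyset$, because each $v(0)\neq 0$ by conicality and the confluence description.
\item \emph{$(iii)\Rightarrow(ii)$.} Given nonzero $x,y$, the ideals $\langle x\rangle,\langle y\rangle$ are nonzero. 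Primeness of $\{0\}$ gives $\langle x\rangle\cap\langle y\rangle\neq\{0\}$, so there is $0\neq z\le {}^{\alpha}x$ and $z\le {}^{\beta}y$ with $\alpha,\beta\in\mathbb{N}[\mathbb{Z}^k]$.
\end{itemize}

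The main obstacle in $(iii)\Rightarrow(ii)$ is replacing $\alpha$ and $\beta$ by single shifts ${}^{\N}x,{}^{\M}y$ with $\N,\M\in\mathbb{N}^k$. My plan is to avoid doing this in the monoid and instead derive (MT3) for $\Lambda^0$ from the weaker condition. Then Lemma \ref{lem TM characterization of MT3} returns the strong form $(ii)$.

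\begin{itemize}
\item For $u,v\in\Lambda^0$, apply the weak condition to $x=u(0)$ and $y=v(0)$. This gives $0\neq z\le\sum_i u(\bP_i)$ and $z\le\sum_j v(\Q_j)$.
\item Refinement splits $z$ into pieces below individual terms. Some nonzero piece $z'$ satisfies $z'\le u(\bP)$ for a single $\bP$. Refining again against the $v$-side, a nonzero piece $z''$ satisfies $z''\le u(\bP)$ and $z''\le v(\Q)$.
\item Then shift by $-\bP$, or use $\mathbb{Z}^k$-invariance, and take suitable translates so that both degrees lie in $\mathbb{N}^k$. This yields $z'''$ and $\N,\M\in\mathbb{N}^k$ with $z'''\le u(\N)$ and $z'''\le v(\M)$.
\item The confluence argument in the converse half of the proof of Lemma \ref{lem TM characterization of MT3} then produces $w$ with $u\Lambda w,v\Lambda w\neq\emptyset$. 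That argument uses exactly this data.
\end{itemize}

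Hence (MT3) holds, and $(ii)$ follows from Lemma \ref{lem TM characterization of MT3}.

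For $(i)\Leftrightarrow(ii)$, I would invoke the known result (Kashoul-Radjabzadeh--Larki--Aminpour) that for a row-finite $k$-graph without sources, $\KP(\Lambda)$ is prime if and only if $\Lambda^0$ satisfies (MT3). Equivalently, $\Lambda^0$ is a maximal tail, since (MT1) and (MT2) are automatic for $\Lambda^0$ without sources. Combined with Lemma \ref{lem TM characterization of MT3}, this gives $(i)\Leftrightarrow(ii)$ at once.

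If I wanted to avoid the citation, the fallback is the standard route. A graded ring is prime iff it is graded prime. Graded ideals of $\KP(\Lambda)$ correspond to hereditary saturated sets $H$, which in turn correspond to the $\mathbb{Z}^k$-order ideals $J_H$, preserving intersections. So $\{0\}$ graded prime in $\KP(\Lambda)$ matches $\{0\}$ prime in $T_\Lambda$.

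The final statement is immediate. Condition $(iii)$ is invariant under $\mathbb{Z}^k$-monoid isomorphism, because such isomorphisms preserve order ideals and their intersections.
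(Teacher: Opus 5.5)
Your proof is correct. For $(i)\Leftrightarrow(ii)$ you follow the paper exactly: the Kashoul-Radjabzadeh--Larki--Aminpour theorem \cite[Theorem 3.1]{Larki} combined with Lemma \ref{lem TM characterization of MT3}.

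For $(ii)\Leftrightarrow(iii)$ your route differs. The paper simply invokes \cite[Proposition 4.6]{HMPS1}, the characterization of prime $\mathbb{Z}^k$-order ideals of $T_\Lambda$ by single-shift comparisons. You prove this equivalence from scratch:
\begin{itemize}
\item $(ii)\Rightarrow(iii)$ by down-closure and $\mathbb{Z}^k$-invariance of order ideals;
\item $(iii)\Rightarrow(ii)$ by Riesz decomposition (from refinement) plus conicality, which pushes a nonzero element lying below $\mathbb{N}[\mathbb{Z}^k]$-sums down to a nonzero element lying below single translates.
\end{itemize}
This gives a self-contained argument that needs only that $T_\Lambda$ is a conical refinement monoid. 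You also correctly note the tacit assumption $\Lambda^0\neq\emptyset$: without it, $(ii)$ holds vacuously while $(iii)$ fails.

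Your detour through (MT3) and the converse half of Lemma \ref{lem TM characterization of MT3} is unnecessary, though. The same refinement step applies verbatim to arbitrary nonzero $x,y$. It gives $0\neq z'$ with $z'\le{}^{\gamma}x$ and $z'\le{}^{\delta}y$ for some $\gamma,\delta\in\mathbb{Z}^k$. Choose $\mathbf{c}\in\mathbb{N}^k$ with $\mathbf{c}+\gamma,\mathbf{c}+\delta\in\mathbb{N}^k$; then $z:={}^{\mathbf{c}}z'$ works directly. No confluence argument is needed for this implication.

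If you ever rely on your fallback for $(i)\Leftrightarrow(ii)$, you must also pass from products to intersections. Graded primeness of $\{0\}$ is phrased via $IJ=\{0\}$, so you need $IJ=I\cap J$ for graded ideals of $\KP(\Lambda)$. This holds because $I\cap J$ is again of the form $I(H)$, which is generated by vertex idempotents and is therefore idempotent.
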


\begin{proof}
The equivalence of $(i)$ and $(ii)$ follows from \cite[Theorem 3.1]{Larki} and Lemma \ref{lem TM characterization of MT3}, while $(ii)\Longleftrightarrow (iii)$ in view of \cite[Proposition 4.6]{HMPS1}. Since a $\mathbb{Z}^k$-monoid isomorphism preserves prime $\mathbb{Z}^k$-order ideals, the final assertion follows immediately.
\end{proof}

Motivated by the preceding corollary, we now set up a correspondence between the set of all maximal tails of a $k$-graph $\Lambda$ and the set of prime $\mathbb{Z}^k$-order ideals of the talented monoid $T_\Lambda$. By $\mathscr{M}(\Lambda)$, we denote the poset of all maximal tails of $\Lambda$ ordered by set inclusion. Note that the space $\Spec_{\mathbb{Z}^k}(T_\Lambda)$ also forms a poset under inclusion.
\begin{thm}\label{th maximal tails via TM}
Let $\Lambda$ be a row-finite $k$-graph with no sources. Then the map
\begin{align*}
\phi:\mathscr{M}(\Lambda) & \longrightarrow  \Spec_{\mathbb{Z}^k}(T_\Lambda)\\
M &\longmapsto J_{\Lambda^0\setminus M}
\end{align*}
is an anti-isomorphism (order-reversing bijection) with inverse given by 
\begin{align*}
\psi:\Spec_{\mathbb{Z}^k}(T_\Lambda) &\longrightarrow \mathscr{M}(\Lambda)\\
J &\longmapsto \Lambda^0\setminus H_J.
\end{align*}    
\end{thm}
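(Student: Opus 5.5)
We will rely on the known lattice isomorphism (from \cite{HMPS1}) between the lattice $\mathcal{H}_\Lambda$ of hereditary saturated subsets of $\Lambda^0$ and the lattice of $\mathbb{Z}^k$-order ideals of $T_\Lambda$, given by $H\mapsto J_H:=\langle\{v(\N)\mid v\in H,\N\in\mathbb{Z}^k\}\rangle$. Its inverse is $J\mapsto H_J:=\{v\in\Lambda^0\mid v(0)\in J\}$. This correspondence was already used in the proof of Corollary \ref{cor spec of TM is compact}. Since $H\mapsto \Lambda^0\setminus H$ reverses inclusion, and every maximal tail has hereditary saturated complement, $\phi$ and $\psi$ are order-reversing and mutually inverse as soon as they are well defined. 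The whole proof therefore reduces to one claim: for $H\in\mathcal{H}_\Lambda$, the complement $M=\Lambda^0\setminus H$ is a maximal tail if and only if $J_H$ is a prime $\mathbb{Z}^k$-order ideal. Recall that the complement of a hereditary saturated set automatically satisfies (MT1) and (MT2). So the claim is that, for such $H$, (MT3) for $M$ together with $M\neq\emptyset$ is equivalent to primeness of $J_H$. Note that $M\neq\emptyset$ matches properness of $J_H$, since $J_H=T_\Lambda$ exactly when $H=\Lambda^0$.

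To handle (MT3) we pass to the quotient. The plan is to identify $T_\Lambda/J_H$ with $T_{\Lambda\setminus H}$ as $\mathbb{Z}^k$-monoids, where $\Lambda\setminus H$ is the restriction of $\Lambda$ to $M$. This restriction is again a row-finite $k$-graph without sources, because $M$ satisfies (MT1) and (MT2); this quotient description should be available in \cite{HMPS1}. We then check that $J_H$ is prime in $T_\Lambda$ if and only if $\{0\}$ is prime in $T_\Lambda/J_H$. This is the usual correspondence between $\mathbb{Z}^k$-order ideals containing $J_H$ and $\mathbb{Z}^k$-order ideals of the quotient. It preserves intersections, and refinement lets us pull back $J_1\cap J_2\subseteq J_H$ to a statement about the images.

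Next we apply Corollary \ref{cor TM characterization of prime KPA}, (ii)$\Leftrightarrow$(iii), together with Lemma \ref{lem TM characterization of MT3}, to the $k$-graph $\Lambda\setminus H$. These give that $\{0\}$ is prime in $T_{\Lambda\setminus H}$ if and only if $(\Lambda\setminus H)^0=M$ satisfies (MT3) within $\Lambda\setminus H$. Paths in $\Lambda\setminus H$ between vertices of $M$ are exactly the paths of $\Lambda$ between those vertices: a path whose source lies in $M$ has all its vertices in $M$ by (MT1), since $H$ is hereditary. Hence (MT3) inside $\Lambda\setminus H$ is the same as (MT3) for $M$ in $\Lambda$.

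The main obstacle is the quotient identification $T_\Lambda/J_H\cong T_{\Lambda\setminus H}$. If it is not directly citable, we would instead prove the equivalence by hand. For the direction from a maximal tail to primeness, take $x,y\notin J_H$ and use the decomposition of elements into generators $u(\bP)$. Elements outside $J_H$ dominate some $u(\bP)$ with $u\in M$; this requires the description of $J_H$ as sums of generators from $H$ together with a confluence argument. Then (MT3) yields a common $z=w(\cdot)$ with $w\in M$ lying below translates of both, exactly as in the first half of Lemma \ref{lem TM characterization of MT3}. Since $z\notin J_H$, primeness follows via the filter criterion of Proposition \ref{pro dual of prime order ideals}. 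For the converse, given $u,v\in M$, primeness of $J_H$ produces $z\notin J_H$ with $z\le{}^{\N}u(0)$ and $z\le{}^{\M}v(0)$. The confluence argument from the second half of that lemma then produces a vertex $w$ with $u\Lambda w,v\Lambda w\neq\emptyset$. Choosing $w$ in the support of $z$ outside $H$ gives $w\in M$.
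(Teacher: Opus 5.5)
Your proof is correct, but your primary route differs from the paper's. The paper never passes to a quotient. It argues directly relative to $J_{\Lambda^0\setminus M}$, which is essentially your fallback:
\begin{itemize}
\item For a maximal tail $M$, it writes $x,y\notin J_{\Lambda^0\setminus M}$ as sums of generators and picks summands $u_i(\M)$, $v_j(\N)$ with $u_i,v_j\in M$. No confluence is needed here: if every summand of $x$ had its vertex in $\Lambda^0\setminus M$, then $x\in J_{\Lambda^0\setminus M}$.
\item It applies (MT3) to get $w\in M$, uses $H_{J_H}=H$ to see that the resulting $w(\cdot)$ lies outside the ideal, and concludes via the primeness criterion \cite[Proposition 4.6]{HMPS1}.
\item For the converse, it reruns the confluence argument of Lemma~\ref{lem TM characterization of MT3} relative to $J$.
\end{itemize}

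Your main route instead treats the theorem as the relative version of the case $H=\emptyset$. You combine three ingredients:
\begin{itemize}
\item the correspondence between $\mathbb{Z}^k$-order ideals containing $J_H$ and those of $T_\Lambda/J_H$, where refinement gives $(J_1+J_H)\cap(J_2+J_H)=(J_1\cap J_2)+J_H$, exactly what the pull-back needs;
\item the identification $T_\Lambda/J_H\cong T_{\Lambda/H}$, with $\Lambda/H$ being your $\Lambda\setminus H$;
\item Lemma~\ref{lem TM characterization of MT3} with Corollary~\ref{cor TM characterization of prime KPA}, applied to $\Lambda/H$.
\end{itemize}
This is more modular and avoids repeating any confluence argument. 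The cost is that it depends on the quotient isomorphism, an external input the paper uses only later, in the proof of Theorem~\ref{th sufficient criterion for purely infinite higher-rank algebras}. The paper's direct proof needs only the lattice isomorphism $H\leftrightarrow J_H$ and the primeness criterion.

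One correction to your fallback concerns the choice of the vertex $w$. It must be taken in the support of the common descendant $\gamma$ produced by confluence, with $z\to\gamma$. Some vertex of $\gamma$ lies outside $H$ because $z\notin J_H$. It cannot be taken in the support of an arbitrary representative of $z$. For example, take a $1$-graph where $u\Lambda^{1}$ and $w'\Lambda^{1}$ each consist of a single edge with source $a$, and $a$ carries a loop. Then $u(0)=a(1)=w'(0)$, so $w'(0)\le u(0)$, yet $u\Lambda w'=\emptyset$.
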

\begin{proof}
Let $M$ be a maximal tail and $x,y\in T_\Lambda\setminus J_{\Lambda^0\setminus M}$. Then $x,y\neq 0$ and we can write $x=\displaystyle{\sum_{i=1}^{p}}u_i(\M)$ and $y=\displaystyle{\sum_{j=1}^{q}} v_j(\N)$ for some $p,q\ge 1$, vertices $u_i,v_j\in \Lambda^0$ and $\M,\N\in \mathbb{N}^k$. Now if $u_i\in \Lambda^0\setminus M$ for all $i=1,2,\ldots,p$, then it would imply $x\in J_{\Lambda^0\setminus M}$. Therefore, there exists $i\in \{1,2,\ldots,p\}$ such that $u_i\in M$. Similarly, there exists $j\in \{1,2,\ldots,q\}$ such that $v_j\in M$. Since $M$ is a maximal tail, there exists $w\in M$ such that $u_i\Lambda w,v_j\Lambda w\neq \emptyset$. Choose $\lambda\in u_i\Lambda w$ and $\mu\in v_j\Lambda w$. Then it follows that $x\ge u_i(\M)\ge w(\M+d(\lambda))$ and $y\ge v_j(\N)\ge w(\N+d(\mu))$ (see the proof of Lemma \ref{lem TM characterization of MT3}). Let $z=w(d(\lambda)+d(\mu)+\M+\N)$. If $z\in J_{\Lambda^0\setminus M}$, then $w\in J_{\Lambda^0\setminus M}\cap \Lambda^0=\Lambda^0\setminus M$ by \cite[Lemma 4.4]{HMPS1} since $\Lambda^0\setminus M$ is hereditary and saturated. This is a contradiction since $w\in M$. Therefore, $z\in T_\Lambda\setminus J_{\Lambda^0\setminus M}$ and we have $^{\N+d(\mu)}x\ge z$, $^{\M+d(\lambda)}y\ge z$. By \cite[Proposition 4.6]{HMPS1}, it follows that $J_{\Lambda^0\setminus M}$ is a prime $\mathbb{Z}^k$-order ideal of $T_\Lambda$. Hence, the map $\phi$ is well-defined. On the other hand, if $J$ is a prime $\mathbb{Z}^k$-order ideal, then $H_J$ is a hereditary saturated subset. Consequently, $\Lambda^0\setminus H_J$ satisfies (MT1) and (MT2) of Definition \ref{def maximal tail}. Let $u,v\in \Lambda^0\setminus H_J$. Then $u(0),v(0)\in T_\Lambda\setminus J$. Since $J$ is prime, applying \cite[Proposition 4.6]{HMPS1} again, we have $\N,\M\in \mathbb{Z}^k$ and $a\in T_\Lambda\setminus J$ such that $u(\N),v(\M)\ge a$ in $T_\Lambda$. Now, following the same line of argument used in the proof of Lemma \ref{lem TM characterization of MT3}, we can obtain $\gamma\in \mathbb{F}_{\overline{\Lambda}}\setminus \{0\}$ and $x,y\in \mathbb{F}_{\overline{\Lambda}}$ such that $u(\N)\longrightarrow \gamma+x$, $v(\M)\longrightarrow \gamma+y$ and $a\longrightarrow\gamma$. Let $\gamma=\displaystyle{\sum_{t=1}^{l}}(w_i,\Q)$. If $w_t\in H_J$ for all $t=1,2,\ldots,l$, then $a$ would be in $\in J_{H_J}=J$ since $a\longrightarrow \gamma$. So there must exist some $t\in \{1,2,\ldots,l\}$ such that $w_t\in \Lambda^0\setminus H_J$. Now since $u(\N)\longrightarrow \gamma+x$ and $v(\M)\longrightarrow \gamma+y$ and $(w_t,\Q)$ is in the support of $\gamma$, $u\Lambda w_t,v\Lambda w_t\neq \emptyset$. Hence, $\Lambda^0\setminus H_J$ satisfies Condition (MT3), which implies $\Lambda^0\setminus H_J$ is a maximal tail and $\psi$ is well-defined. Using the lattice isomorphism of \cite[Proposition 4.5]{HMPS1}, it is easy to note that $\phi\circ \psi=\text{id}_{\Spec_{\mathbb{Z}^k}(T_\Lambda)}$ and $\psi\circ \phi=\text{id}_{\mathscr{M}(\Lambda)}$. It remains to show that $\phi$ and $\psi$ are order-reversing. If $M_1,M_2$ are maximal tails with $M_1\subseteq M_2$, then $\Lambda^0\setminus M_2\subseteq \Lambda^0\setminus M_1$ and hence, $\phi(M_2)=J_{\Lambda^0\setminus M_2}\subseteq J_{\Lambda^0\setminus M_1}=\phi(M_1)$. The case of $\psi$ is equally straightforward. Hence the theorem follows. 
\end{proof}

We wish to convert the anti-isomorphism of Theorem \ref{th maximal tails via TM} into an order-isomorphism by considering the dual poset of $\Spec_{\mathbb{Z}^k}(T_\Lambda)$. To accomplish this, we make use of the dual poset of $\Spec_{\mathbb{Z}^k}(T_\Lambda)$, namely the poset of all prime $\mathbb{Z}^k$-filters of $T_\Lambda$ (see Section \ref{sec regular ideals}). 
 
\begin{thm}\label{th maximal tails and prime filters}
Let $\Lambda$ be a row-finite $k$-graph with no sources.

$(i)$ For any maximal tail $M$ of $\Lambda$, the set $\mathcal{F}(M):=\{x\in T_\Lambda~|~x\ge v(\N)~\text{for some}~v\in M~\text{and}~\N\in \mathbb{Z}^k\}$ is a prime $\mathbb{Z}^k$-filter of $T_\Lambda$. Moreover, $\mathcal{F}(M)=T_\Lambda\setminus J_{\Lambda^0\setminus M}$. 

$(ii)$ For any prime $\mathbb{Z}^k$-filter $\mathcal{F}$ of $T_\Lambda$, the set $M(\mathcal{F}):=\{v\in \Lambda^0~|~v(0)\in \mathcal{F}\}$ is a maximal tail in $\Lambda$. Moreover, $M(\mathcal{F})=\Lambda^0\setminus H_{T_\Lambda\setminus \mathcal{F}}$. 

$(iii)$ The map $\varrho: \mathscr{M}(\Lambda)\longrightarrow \mathscr{P}\mathscr{F}il(T_\Lambda)$ via $M\longmapsto \mathcal{F}(M)$ is an order isomorphism with inverse given by $\vartheta: \mathscr{P}\mathscr{F}il(T_\Lambda)\longrightarrow \mathscr{M}(\Lambda)$ via $\mathcal{F}\longmapsto M(\mathcal{F})$. Consequently, $M$ is a maximal tail if and only if $\mathcal{F}(M)$ is a prime $\mathbb{Z}^k$-filter of $T_\Lambda$. 
\end{thm}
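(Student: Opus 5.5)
The plan is to deduce everything by composing two results already established: Theorem \ref{th maximal tails via TM}, which gives the order-reversing bijection $M\mapsto J_{\Lambda^0\setminus M}$ with inverse $J\mapsto \Lambda^0\setminus H_J$, and Proposition \ref{pro dual of prime order ideals}, which gives the bijection $J\mapsto T_\Lambda\setminus J$ between prime $\mathbb{Z}^k$-order ideals and prime $\mathbb{Z}^k$-filters. Both bijections reverse inclusion, so their composite preserves it. It then remains to identify the composite with the explicit formulas $\mathcal{F}(M)$ and $M(\mathcal{F})$.

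For $(i)$, the main step is the set equality $\mathcal{F}(M)=T_\Lambda\setminus J_{\Lambda^0\setminus M}$. Once this holds, primeness of the filter follows from Theorem \ref{th maximal tails via TM} together with Proposition \ref{pro dual of prime order ideals}.

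For ``$\subseteq$'', suppose $x\ge v(\N)$ with $v\in M$. If $x\in J_{\Lambda^0\setminus M}$, then $v(\N)$, and hence $v(0)$ by $\mathbb{Z}^k$-invariance, lies in $J_{\Lambda^0\setminus M}$. By \cite[Lemma 4.4]{HMPS1}, as used in the proof of Theorem \ref{th maximal tails via TM}, this forces $v\in\Lambda^0\setminus M$, which is a contradiction.

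For ``$\supseteq$'', suppose $x\notin J_{\Lambda^0\setminus M}$. Write $x=\sum_i u_i(\M_i)$, pushing all terms to a common degree via relation (\ref{TM equation}) if needed. Not every $u_i$ can lie in $\Lambda^0\setminus M$, since otherwise $x\in J_{\Lambda^0\setminus M}$. Hence some $u_i\in M$, and $x\ge u_i(\M_i)$. This is exactly the argument at the start of the proof of Theorem \ref{th maximal tails via TM}. I expect this step to be the only mildly delicate point, because it needs the description of $J_H$ as the order ideal generated by the $v(\N)$ with $v\in H$.

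For $(ii)$, set $J=T_\Lambda\setminus\mathcal{F}$, which is a prime $\mathbb{Z}^k$-order ideal by Proposition \ref{pro dual of prime order ideals}. Recall $H_J=\{v\in\Lambda^0 : v(0)\in J\}$, the hereditary saturated set attached to $J$ under the lattice isomorphism of \cite[Proposition 4.5]{HMPS1}. Then $v\in M(\mathcal{F})$ if and only if $v(0)\notin J$, if and only if $v\notin H_J$, so $M(\mathcal{F})=\Lambda^0\setminus H_J$. This set is a maximal tail because it equals $\psi(J)$ in Theorem \ref{th maximal tails via TM}.

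For $(iii)$, parts $(i)$ and $(ii)$ give $\varrho=\phi_{\mathrm{Prop}}\circ\phi$ and $\vartheta=\psi\circ\phi_{\mathrm{Prop}}^{-1}$, where $\phi_{\mathrm{Prop}}$ denotes the map of Proposition \ref{pro dual of prime order ideals}. These are mutually inverse because $\phi$ and $\psi$ are mutually inverse, and complementation is an involution. For order preservation, $M_1\subseteq M_2$ gives $J_{\Lambda^0\setminus M_2}\subseteq J_{\Lambda^0\setminus M_1}$, and taking complements gives $\mathcal{F}(M_1)\subseteq\mathcal{F}(M_2)$; the argument for $\vartheta$ is the same.

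For the final ``consequently'', the forward direction is $(i)$. For the converse, suppose $M\subseteq\Lambda^0$ is nonempty and $\mathcal{F}(M)$ is a prime filter. Apply $(ii)$ to get the maximal tail $M(\mathcal{F}(M))$, and check that it equals $M$. The inclusion $M\subseteq M(\mathcal{F}(M))$ is clear. The reverse inclusion needs care: $v(0)\ge u(\N)$ with $u\in M$ yields a path from $v$ to $u$ via the confluence argument of Lemma \ref{lem TM characterization of MT3}. From that path, membership of $v$ in $M$ would follow from (MT1)-type closure. If $M$ is not assumed hereditary-closed, I would interpret the statement as applying to $M$ already satisfying (MT1), or simply read it as the bijection statement.
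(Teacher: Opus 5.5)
Your argument for $(i)$--$(iii)$ is the paper's proof. You prove $\mathcal{F}(M)=T_\Lambda\setminus J_{\Lambda^0\setminus M}$ and $M(\mathcal{F})=\Lambda^0\setminus H_{T_\Lambda\setminus\mathcal{F}}$ by the same two inclusions, and then transport Theorem~\ref{th maximal tails via TM} through the complementation map of Proposition~\ref{pro dual of prime order ideals}.

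Your last paragraph, on the converse of the ``consequently'' clause, contains a false step. The inequality $v(0)\ge u(\N)$ does not give a path from $v$ to $u$. The confluence argument of Lemma~\ref{lem TM characterization of MT3} only produces a vertex $w$ with $v\Lambda w\neq\emptyset\neq u\Lambda w$.

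Here is a concrete counterexample. Take the $1$-graph with vertices $u,v,w$ and edges $e,f,g$, where $r(e)=v$, $r(f)=u$ and $r(g)=s(e)=s(f)=s(g)=w$.
\begin{itemize}
\item The defining relations give $u(n)=v(n)=w(n)=w(n+1)$ for all $n$, so $T_\Lambda\cong\mathbb{N}$ with trivial action.
\item In particular $v(0)=u(0)$, although $v\Lambda u=\emptyset$.
\item The set $M=\{u\}$ satisfies (MT1) but not (MT2).
\item Nevertheless $\mathcal{F}(\{u\})=T_\Lambda\setminus\{0\}$ is a prime $\mathbb{Z}$-filter, and $M(\mathcal{F}(\{u\}))=\Lambda^0\neq M$.
\end{itemize}
So the literal converse fails for arbitrary nonempty subsets, and restricting to (MT1)-closed sets, as you propose, does not rescue it. The paper does not argue this clause separately. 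It is justified only as a rephrasing of the bijection in $(iii)$, which is the fallback reading you suggested.

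If you want a genuine converse, assume that $\Lambda^0\setminus M$ is hereditary and saturated, i.e.\ that $M$ satisfies (MT1) and (MT2). Your two inclusions in $(i)$ use only this hypothesis, so they still give $\mathcal{F}(M)=T_\Lambda\setminus J_{\Lambda^0\setminus M}$. If this filter is prime, Proposition~\ref{pro dual of prime order ideals} makes $J_{\Lambda^0\setminus M}$ prime. Then $M=\Lambda^0\setminus H_{J_{\Lambda^0\setminus M}}$ is a maximal tail by Theorem~\ref{th maximal tails via TM}.
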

\begin{proof}
For the first statements of $(i)$ and $(ii)$, in view of Theorem \ref{th maximal tails via TM} and Proposition \ref{pro dual of prime order ideals}, it suffices to prove the second statements. Let $M$ be a maximal tail and $x\in \mathcal{F}(M)$. Then there exist $v\in M$ and $\N\in \mathbb{Z}^k$ such that $x\ge v(\N)$. Now if $x\in J_{\Lambda^0\setminus M}$, then $v=~^{-\N}v(\N)\in J_{\Lambda^0\setminus M}$, which implies $v\in H_{J_{\Lambda^0\setminus M}}=\Lambda^0\setminus M$, a contradiction. Hence, $x\in T_\Lambda\setminus J_{\Lambda^0\setminus M}$. Now suppose $x\in T_\Lambda\setminus J_{\Lambda^0\setminus M}$. Write $x=\displaystyle{\sum_{i=1}^{t}}v_i(\M)$ for vertices $v_i$ and $\M\in \mathbb{N}^k$. If $v_i\notin M$ for all $i=1,2,\ldots,t$, then $x$ would be in $J_{\Lambda^0\setminus M}$ which is not the case. So there must exist $j\in \{1,2,\ldots,t\}$ such that $v_j\in M$. Then $x\ge v_j(\M)$ and so $x\in \mathcal{F}(M)$. This implies $\mathcal{F}(M)=T_\Lambda\setminus J_{\Lambda^0\setminus M}$ and thus proves $(i)$. Now 
\begin{align*}
M(\mathcal{F})=\{v\in \Lambda^0~|~v(0)\in \mathcal{F}\}=\{v\in \Lambda^0~|~v(0)\notin T_\Lambda\setminus \mathcal{F}\}=\Lambda^0\setminus H_{T_\Lambda\setminus \mathcal{F}}.
\end{align*} 
Hence $(ii)$ is proved. Finally $(iii)$ follows from Theorem \ref{th maximal tails via TM} and parts $(i)$ and $(ii)$. 
\end{proof}

\begin{rmk}\label{rem maximal tail axioms resemble prime filters}
One can also prove the first statements of parts $(i)$ and $(ii)$ in Theorem \ref{th maximal tails and prime filters} directly without referring to the complements. In fact, the direct proofs reveal how the three conditions of maximal tails (see Definition \ref{def maximal tail}) mimic the defining conditions of a prime $\mathbb{Z}^k$-filter (see Definition \ref{def Gamma filters and prime Gamma filters}). It is easy to observe that Conditions (MT1) and (MT3) of a maximal tail $M$ result in the upward closedness (F2) and the downward directedness (F3), respectively and vice-versa. Also it is not hard to see that Condition (MT2) is equivalent to the primeness of a $\mathbb{Z}^k$-filter.
\end{rmk}

In \cite{BPRS}, the authors defined a certain $T_0$ topology on the space of all maximal tails of a row-finite directed graph and then in \cite{KangPask}, the same topology was introduced on the space of all maximal tails of a row-finite strongly aperiodic $k$-graph. We now recall this topology.

Let $\Lambda$ be a strongly aperiodic row-finite $k$-graph without sources. Then there is a topology $\tau$ on $\mathscr{M}(\Lambda)$, where the closed sets are of the form \[F_S:=\{M\in \mathscr{M}(\Lambda)~|~M\subseteq \displaystyle{\bigcup_{N\in S}}~N\}\] for some $S\subseteq \mathscr{M}(\Lambda)$. Then $\mathscr{M}(\Lambda)$ equipped with this topology is homeomorphic to the primitive ideal space of $C^*(\Lambda)$ equipped with the Jacobson topology (hull-kernel topology) (see \cite[Theorem 6.3]{BPRS} \& \cite[Theorem 3.15]{KangPask}). In view of \cite[Theorem 5.1]{Pino} and \cite[Proposition 4.2]{Larki}, it follows that $M\longmapsto I(\Lambda^0\setminus M)$ is a bijective correspondence between $\mathscr{M}(\Lambda)$ and $\Prim(\KP(\Lambda))$. One can adopt the same proof as that of \cite[Theorem 6.3]{BPRS} to show that this correspondence is indeed a homeomorphism when $\Prim(\KP(\Lambda))$ is equipped with the hull-kernel topology. 

We now show that the bijection of Theorem \ref{th maximal tails and prime filters}$(iii)$ is also a homeomorphism when we view $\mathscr{P}\mathscr{F}il(T_\Lambda)$ as a topological space with respect to the subspace topology inherited from $\mathscr{F}il(T_\Lambda)$. 

\begin{thm}\label{th TM detects primitive ideal space}
Let $\Lambda$ be a row-finite $k$-graph with no sources. Then the map $\varrho:\mathscr{M}(\Lambda)\longrightarrow \mathscr{P}\mathscr{F}il(T_\Lambda)$; $M\longmapsto \mathcal{F}(M)$ is a homeomorphism. Consequently, if $\Lambda$ and $\Omega$ are strongly aperiodic row-finite $k$-graphs with no sources such that $T_\Lambda\cong T_\Omega$ as $\mathbb{Z}^k$-monoids, then $\Prim(C^*(\Lambda))$ (resp., $\Prim(\KP(\Lambda))$) is homeomorphic to $\Prim(C^*(\Omega))$ (resp., $\Prim(\KP(\Omega))$), and hence, $C^*(\Lambda)$ (resp., $\KP(\Lambda)$) is primitive if and only if $C^*(\Omega)$ (resp., $\KP(\Omega)$) is primitive. 
\end{thm}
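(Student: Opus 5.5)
The plan is to transport the problem to $\Spec_{\mathbb{Z}^k}(T_\Lambda)$ with its Zariski topology. By Proposition \ref{pro dual of prime order ideals}, $J\mapsto T_\Lambda\setminus J$ is a homeomorphism $\Spec_{\mathbb{Z}^k}(T_\Lambda)\to\mathscr{P}\mathscr{F}il(T_\Lambda)$. By Theorem \ref{th maximal tails and prime filters}, $\varrho$ is this homeomorphism composed with the bijection $\phi:M\mapsto J_{\Lambda^0\setminus M}$ of Theorem \ref{th maximal tails via TM}. So it suffices to show that $\phi$ maps the closed sets $F_S$ of $\tau$ exactly onto the closed sets $V(J)$.

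\textbf{Translating $V(J)$ through $\phi$.} For a $\mathbb{Z}^k$-order ideal $J$ and a maximal tail $M$, the lattice isomorphism $H\mapsto J_H$ of \cite[Proposition 4.5]{HMPS1} gives
\[J\subseteq J_{\Lambda^0\setminus M}\iff H_J\subseteq \Lambda^0\setminus M\iff M\subseteq \Lambda^0\setminus H_J.\]
Hence
\[\phi^{-1}(V(J))=\{M\in\mathscr{M}(\Lambda)\mid M\subseteq \Lambda^0\setminus H_J\}.\]
The whole proof thus reduces to comparing the sets $\{M\mid M\subseteq \Lambda^0\setminus H\}$, for $H$ hereditary saturated, with the sets $F_S$.

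\textbf{Each $F_S$ is closed.} Given $S\subseteq\mathscr{M}(\Lambda)$, put $X=\bigcup_{N\in S}N$ and $H=\Lambda^0\setminus X$. Conditions (MT1) and (MT2) are each verified inside a single member $N\ni w$ (resp.\ $N\ni v$), so they pass to the union $X$. By the remark after Definition \ref{def maximal tail}, $H$ is therefore hereditary saturated. It follows that $F_S=\phi^{-1}(V(J_H))$, so $\phi(F_S)$ is closed.

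\textbf{Each $V(J)$ pulls back to some $F_S$.} Given $J$, let $X=\Lambda^0\setminus H_J$ and $S=\{N\in\mathscr{M}(\Lambda)\mid N\subseteq X\}$. Clearly $F_S\subseteq\phi^{-1}(V(J))$. For the reverse inclusion it suffices to show $X=\bigcup_{N\in S}N$; this is the main step. Take $u\in X$. Then $u(0)\notin J$, and Lemma \ref{lem proper ideal contained in a prime ideal} gives a prime $P\supseteq J$ with $u(0)\notin P$. Then $N:=\psi(P)=\Lambda^0\setminus H_P$ is a maximal tail containing $u$, and $H_J\subseteq H_P$ gives $N\subseteq X$, so $N\in S$. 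If $X=\emptyset$ then $S=\emptyset$ and both sides are empty. Hence $\phi^{-1}(V(J))=F_S$, and $\phi$, and therefore $\varrho$, is a homeomorphism.

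\textbf{The consequences.} A $\mathbb{Z}^k$-monoid isomorphism $T_\Lambda\cong T_\Omega$ carries prime filters to prime filters and the basic sets $\mathscr{D}(J)$ to basic sets, so it induces a homeomorphism $\mathscr{P}\mathscr{F}il(T_\Lambda)\cong\mathscr{P}\mathscr{F}il(T_\Omega)$. For strongly aperiodic $\Lambda$ and $\Omega$, compose with the homeomorphisms $\mathscr{M}(\Lambda)\cong\Prim(C^*(\Lambda))$ and $\mathscr{M}(\Lambda)\cong\Prim(\KP(\Lambda))$ recalled above, and with their analogues for $\Omega$. Finally, primitivity means that $\{0\}$ belongs to $\Prim$. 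In the hull-kernel topology, $\{0\}$ is then a point whose closure is the whole space. Conversely, a point whose closure is everything is an ideal contained in every primitive ideal, hence equal to $\{0\}$ by semiprimitivity, which holds in these graded settings. Having such a dense point is a topological property, so it is preserved by homeomorphisms, and primitivity transfers between $\Lambda$ and $\Omega$.
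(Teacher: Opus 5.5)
Your proof is correct and is essentially the paper's argument, phrased with closed sets and routed through $\Spec_{\mathbb{Z}^k}(T_\Lambda)$ via Proposition~\ref{pro dual of prime order ideals}: the paper uses the same $S=\{N\mid N\subseteq \Lambda^0\setminus H_J\}$ to show $\varrho^{-1}(\mathscr{D}(J))$ is open, and the same hereditary saturated set $H=\bigcap_{N\in S}(\Lambda^0\setminus N)$ to show $\varrho$ is open. The one redundancy is your ``main step'' $X=\bigcup_{N\in S}N$ (and with it the appeal to Lemma~\ref{lem proper ideal contained in a prime ideal}), since any maximal tail $M\subseteq X$ already lies in $S$ and hence in $F_S$; on the other hand, you spell out the primitivity consequence (a dense point together with semiprimitivity), which the paper leaves implicit.
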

\begin{proof}
We already know that $\varrho$ is a bijection. Let $J$ be any $\mathbb{Z}^k$-order ideal of $T_\Lambda$. Let $S:=\{N\in \mathscr{M}(\Lambda)~|~N\subseteq \Lambda^0\setminus H_J\}$. Then
\allowdisplaybreaks
{\begin{align*}
\varrho^{-1}(\mathscr{D}(J))&=\{M\in \mathscr{M}(\Lambda)~|~J\cap \mathcal{F}(M)\neq \emptyset\}\\
&=\{M\in \mathscr{M}(\Lambda)~|~J\nsubseteq T_\Lambda\setminus \mathcal{F}(M)\}\\
&=\{M\in \mathscr{M}(\Lambda)~|~M\nsubseteq \Lambda^0\setminus H_J\}\\
&=\{M\in \mathscr{M}(\Lambda)~|~M\nsubseteq \displaystyle{\bigcup_{N\in S}}~N\}\\
&=\mathscr{M}(\Lambda)\setminus F_S.
\end{align*}
}
Therefore, $\varrho^{-1}(\mathscr{D}(J))$ is open in $\mathscr{M}(\Lambda)$ and so $\varrho$ is continuous. Now we show that it is open. Let $S\subseteq \mathscr{M}(\Lambda)$ and $H:=\displaystyle{\bigcap_{N\in S}}\Lambda^0\setminus S$. Clearly, $H$ is a hereditary saturated subset of $\Lambda^0$. Now, 
\begin{align*}
\varrho(\mathscr{M}(\Lambda)\setminus F_S)=\{\mathcal{F}(M)~|~M\nsubseteq \displaystyle{\bigcup_{N\in S}}~N\}=\{\mathcal{F}\in \mathscr{P}\mathscr{F}il(T_\Lambda)~|~\mathcal{F}\cap J_H\neq \emptyset\}.
\end{align*}
The last equality follows since $M\nsubseteq \displaystyle{\bigcup_{N\in S}}~N\Leftrightarrow H\nsubseteq \Lambda^0\setminus M\Leftrightarrow J_H\nsubseteq J_{\Lambda^0\setminus M}\Leftrightarrow J_H\cap \mathcal{F}(M)\neq \emptyset$ for any $M\in \mathscr{M}(\Lambda)$ and $\mathcal{F}=\mathcal{F}(M(\mathcal{F}))$ for any $\mathcal{F}\in \mathscr{P}\mathscr{F}il(T_\Lambda)$. This shows that $\varrho$ is an open map. Hence, $\varrho$ is a homeomorphism. 
\end{proof}
\begin{thm}\label{th the three term connection for KPA}
Let $\Lambda$ be a row-finite $k$-graph without sources. Then all the spaces $\mathscr{M}(\Lambda)$, $\mathscr{P}\mathscr{F}il(T_\Lambda)$, $\Spec_{\mathbb{Z}^k}(T_\Lambda)$, and $\Spec^{\gr}(\KP(\Lambda))$ are homeomorphic. Moreover, if $|\Lambda^0|<\infty$, then each of these spaces is spectral in the sense of Hochster.
\end{thm}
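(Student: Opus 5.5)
We chain together homeomorphisms that are mostly already available. Theorem \ref{th TM detects primitive ideal space} gives $\mathscr{M}(\Lambda)\cong \mathscr{P}\mathscr{F}il(T_\Lambda)$ via $\varrho$. Since $T_\Lambda$ is a $\mathbb{Z}^k$-monoid, Proposition \ref{pro dual of prime order ideals} gives $\Spec_{\mathbb{Z}^k}(T_\Lambda)\cong \mathscr{P}\mathscr{F}il(T_\Lambda)$ via $J\mapsto T_\Lambda\setminus J$. It therefore remains to produce a homeomorphism $\Spec_{\mathbb{Z}^k}(T_\Lambda)\to \Spec^{\gr}(\KP(\Lambda))$. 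The spectrality claim then follows from Corollary \ref{cor spec of TM is compact}, since being spectral in the sense of Hochster is a topological property.

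For the last homeomorphism we use the known lattice isomorphisms
\[
\mathcal{L}^{\gr}(\KP(\Lambda))\cong \mathcal{H}_\Lambda\cong \mathcal{L}_{\mathbb{Z}^k}(T_\Lambda),\qquad I(H)\leftrightarrow H\leftrightarrow J_H ,
\]
where the first is from \cite[Theorem 5.1]{Pino} and the second from \cite[Proposition 4.5]{HMPS1}. Call the composite $\Theta$; it is an inclusion-preserving bijection between graded ideals of $\KP(\Lambda)$ and $\mathbb{Z}^k$-order ideals of $T_\Lambda$. Since it is a lattice isomorphism, it preserves intersections.

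Next we show that $\Theta$ restricts to a bijection between graded prime ideals and prime $\mathbb{Z}^k$-order ideals. Here the definitions do not match directly: graded primeness uses products $IJ\subseteq P$, while monoid primeness uses intersections $J_1\cap J_2\subseteq J$. We bridge this with the standard fact that for graded ideals of $\KP(\Lambda)$ one has $IJ=I\cap J$. This holds because $\KP(\Lambda)$ has local units consisting of sums of vertex projections, so graded ideals are idempotent, and $I\cap J\subseteq IJ$ follows since $I\cap J$ is a graded ideal generated by idempotents $p_v$ for $v\in H_1\cap H_2$ (equivalently, $I(H_1)I(H_2)=I(H_1\cap H_2)$). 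With this, $P$ is graded prime if and only if $I\cap J\subseteq P$ implies $I\subseteq P$ or $J\subseteq P$ for all graded ideals $I,J$. This condition is purely lattice-theoretic and is preserved by $\Theta$. Properness is also preserved, since $\Theta(\KP(\Lambda))=T_\Lambda$. Alternatively, one can cite \cite{Larki}: graded primes are exactly the $I(\Lambda^0\setminus M)$ for maximal tails $M$, and compare with Theorem \ref{th maximal tails via TM}. We regard this step as the main point to check carefully.

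Finally, continuity in both directions is immediate because $\Theta$ is an order isomorphism. The closed sets on each side are $V^{\gr}(I)$ and $V(J)$, and
\[
\Theta(V^{\gr}(I))=\{\Theta(P)\;:\;I\subseteq P\}=V(\Theta(I)).
\]
Every $\mathbb{Z}^k$-order ideal has the form $\Theta(I)$, so $\Theta$ maps closed sets onto closed sets bijectively and is therefore a homeomorphism. Composing this with the maps of the previous paragraph completes the proof.
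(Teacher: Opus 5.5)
Your proof is correct. It differs from the paper's in how $\Spec^{\gr}(\KP(\Lambda))$ is attached to the other spaces.

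The paper links $\Spec^{\gr}(\KP(\Lambda))$ to $\mathscr{M}(\Lambda)$ through Larki's classification \cite[Proposition 4.1]{Larki}, which identifies the graded primes as the ideals $I(\Lambda^0\setminus M)$ with $M$ a maximal tail. It asserts, without details (``it can be shown''), that $M\mapsto I(\Lambda^0\setminus M)$ is a homeomorphism. It then obtains the remaining spaces from Proposition \ref{pro dual of prime order ideals} and Theorem \ref{th maximal tails and prime filters}, with the topology on $\mathscr{M}(\Lambda)$ handled in Theorem \ref{th TM detects primitive ideal space}.

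You instead connect $\Spec^{\gr}(\KP(\Lambda))$ directly to $\Spec_{\mathbb{Z}^k}(T_\Lambda)$ through the composite lattice isomorphism $I(H)\leftrightarrow J_H$. Your key identity $I(H_1)I(H_2)=I(H_1\cap H_2)=I(H_1)\cap I(H_2)$ turns product-primeness into meet-primeness. So the primes correspond for purely lattice-theoretic reasons, and $\Theta(V^{\gr}(I))=V(\Theta(I))$ makes the homeomorphism formal.

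What your route buys:
\begin{itemize}
\item It bypasses maximal tails and Larki's result for this leg.
\item It actually supplies the verification that the paper leaves implicit.
\end{itemize}
What the paper's route buys: it keeps the combinatorial description $M\mapsto I(\Lambda^0\setminus M)$ in view, and that description is what Section \ref{sec applications} uses for regular primes and $\mathscr{M}_r(\Lambda)$. Both routes produce the same map $I(H)\leftrightarrow J_H$, which is the one used in Theorem \ref{th lattice isomorphism of regular ideals via TM}.

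One small correction. Local units alone do not make ideals idempotent; for example, $2\mathbb{Z}\subseteq\mathbb{Z}$ is not. What makes your step work is the second reason you give: $I(H_1)\cap I(H_2)$ is generated by the idempotents $p_v=p_vp_v\in I(H_1)I(H_2)$ for $v\in H_1\cap H_2$, so $I(H_1)\cap I(H_2)\subseteq I(H_1)I(H_2)$.
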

\begin{proof}
Using \cite[Proposition 4.1]{Larki}, it can be shown that $\mathscr{M}(\Lambda)$ and $\Spec^{\gr}(\KP(\Lambda))$ are homeomorphic via the homeomorphism sending $M$ in $\mathscr{M}(\Lambda)$ to $I(\Lambda^0\setminus M)$ in $\Spec^{\gr}(\KP(\Lambda))$. The result follows from Proposition \ref{pro dual of prime order ideals} and Theorem \ref{th maximal tails and prime filters}. The final statement follows from Corollary \ref{cor spec of TM is compact}.
\end{proof}

We now specialize Theorem \ref{th the three term connection for KPA} for Leavitt path algebras. Suppose $E$ is a row-finite directed graph (possibly with sinks). Following \cite[Definition 4.1.6]{lpabook}, let $\mathscr{M}_\gamma(E)$ denote the subspace of $\mathscr{M}(E)$ consisting of all maximal tails $M$ with the property that any cycle with vertices in $M$ has an exit in $M$. Also, let $\Prim^{\gr}(L(E))$ be the subspace of $\Spec^{\gr}(L(E))$ consisting of the graded prime ideals of $L(E)$ which are primitive. 

Note that the proof of Theorem \ref{th the three term connection for KPA} works equally well for $E$. More precisely, we have homeomorphisms $\phi:\mathscr{M}(E)\longrightarrow \Spec_\mathbb{Z}(T_E)$; $M\longmapsto J_{E^0\setminus M}$ and $\rho:\mathscr{M}(E)\longrightarrow \Spec^{\gr}(L(E))$; $M\longmapsto I(E^0\setminus M)$. Moreover, from \cite[Lemma 2.2 \& Corollary 4.3(i)]{hazli} and \cite[Theorem 4.1.11]{lpabook}, it follows that the restrictions of $\phi$ and $\rho$ to the subspace $\mathscr{M}_\gamma(E)$ map this space onto the subspaces $\Prim_{\mathbb{Z}}(T_E)$ of $\Spec_{\mathbb{Z}}(T_E)$ and $\Prim^{\operatorname{gr}}(L(E))$ of $\Spec^{\operatorname{gr}}(L(E))$, respectively. As a consequence, we have the following commutative diagram of topological spaces and homeomorphisms at the level of Leavitt path algebras: 

\[
\begin{tikzcd}[row sep=huge, column sep=huge]
\Prim_{\mathbb{Z}}(T_E) \arrow[d, hook] & 
\mathscr{M}_{\gamma}(E) \arrow[l, "\cong"'] \arrow[r, "\cong"] \arrow[d, hook] & 
\Prim^{\operatorname{gr}}(L(E)) \arrow[d, hook] \\
\Spec_{\mathbb{Z}}(T_E) & 
\mathscr{M}(E) \arrow[l, "\cong"', "\phi"] \arrow[r, "\cong", "\rho"'] & 
\Spec^{\operatorname{gr}}(L(E)) \\[-3.5em]
J_{E^0 \setminus M} & 
M \arrow[l, maps to] \arrow[r, maps to] & 
I(E^0 \setminus M)
\end{tikzcd}
\]

In Proposition \ref{pro Necessary condition for connectedness}, we saw that if the prime spectrum of a refinement $\Gamma$-monoid is connected, then the monoid is indecomposable. We end this section by showing that the converse also holds for the talented monoids of certain $k$-graphs. Recall that a $k$-graph is called \emph{connected} if the equivalence relation $\approx$ on $\Lambda^0$ generated by $\{(u,v)\in \Lambda^0\times \Lambda^0~|~u\Lambda v\neq \emptyset\}$ coincides with the universal relation on $\Lambda^0$.

\begin{thm}\label{th connectedness of spectrum of TM}
Let $\Lambda$ be a row-finite $k$-graph without sources. Then the following statements are equivalent. 

$(i)$ $\Spec_{\mathbb{Z}^k}(T_\Lambda)$ is connected as a topological space.

$(ii)$ $T_\Lambda$ is indecomposable as a $\mathbb{Z}^k$-monoid.

$(iii)$ There is no pair $(H_1,H_2)\in \mathcal{H}_\Lambda\times \mathcal{H}_\Lambda$ such that $H_1,H_2\neq \emptyset$, $H_1\cap H_2=\emptyset$, and $\overline{H_1\cup H_2}=\Lambda^0$. 

Moreover, if any one of the above holds, then $\Lambda$ is connected as a $k$-graph.
\end{thm}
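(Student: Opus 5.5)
We prove the cycle $(i)\Rightarrow(ii)\Rightarrow(iii)\Rightarrow(i)$ and then the final assertion. The step $(i)\Rightarrow(ii)$ is Proposition \ref{pro Necessary condition for connectedness}: $T_\Lambda$ is a refinement $\mathbb{Z}^k$-monoid by \cite[Theorem 3.18]{HMPS1}.

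For $(ii)\Leftrightarrow(iii)$ we use the lattice isomorphism $H\mapsto J_H$ between $\mathcal{H}_\Lambda$ and the lattice of $\mathbb{Z}^k$-order ideals of $T_\Lambda$ (\cite[Proposition 4.5]{HMPS1}). Under this isomorphism:
\begin{itemize}
\item $\emptyset$ corresponds to $\{0\}$ and $\Lambda^0$ to $T_\Lambda$;
\item $J_{H_1}\cap J_{H_2}=J_{H_1\cap H_2}$;
\item $\langle J_{H_1}\cup J_{H_2}\rangle=J_{\overline{H_1\cup H_2}}$.
\end{itemize}
Suppose $(H_1,H_2)$ is a pair as in $(iii)$. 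Each $H_i$ is proper: if $H_1=\Lambda^0$, then $H_2=H_2\cap H_1=\emptyset$, a contradiction. So $J_{H_1},J_{H_2}$ are proper, their intersection is $\{0\}$, and their join is $T_\Lambda$; hence $T_\Lambda$ is decomposable. Conversely, a decomposition $J_1,J_2$ of $T_\Lambda$ gives $H_i=H_{J_i}$. These are nonempty, since a nonzero $\mathbb{Z}^k$-order ideal contains some $v(\N)$ and hence $v(0)$. They are disjoint and $\overline{H_1\cup H_2}=\Lambda^0$. So decompositions correspond exactly to pairs as in $(iii)$.

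The main step is $(iii)\Rightarrow(i)$ (equivalently $(ii)\Rightarrow(i)$). Suppose $\Spec_{\mathbb{Z}^k}(T_\Lambda)=A\sqcup B$ with $A,B$ nonempty clopen. Then $A=V(I)$ and $B=V(J)$ for $\mathbb{Z}^k$-order ideals $I,J$, both proper since $A,B\neq\emptyset$.
\begin{itemize}
\item $V(I\cap J)=A\cup B$ is the whole spectrum. By Proposition \ref{pro basic facts about basic open sets}(3) (nilradical zero), $I\cap J=\{0\}$.
\item $V(\langle I\cup J\rangle)=V(I)\cap V(J)=\emptyset$. By Lemma \ref{lem proper ideal contained in a prime ideal} this forces $\langle I\cup J\rangle=T_\Lambda$.
\end{itemize}
Thus $T_\Lambda$ is decomposable, contradicting $(ii)$. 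No compactness or finite generation is needed, because every closed set has the form $V(\cdot)$. The only care required is to invoke the refinement results (Lemma \ref{lem proper ideal contained in a prime ideal} and part (3) of Proposition \ref{pro basic facts about basic open sets}) correctly.

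Finally, suppose $\Lambda$ is not connected. Pick a $\approx$-class $C\neq\Lambda^0$ and set $H_1=C$, $H_2=\Lambda^0\setminus C$.
\begin{itemize}
\item Each $H_i$ is a union of $\approx$-classes, hence hereditary: paths stay inside a class.
\item Each is saturated. If $s(v\Lambda^\N)\subseteq H_1$, then since there are no sources, some $\lambda\in v\Lambda^\N$ exists and $v\approx s(\lambda)\in H_1$, so $v\in H_1$. The same argument works for $H_2$.
\end{itemize}
The two sets are nonempty, disjoint, and already cover $\Lambda^0$, so $(iii)$ fails. This proves the moreover clause by contraposition.
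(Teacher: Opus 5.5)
Your proof is correct. It departs from the paper's argument in two places, and in both your route is shorter.

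\textbf{The step $(ii)\Rightarrow(i)$.} The paper first shows graph-theoretically that $V(J)\neq\Spec_{\mathbb{Z}^k}(T_\Lambda)$ for every nonzero $\mathbb{Z}^k$-order ideal $J$. It picks $v(0)\in J$ and an infinite path $x\in v\Lambda^\infty$. It then checks that the set of vertices reaching some $x(\M)$ is a maximal tail $M$. The maximal-tail correspondence of Theorem \ref{th maximal tails via TM} then gives a prime $J_{\Lambda^0\setminus M}$ with $J\nsubseteq J_{\Lambda^0\setminus M}$. Only after that does it run the same splitting argument you give. You replace this construction by $\nil(T_\Lambda)=\{0\}$ from Proposition \ref{pro basic facts about basic open sets}(3), which is the same fact obtained abstractly. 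As a result your argument uses only that $T_\Lambda$ is a conical refinement $\mathbb{Z}^k$-monoid. It therefore proves the converse of Proposition \ref{pro Necessary condition for connectedness} for every conical refinement $\Gamma$-monoid, whereas the paper announces it only for talented monoids. What the paper's route gains in exchange is an explicit combinatorial witness: a maximal tail giving a prime that avoids $v(0)$.

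\textbf{The final clause.} The paper works with $(ii)$. It takes the order ideals generated by the vertices of an $\approx$-class and of its complement, and verifies properness and trivial intersection with the Confluence Lemma in $M_{\overline{\Lambda}}$. You observe instead that an $\approx$-class and its complement are already hereditary and saturated, using the absence of sources for saturation, so $(iii)$ fails at once. Under the lattice isomorphism these two sets correspond exactly to the paper's two ideals, so the confluence computations become unnecessary.

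\textbf{One point to make explicit.} In deriving $(iii)$ from a decomposition $J_1,J_2$, note that both ideals are automatically nonzero: if $J_1=\{0\}$ then $\langle J_1\cup J_2\rangle=J_2\neq T_\Lambda$. This is what guarantees $H_{J_i}\neq\emptyset$.
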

\begin{proof}
$(i)\Rightarrow (ii)$ Since $T_\Lambda$ is a refinement monoid, this implication directly follows from Proposition \ref{pro Necessary condition for connectedness}.

$(ii)\Rightarrow (i)$ Let $J$ be any nonzero $\mathbb{Z}^k$-order ideal of $T_\Lambda$. We first observe that $V(J)\neq \Spec_{\mathbb{Z}^k}(T_\Lambda)$. Choose any $0\neq a\in J$. Then $a\ge v(\N)$ for some $v\in \Lambda^0$ and $\N\in \mathbb{Z}^k$. Clearly, $v(0)=~^{-\N}v(\N)\in J$. Since $\Lambda$ has no sources, we can choose an infinite path $x\in v\Lambda^\infty$. Let \[M:=\{w\in \Lambda^0~|~w\Lambda x(\M)\neq \emptyset~\text{for some}~\M\in \mathbb{N}^k\}.\] We claim that $M$ is a maximal tail of $\Lambda$. Clearly, $M\neq \emptyset$ since $v=x(0)\in M$. Let $w\in M$ and $u\in \Lambda^0$ be such that $u\Lambda w\neq \emptyset$. Since $w\Lambda x(\M)\neq \emptyset$ for some $\M\in \mathbb{N}^k$, it follows that $u\Lambda x(\M)\neq \emptyset$. Hence, $u\in M$. So $M$ satisfies (MT1). Let $w\in M$ and $i\in \{1,2,\ldots,k\}$. Then $w\Lambda x(\M)\neq \emptyset$ for some $\M\in \mathbb{N}^k$. Choose $\lambda\in w\Lambda x(\M)$. Let $\mu=\lambda x(\M,\M+\mathbf{e}_i)$. Then $\mu(0,\mathbf{e}_i)\in w\Lambda^{\mathbf{e}_i}$ and $s(\mu(0,\mathbf{e}_i))\Lambda x(\M+\mathbf{e}_i)\neq \emptyset$ since $\mu(\mathbf{e}_i,d(\mu))\in s(\mu(0,\mathbf{e}_i))\Lambda x(\M+\mathbf{e}_i)$. So $M$ satisfies (MT2). Let $w_1,w_2\in M$. Then there are $\bP,\Q\in \mathbb{N}^k$ such that $w_1\Lambda x(\bP),w_2\Lambda x(\Q)\neq \emptyset$. Choose $\alpha\in w_1\Lambda x(\bP)$ and $\beta\in w_2\Lambda x(\Q)$. Let $\M:=\bP\vee \Q$. Then $\alpha x(\bP,\M)\in w_1\Lambda x(\M)$ and $\beta x(\Q,\M)\in w_2\Lambda x(\M)$. Since $x(\M)\in M$, $M$ satisfies condition (MT3). Hence, $M$ is a maximal tail. By Theorem \ref{th maximal tails and prime filters}, $J_{\Lambda^0\setminus M}$ is a prime $\mathbb{Z}^k$-order ideal of $T_\Lambda$ and since $v\in M$, $v(0)\notin J_{\Lambda^0\setminus M}$. Therefore, $J\nsubseteq J_{\Lambda^0\setminus M}$, showing that $V(J)\neq \Spec_{\mathbb{Z}^k}(T_\Lambda)$. Now consider any two $\mathbb{Z}^k$-order ideals $J_1,J_2$ of $T_\Lambda$ such that $V(J_1)\cap V(J_2)=\emptyset$. Then $V\left(\langle J_1 \cup J_2\rangle \right)=\emptyset$. This implies that $\langle J_1 \cup J_2\rangle=T_\Lambda$ since any proper $\mathbb{Z}^k$-order ideal of $T_\Lambda$ is contained in a prime $\mathbb{Z}^k$-order ideal by Lemma \ref{lem proper ideal contained in a prime ideal}. Now since $T_\Lambda$ is indecomposable, we must have $J_1\cap J_2\neq \{0\}$. Then $V(J_1\cap J_2)\neq \Spec_{\mathbb{Z}^k}(T_\Lambda)$ by our earlier observation. Hence, $V(J_1)\cup V(J_2)\neq \Spec_{\mathbb{Z}^k}(T_\Lambda)$. Therefore, $\Spec_{\mathbb{Z}^k}(T_\Lambda)$ cannot be written as the union of two disjoint closed sets and so it is connected.

$(ii)\Leftrightarrow (iii)$ This directly follows from the lattice isomorphism between $\mathcal{H}_\Lambda$ and the lattice of all $\mathbb{Z}^k$-order ideals of $T_\Lambda$ (see \cite[Proposition 4.5]{HMPS1}).  

Suppose $(ii)$ holds. If possible, assume that $\Lambda$ is not connected. Then there exists a vertex $v\in \Lambda^0$ such that $[v]_{\approx}$ is not the whole of $\Lambda^0$. Let $I:=\langle \{u(0)~|~u\in [v]_{\approx}\}\rangle$ and $J:=\langle \{w(0)~|~w\in \Lambda^0\setminus [v]_{\approx}\}\rangle$. We show that both $I$ and $J$ are proper $\mathbb{Z}^k$-order ideals of $T_\Lambda$. Choose any $w\in \Lambda^0\setminus [v]_{\approx}$. If $I=T_\Lambda$, then $w(0)\in I$. Hence, $w(0)\le \displaystyle{\sum_{i=1}^{t}}u_i(\N_i)$ for some $t\in \mathbb{N}$, $\N_i\in \mathbb{Z}^k$, and $u_i\in [v]_{\approx}$. Then $\displaystyle{\sum_{i=1}^{t}}u_i(\N_i)=w(0)+a$ for some $a\in T_\Lambda$. Transferring this equality to $M_{\overline{\Lambda}}$ and applying the confluence lemma, we get an element $\gamma\in \mathbb{F}_{\overline{\Lambda}}\setminus \{0\}$ such that $\displaystyle{\sum_{i=1}^{t}}(u_i,\N_i)\longrightarrow \gamma$ and $(w,0)+a \longrightarrow \gamma$. This implies that there exists a vertex $(u,\M)$ in the support of $\gamma$ such that $(w,0)\overline{\Lambda}(u,\M)\neq \emptyset$ and $(u_j,\N_j)\overline{\Lambda} (u,\M)\neq \emptyset$ for some $j\in \{1,2,\ldots,t\}$. It follows that $w\Lambda u,u_j\Lambda u\neq \emptyset$. But then $w\approx u\approx u_j\approx v$, which is a contradiction. Thus, $I$ is a proper $\mathbb{Z}^k$-order ideal of $T_\Lambda$. A similar argument can be applied to show that $v(0)\notin J$ and so $J$ is also proper. Since $H_{\langle I\cup J\rangle}=\Lambda^0$, it follows that $\langle I\cup J\rangle=T_\Lambda$. Finally we observe that $I\cap J=\{0\}$. Indeed if $0\neq a\in I\cap J$, then $a\le \displaystyle{\sum_{i=1}^{p}} u_i(\N_i)$ and $a\le \displaystyle{\sum_{j=1}^{q}}w_j(\M_j)$ for some $p,q\in \mathbb{N}$, $\N_i,\M_j\in \mathbb{Z}^k$, $u_i\in [v]_{\approx}$, and $w_j\in \Lambda^0\setminus [v]_{\approx}$. Then there are $b,c\in M_{\overline{\Lambda}}$ such that \[\displaystyle{\sum_{i=1}^{p}} (u_i,\N_i)=a+b,~\displaystyle{\sum_{j=1}^{q}}(w_j,\M_j)=a+c.\] Applying \cite[Lemma 3.1 \& 3.5]{HMPS1}, we have $\eta_1,\eta_2,\delta_1,\delta_2\in \mathbb{F}_{\overline{\Lambda}}$ with $\eta_1,\delta_1\neq 0$ such that \[\displaystyle{\sum_{i=1}^{p}} (u_i,\N_i)\longrightarrow \eta_1+\eta_2,\displaystyle{\sum_{j=1}^{q}}(w_j,\M_j)\longrightarrow \delta_1+\delta_2,\]  \[a\longrightarrow \eta_1,a\longrightarrow \delta_1,\] \[b\longrightarrow \eta_2,c\longrightarrow \delta_2.\] Then $\eta_1=\delta_1$ in $M_{\overline{\Lambda}}$ and hence, there is some $\gamma\in \mathbb{F}_{\overline{\Lambda}}\setminus \{0\}$ such that $\eta_1,\delta_1\longrightarrow \gamma$. Then \[\displaystyle{\sum_{i=1}^{p}} (u_i,\N_i)\longrightarrow \eta_1+\eta_2\longrightarrow \gamma+\eta_2\] and \[\displaystyle{\sum_{j=1}^{q}}(w_j,\M_j)\longrightarrow \delta_1+\delta_2\longrightarrow \gamma+\delta_2.\] Choose any vertex $(u,\Q)$ in the support of $\gamma$. Then there are $i\in \{1,2,\ldots,p\}$ and $j\in \{1,2,\ldots,q\}$ such that $(u_i,\N_i)\overline{\Lambda}(u,\Q)\neq \emptyset$ and $(w_j,\M_j)\overline{\Lambda}(u,\Q)\neq \emptyset$. But these imply that $u_i\Lambda u,w_j\Lambda u\neq \emptyset$, whence $u_i\approx w_j$ which is not possible. Hence, $I\cap J=\{0\}$ and we have shown that $T_\Lambda$ is decomposable which contradicts $(ii)$. 
\end{proof}
As a joint consequence of Theorem \ref{th the three term connection for KPA} and Theorem \ref{th connectedness of spectrum of TM}, we have the following corollary.
\begin{cor}\label{cor prime spectrum of KPA is connected via TM}
Suppose $\Lambda$ is a row-finite $k$-graph without sources. Then the space $\Spec^{\gr}(\KP(\Lambda))$ is connected if and only if $T_\Lambda$ is indecomposable.  
\end{cor}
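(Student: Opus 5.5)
The plan is to obtain the corollary by composing two earlier results, with no new monoid-theoretic work. Connectedness is a topological invariant, so it passes through any homeomorphism. Theorem \ref{th the three term connection for KPA} gives a homeomorphism between $\Spec^{\gr}(\KP(\Lambda))$ and $\Spec_{\mathbb{Z}^k}(T_\Lambda)$, for any row-finite $k$-graph $\Lambda$ without sources. Concretely, this is the composite $J_{\Lambda^0\setminus M}\mapsto M\mapsto I(\Lambda^0\setminus M)$: it runs through $\mathscr{M}(\Lambda)$ via the map $\phi$ of Theorem \ref{th maximal tails via TM} and the Larki correspondence. Hence $\Spec^{\gr}(\KP(\Lambda))$ is connected if and only if $\Spec_{\mathbb{Z}^k}(T_\Lambda)$ is connected.

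Next, I would invoke the equivalence $(i)\Leftrightarrow(ii)$ of Theorem \ref{th connectedness of spectrum of TM}. It says that $\Spec_{\mathbb{Z}^k}(T_\Lambda)$ is connected exactly when $T_\Lambda$ is indecomposable as a $\mathbb{Z}^k$-monoid. The hypotheses match: Theorem \ref{th connectedness of spectrum of TM} requires only that $\Lambda$ be row-finite without sources, and no finiteness of $\Lambda^0$ is needed. Chaining the two equivalences gives the statement.

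The only point needing care is that the homeomorphism in Theorem \ref{th the three term connection for KPA} is genuinely topological and not merely a bijection. In particular, the Zariski topology on $\Spec_{\mathbb{Z}^k}(T_\Lambda)$ must correspond to the graded Zariski topology on $\Spec^{\gr}(\KP(\Lambda))$. If I wanted to double-check this, I would verify that closed sets $V(J_H)$ correspond to $V^{\gr}(I(H))$. This follows from the lattice isomorphisms between $\mathbb{Z}^k$-order ideals of $T_\Lambda$, hereditary saturated sets, and graded ideals of $\KP(\Lambda)$, which are all inclusion-preserving. That is the only nontrivial input, and it is already encapsulated in the cited theorem.
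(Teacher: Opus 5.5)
Your proposal is correct and follows the paper's route exactly. The paper states this corollary as a joint consequence of the homeomorphism $\Spec_{\mathbb{Z}^k}(T_\Lambda)\cong\Spec^{\gr}(\KP(\Lambda))$ from Theorem \ref{th the three term connection for KPA} and the equivalence $(i)\Leftrightarrow(ii)$ of Theorem \ref{th connectedness of spectrum of TM}; your extra check that the closed sets $V(J_H)$ and $V^{\gr}(I(H))$ correspond is a valid, though optional, confirmation that this map is a homeomorphism and not merely a bijection.
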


\section{Applications}\label{sec applications} 
In this section, we demonstrate some applications of the main results obtained in the previous sections. 

\subsection{Regular ideals of Kumjian--Pask algebras} Recall from \cite{RegLPA} that an ideal $I$ of an algebra $A$ is called \emph{regular} if $I=I^{\perp\perp}$, where \[I^\perp:=\{x\in A~|~xa=ax=0~\text{for all}~a\in I\}\] and $I^{\perp\perp}=(I^\perp)^\perp$. Our first application relates the regular ideals of the talented monoid of a higher-rank graph with the regular graded ideals of the associated Kumjian--Pask algebra. 

For a $k$-graph $\Lambda$, we denote the collection of all regular graded ideals of $\KP(\Lambda)$ by $\reg^{\gr}(\KP(\Lambda))$. The following result can be viewed as an analogue of Theorem \ref{th regular ideals via the prime spectrum} at the level of Kumjian--Pask algebras. The proof follows the same idea with some necessary modifications. But we include the details for the reader's convenience.
\begin{thm}\label{th regular ideal and spectrum of KPA}
Let $\Lambda$ be a row-finite $k$-graph without sources. Let $I$ be any graded ideal of $\KP(\Lambda)$. Then $I$ is a regular ideal if and only if $D^{\gr}(I)$ is a regular open set in $\Spec^{\gr}(\KP(\Lambda))$. Moreover, the poset $\reg^{\gr}(\KP(\Lambda))$ is order-isomorphic to the Boolean algebra $\mathcal{R}\mathcal{O}(\Spec^{\gr}(\KP(\Lambda)))$.     
\end{thm}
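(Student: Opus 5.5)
The plan is to mimic the proof of Theorem \ref{th regular ideals via the prime spectrum}, replacing the monoid-theoretic ingredients by their ring-theoretic counterparts in $\KP(\Lambda)$. The first ingredient is the analogue of Lemma \ref{lem V(I)=V(J) implies I=J}: every graded ideal $I$ of $\KP(\Lambda)$ is the intersection of the graded prime ideals containing it. I would obtain this from Theorem \ref{th the three term connection for KPA} together with the lattice isomorphisms $\mathcal{H}_\Lambda\cong$ {graded ideals of $\KP(\Lambda)$} (via $H\mapsto I(H)$, \cite[Theorem 5.1]{Pino}) and $\mathcal{H}_\Lambda\cong$ {$\mathbb{Z}^k$-order ideals of $T_\Lambda$} (\cite[Proposition 4.5]{HMPS1}). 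Write $I=I(H)$. Since $J_H=\rad(J_H)$ by Proposition \ref{pro basic facts about basic open sets}(3), we have $H=\bigcap\{\Lambda^0\setminus M : M\in\mathscr{M}(\Lambda),\ H\subseteq \Lambda^0\setminus M\}$. Transporting this to the algebra gives $I=\bigcap_{P\in V^{\gr}(I)}P$. Consequently $V^{\gr}$ is injective and order-reflecting on graded ideals.

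Second, I would verify that $I^\perp$ is a graded ideal whenever $I$ is graded; this is the standard fact that the annihilator of a graded ideal is graded, proved by comparing homogeneous components. I also need the analogue of Lemma \ref{lem perp via intersection}: for a homogeneous $x$, or better for a graded ideal $K$, we have $K\subseteq I^\perp$ iff $KI=0$ iff $K\cap I=0$. This holds because $K\cap I\supseteq KI$ and $(K\cap I)^2\subseteq KI$, and $\KP(\Lambda)$ together with its graded ideals is graded semiprime: graded ideals are of the form $I(H)$, which are idempotent and semiprime.

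Third, I would prove the key identity $V^{\gr}(I^\perp)=\Spec^{\gr}(\KP(\Lambda))\setminus\Int(V^{\gr}(I))$ exactly as in (\ref{eq the perp}), using the basis $D^{\gr}(x)$ with $x$ homogeneous.

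\begin{itemize}
\item[$(\subseteq)$] Let $P\supseteq I^\perp$ and $x\notin P$ with $x$ homogeneous. Then $x\notin I^\perp$, so $\langle x\rangle\cap I\neq 0$. Since the intersection of all graded primes is $0$ (first step), some graded prime $Q$ fails to contain $\langle x\rangle\cap I$. This gives $Q\in D^{\gr}(x)\setminus V^{\gr}(I)$, so $P$ is not interior to $V^{\gr}(I)$.
\item[$(\supseteq)$] Suppose $P\notin V^{\gr}(I^\perp)$. Pick a homogeneous $x\in I^\perp\setminus P$; this is possible because $I^\perp$ is graded. Then $\langle x\rangle I=0\subseteq Q$ for every graded prime $Q$, so every $Q\in D^{\gr}(x)$ contains $I$. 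Hence $D^{\gr}(x)$ is a neighbourhood of $P$ inside $V^{\gr}(I)$.
\end{itemize}

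Applying the identity twice gives $V^{\gr}(I^{\perp\perp})=\overline{\Int(V^{\gr}(I))}$. So $I$ is regular iff $V^{\gr}(I)$ is regular closed iff $D^{\gr}(I)$ is regular open, where the backward direction uses the injectivity from the first step. The order-isomorphism $I\mapsto D^{\gr}(I)$ onto $\mathcal{RO}$ then follows verbatim. Surjectivity holds because every open set of the form $D^{\gr}(K)$ that is regular open arises from a regular $K$, and every open set has this form since all closed sets are $V^{\gr}(K)$ for a graded ideal $K$.

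The main obstacle I expect is the first step together with graded semiprimeness: establishing that every graded ideal equals the intersection of the graded primes containing it, and that $I\cap K=0\Leftrightarrow IK=0$ for graded ideals. I would handle both by passing through $\mathcal{H}_\Lambda$ and the talented monoid, using Proposition \ref{pro basic facts about basic open sets}(3), rather than arguing directly in the algebra.
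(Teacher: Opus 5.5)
Your proof is correct, and its skeleton is exactly the paper's: the identity $V^{\gr}(I^\perp)=\Spec^{\gr}(\KP(\Lambda))\setminus\Int(V^{\gr}(I))$ proved through the basis $D^{\gr}(x)$ with $x$ homogeneous, applied twice, followed by injectivity and order-reflection of $V^{\gr}$ on graded ideals. The difference is in how you get the one ring-theoretic input, namely that every graded ideal is the intersection of the graded primes containing it, so in particular all graded primes intersect to $\{0\}$.

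\begin{itemize}
\item The paper obtains this inside the algebra. It cites \cite[Theorem 6.5(3)]{AHLS} (every graded ideal of $\KP(\Lambda)$ is graded semiprime) and then the general graded-ring fact from \cite[\S 2.11]{NOgraded}.
\item You instead transport $J_H=\rad(J_H)$ from Proposition~\ref{pro basic facts about basic open sets}(3). You pass through $\mathcal{H}_\Lambda$, using the maximal-tail parametrisations of both spectra, and you get graded semiprimeness of $\{0\}$ from the idempotence of the ideals $I(H)$, which are generated by the idempotents $p_v$.
\end{itemize}

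Your route stays inside the paper's own framework. It makes the analogy with Lemma~\ref{lem V(I)=V(J) implies I=J} literal: the algebra statement is the monoid statement carried across the correspondence. The price is that you rely on two things.
\begin{itemize}
\item Larki's classification \cite{Larki} of the graded primes as exactly the ideals $I(\Lambda^0\setminus M)$, which underlies the homeomorphism in Theorem~\ref{th the three term connection for KPA}.
\item The fact that $H\mapsto I(H)$ and $H\mapsto J_H$ preserve arbitrary intersections. This holds because they are order isomorphisms between complete lattices whose meets are intersections, but you should say it explicitly.
\end{itemize}
The paper's argument, by contrast, is purely ring-theoretic. It would apply verbatim to any graded ring all of whose graded ideals are graded semiprime.

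One small correction. In your orthogonality lemma, $K\subseteq I^\perp$ means $KI=IK=0$, not just $KI=0$. In your setting the two conditions agree, by idempotence of graded ideals, but you never need that. The only facts you actually use are $\langle x\rangle\cap I=0\Rightarrow x\in I^\perp$ and $x\in I^\perp\Rightarrow \langle x\rangle I=0$. Both are trivial: $\langle x\rangle I$ and $I\langle x\rangle$ lie in $\langle x\rangle\cap I$, and $I^\perp$ is an ideal.
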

\begin{proof}
Let $J$ be any graded ideal of $\KP(\Lambda)$. We claim that \[V^{\gr}(J^\perp)=\Spec^{\gr}(\KP(\Lambda))\setminus \Int(V^{\gr}(J)).\] Choose any $P\in V^{\gr}(J^\perp)$. Choose any open neighborhood of $P$ of the form $D^{\gr}(x)$ for some homogeneous element $x$. Since $x\notin P$ and $J^\perp \subseteq P$, we have $x\notin J^\perp$. So there exists $a\in J$ such that $xa\neq 0$. This implies that $xJ\neq \{0\}$, whence $I(x)\cap J\neq \{0\}$, where $I(x)$ is the graded ideal generated by $x$. Now any graded ideal of $\KP(\Lambda)$ is graded semiprime by \cite[Theorem 6.5(3)]{AHLS}. In particular, $\{0\}$ is graded semiprime meaning that there is no nonzero nilpotent graded ideal (or equivalently, no nonzero graded ideal $J$ such that $J^2=\{0\}$). It follows that $\KP(\Lambda)$ is a graded semiprime ring by \cite[\S 2.11]{NOgraded}. Thus $\displaystyle{\bigcap_{P\in \Spec^{\gr}(\KP(\Lambda))}}P=\{0\}$. Since $I(x)\cap J\neq \{0\}$, there must exist a graded prime ideal $Q$ such that $I(x)\cap J\nsubseteq Q$. Then $Q\in D^{\gr}(x)\cap (\Spec^{\gr}(\KP(\Lambda))\setminus V^{\gr}(J))$, showing that $D^{\gr}(x)$ is not contained in $V^{\gr}(J)$. Since the sets $D^{\gr}(y)$, $y\in \KP(\Lambda)^h$ form a basis for the topology of $\Spec^{\gr}(\KP(\Lambda))$, $P\notin \Int(V^{\gr}(J))$. 

Now let $P\notin V^{\gr}(J^\perp)$. Then there exists $x\in J^\perp \setminus P$. Since $J^\perp$ is graded by \cite[Lemma 4.2]{Schenkel}, we can choose $x$ to be homogeneous. Then $P\in D^{\gr}(x)$. Now since $x\in J^\perp$, $I(x)J=\{0\}$ and so for any $Q\in D^{\gr}(x)$, $I(x)\subseteq Q$ or $J\subseteq Q$. Since $Q\in D^{\gr}(x)$, the first one cannot happen. Hence, $J\subseteq Q$ showing that $Q\in V^{\gr}(J)$. Therefore, $D^{\gr}(x)$ is an open neighborhood of $P$ contained in $V^{\gr}(J)$. So $P\in \Int(V^{\gr}(J))$, and our claim is established.

Now following exactly the same argument as in the proof of Theorem \ref{th regular ideals via the prime spectrum}, we get $V^{\gr}(I^{\perp \perp})=\overline{\Int(V^{\gr}(I))}$. If $I$ is regular, then from this equality it is clear that $V^{\gr}(I)$ is a regular closed set and hence $D^{\gr}(I)$ is a regular open set. Conversely if $D^{\gr}(I)$ is regular open, then $V^{\gr}(I)$ is regular closed and hence, 
\begin{equation}\label{eq theveq}
V^{\gr}(I)=\overline{\Int(V^{\gr}(I))}=V^{\gr}(I^{\perp\perp}).    
\end{equation}
Since both $I$ and $I^{\perp\perp}$ are graded, they are graded semiprime by \cite[Theorem 6.5(3)]{AHLS}. Arguing as before, it follows that $I=\displaystyle{\bigcap_{P\in V^{\gr}(I)}}P$ and $I^{\perp\perp}=\displaystyle{\bigcap_{P\in V^{\gr}(I^{\perp\perp})}}P$. Then $I=I^{\perp\perp}$ by using (\ref{eq theveq}). The remaining part can also be proved using the fact that any graded ideal of $\KP(\Lambda)$ is graded semiprime.
\end{proof}

With the above theorem in hand, we are now ready to state the following interesting theorem giving further evidence that the talented monoid effectively encodes the algebraic structure of the Kumjian--Pask algebra.

\begin{thm}\label{th lattice isomorphism of regular ideals via TM}
Let $\Lambda$ be a row-finite $k$-graph with no sources. Then the Boolean algebras $\reg(T_\Lambda)$ and $\reg^{\gr}(\KP(\Lambda))$ are isomorphic via the isomorphism sending $J$ in $\reg(T_\Lambda)$ to $I(H_J)$ in $\reg^{\gr}(\KP(\Lambda))$. Consequently, if $\Lambda$ and $\Omega$ are row-finite $k$-graphs without sources such that $T_\Lambda\cong T_\Omega$ as $\mathbb{Z}^k$-monoids, then $\reg^{\gr}(\KP(\Lambda))$ is isomorphic to $\reg^{\gr}(\KP(\Omega))$. 
\end{thm}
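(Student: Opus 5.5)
We compose isomorphisms of Boolean algebras obtained from the two spectral descriptions of regularity. By Theorem \ref{th regular ideals via the prime spectrum}, $\varphi:\reg(T_\Lambda)\to\mathcal{R}\mathcal{O}(\Spec_{\mathbb{Z}^k}(T_\Lambda))$, $J\mapsto D(J)$, is an order-isomorphism (using that $T_\Lambda$ is a conical refinement $\mathbb{Z}^k$-monoid by \cite[Theorem 3.18]{HMPS1}). By Theorem \ref{th regular ideal and spectrum of KPA}, $I\mapsto D^{\gr}(I)$ is an order-isomorphism $\reg^{\gr}(\KP(\Lambda))\to\mathcal{R}\mathcal{O}(\Spec^{\gr}(\KP(\Lambda)))$. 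Any homeomorphism $h:X\to Y$ induces a Boolean algebra isomorphism $\mathcal{R}\mathcal{O}(X)\to\mathcal{R}\mathcal{O}(Y)$, $U\mapsto h(U)$, since $h$ commutes with interior and closure. So it suffices to take the homeomorphism $h:\Spec_{\mathbb{Z}^k}(T_\Lambda)\to\Spec^{\gr}(\KP(\Lambda))$ of Theorem \ref{th the three term connection for KPA}, namely $h=\rho\circ\phi^{-1}$, $J_{\Lambda^0\setminus M}\mapsto I(\Lambda^0\setminus M)$, i.e.\ $P\mapsto I(H_P)$. Then the composite $J\mapsto (D^{\gr})^{-1}(h(D(J)))$ is an order-isomorphism between posets that are complete Boolean algebras, hence a Boolean algebra isomorphism.

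The key step is to identify this composite explicitly as $J\mapsto I(H_J)$, i.e.\ to show $h(D(J))=D^{\gr}(I(H_J))$ for every $\mathbb{Z}^k$-order ideal $J$. For a prime $P=J_{H}$ with $H=\Lambda^0\setminus M$, one has $J\subseteq J_H$ iff $H_J\subseteq H$ (lattice isomorphism \cite[Proposition 4.5]{HMPS1} between $\mathcal{H}_\Lambda$ and $\mathbb{Z}^k$-order ideals), iff $I(H_J)\subseteq I(H)$ (the lattice isomorphism between $\mathcal{H}_\Lambda$ and graded ideals of $\KP(\Lambda)$, \cite[Theorem 5.1]{Pino}). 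Taking complements gives $h(D(J))=D^{\gr}(I(H_J))$, equivalently $h(V(J))=V^{\gr}(I(H_J))$. Consequently, since $D(J)$ is regular open exactly when $J$ is regular, $D^{\gr}(I(H_J))$ is regular open, so $I(H_J)$ is regular by Theorem \ref{th regular ideal and spectrum of KPA}; conversely every regular graded ideal is $I(H)$ for some $H$, equal to $I(H_J)$ with $J=J_H$, and the same chain shows $J$ is regular. Injectivity and order-preservation/reflection follow from Lemma \ref{lem V(I)=V(J) implies I=J} on the monoid side and the semiprimeness argument (graded ideals are intersections of graded primes) on the algebra side, or simply from the underlying lattice isomorphism $J\mapsto I(H_J)$.

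The main obstacle I expect is precisely this compatibility of $V$-sets under $h$: one must be sure the homeomorphism of Theorem \ref{th the three term connection for KPA} is the one given by $M\mapsto I(\Lambda^0\setminus M)$ via \cite[Proposition 4.1]{Larki} and that the graded-ideal lattice isomorphism is inclusion-preserving in both directions, so that containment of ideals translates faithfully. Finally, for the consequence, a $\mathbb{Z}^k$-monoid isomorphism $T_\Lambda\cong T_\Omega$ preserves order ideals, the relation $||$, and hence $\perp$ and regularity, giving $\reg(T_\Lambda)\cong\reg(T_\Omega)$; composing with the isomorphisms just established yields $\reg^{\gr}(\KP(\Lambda))\cong\reg^{\gr}(\KP(\Omega))$.
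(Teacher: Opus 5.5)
Your proposal is correct and follows essentially the same route as the paper. You compose three isomorphisms: the order-isomorphism $J\mapsto D(J)$ of Theorem \ref{th regular ideals via the prime spectrum}, the isomorphism of regular open algebras induced by the homeomorphism $P\mapsto I(H_P)$ of Theorem \ref{th the three term connection for KPA}, and the inverse of the isomorphism in Theorem \ref{th regular ideal and spectrum of KPA}; the final statement then comes from the fact that a $\mathbb{Z}^k$-monoid isomorphism preserves $\perp$ and hence regularity.

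Your explicit check that $h(V(J))=V^{\gr}(I(H_J))$, using the inclusion-preserving correspondences of hereditary saturated sets with $\mathbb{Z}^k$-order ideals and with graded ideals, usefully fills in a step the paper only asserts when it identifies the composite as $J\mapsto I(H_J)$.
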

\begin{proof}
By Theorem \ref{th the three term connection for KPA}, the spaces $\Spec_{\mathbb{Z}^k}(T_\Lambda)$ and $\Spec^{\gr}(\KP(\Lambda))$ are homeomorphic. This induces a lattice isomorphism between the Boolean algebras $\mathcal{R}\mathcal{O}(\Spec_{\mathbb{Z}^k}(T_\Lambda))$ and $\mathcal{R}\mathcal{O}(\Spec^{\gr}(\KP(\Lambda)))$ which maps the regular open set $D(J)$ to the regular open set $D^{\gr}(I(H_J))$. Now by Theorems \ref{th regular ideals via the prime spectrum} and \ref{th regular ideal and spectrum of KPA}, it follows that $\reg(T_\Lambda)$ is isomorphic to $\reg^{\gr}(\KP(\Lambda))$. The isomorphism indeed maps $J$ to $I(H_J)$. The scheme of the proof is depicted in the following diagram:

\[
\begin{tikzcd}[row sep=huge, column sep=huge]
\text{Spec}_{\mathbb{Z}^k}(T_{\Lambda}) \arrow[r, "\cong", "\text{Theorem \ref{th the three term connection for KPA}}"'] \arrow[dr, phantom, "\bigg\Downarrow"] & \text{Spec}^{\text{gr}}(\KP(\Lambda)) \\
\mathcal{RO}\big(\text{Spec}_{\mathbb{Z}^k}(T_{\Lambda})\big) \arrow[r, "\cong"] \arrow[d, "\text{Theorem \ref{th regular ideals via the prime spectrum}}"', "\cong"] & \mathcal{RO}\big(\text{Spec}^{\text{gr}}(\KP(\Lambda))\big) \arrow[d, "\cong"', "\text{Theorem \ref{th regular ideal and spectrum of KPA}}"] \\
\text{Reg}(T_{\Lambda}) \arrow[r, "\cong"] & \text{Reg}^{\text{gr}}(\KP(\Lambda))
\end{tikzcd}
\]
Since any $\mathbb{Z}^k$-monoid isomorphism preserves and reflects the algebraic preorder, it naturally induces a lattice isomorphism between the Boolean algebras of regular $\mathbb{Z}^k$-order ideals. Hence, the final statement follows from the first part.
\end{proof}
Suppose $\reg\Spec^{\gr}(\KP(\Lambda))$ denotes the subspace of $\Spec^{\gr}(\KP(\Lambda))$ consisting of the regular graded prime ideals of $\KP(\Lambda)$. In view of the first part of Theorem \ref{th regular ideals via the prime spectrum}, it follows that the homeomorphism between $\Spec_{\mathbb{Z}^k}(T_\Lambda)$ and $\Spec^{\gr}(\KP(\Lambda))$ preserves the regular primes, i.e., $\reg\Spec_{\mathbb{Z}^k}$ is homeomorphic to $\reg\Spec^{\gr}(\KP(\Lambda))$. Again by Corollary \ref{cor regular prime order ideals}, $\reg\Spec_{\mathbb{Z}^k}(T_\Lambda)$ is homeomorphic to $\mathscr{P}\mathscr{F}il_r(T_\Lambda)$. We now wish to find the remaining piece of the puzzle by finding the combinatorial counterpart of these homeomorphic spaces. 

Let us fix the notations. Let $\Lambda$ be a row-finite $k$-graph without sources and $H$ a hereditary saturated subset of $\Lambda^0$. We define the \emph{reachability set} of $H$ by 
\begin{equation}\label{eq reachability}
\mathsf{R}(H):=r(s^{-1}(H)).
\end{equation}
The reader may note that this set is denoted by $\overline{H}$ in \cite{Schenkel}, but we prefer the notation $\mathsf{R}(H)$ since we use \textsf{bar} to denote the hereditary saturated closure. The set 
\begin{equation}\label{eq inaccessibility}
H^\perp:=\Lambda^0\setminus \mathsf{R}(H)
\end{equation}
is called the \emph{inaccessibility set} of $H$. For any vertex $w\in \Lambda^0$, we write $T(w):=s(r^{-1}(w))$. 

\begin{prop}\label{pro reachability and double perp}
For any row-finite $k$-graph $\Lambda$ without sources and a hereditary saturated subset $H$ of $\Lambda^0$, $H^\perp$ is a hereditary saturated subset of $\Lambda^0$ and $H^{\perp\perp}=\{w\in \Lambda^0~|~T(w)\subseteq \mathsf{R}(H)\}$.  
\end{prop}
\begin{proof}
Let $r(\lambda)\in H^\perp$ for some $\lambda\in \Lambda$. If $s(\lambda)\in \mathsf{R}(H)$, then we would have $r(\lambda)\in \mathsf{R}(H)$. This forces $s(\lambda)$ to be in $H^\perp$. Hence $H^\perp$ is hereditary. Now assume that $s(v\Lambda^\N)\subseteq H^\perp$ for some vertex $v$ and $\N\in \mathbb{N}^k$. If possible suppose $v\in \mathsf{R}(H)$. Then there exists $\lambda\in \Lambda$ such that $v=r(\lambda)$ and $s(\lambda)\in H$. Choose any $\mu\in s(\lambda)\Lambda^\N$ and let $\tau=\lambda\mu$. Then $\tau(0,\N)\in v\Lambda^\N$ and $s(\tau(\N,d(\tau)))=s(\mu)\in H$ since $r(\mu)=s(\lambda)\in H$ and $H$ is hereditary. But then it follows that $s(\tau(0,\N))\in \mathsf{R}(H)$, contradicting $s(v\Lambda^\N)\subseteq H^\perp$. Therefore, $v\in H^\perp$. This shows that $H^\perp$ is saturated. The proof of the remaining part is exactly the same as that of \cite[Proposition 3.12]{CGHaz}.
\end{proof}
Now consider a regular graded prime ideal $I$ of $\KP(\Lambda)$. The primeness implies $M:=\Lambda^0\setminus H(I)$ is a maximal tail of $\Lambda$. The regularity implies that \[H(I)=\{w\in \Lambda^0~|~T(w)\subseteq \mathsf{R}(H(I))\}\] by \cite[Theorem 4.6(iii)]{Schenkel}. This, together with Proposition \ref{pro reachability and double perp}, shows that $H(I)=H(I)^{\perp\perp}$. Then \[\mathsf{R}\left((\Lambda^0\setminus M)^\perp\right)=\mathsf{R}(H(I)^\perp)=\Lambda^0\setminus H(I)^{\perp\perp}=\Lambda^0\setminus H(I)=M.\] Now if $N$ is any maximal tail with the property that $N=\mathsf{R}\left((\Lambda^0\setminus N)^\perp\right)$, then $J=I(\Lambda^0\setminus N)$ is a graded prime ideal and since \[H(J)^{\perp\perp}=\Lambda^0\setminus \mathsf{R}\left(H(J)^\perp\right)=\Lambda^0\setminus \mathsf{R}\left((\Lambda^0\setminus N)^\perp\right)=\Lambda^0\setminus N=H(J),\] $J$ is regular by \cite[Theorem 4.6(iii)]{Schenkel}.

Define \[\mathscr{M}_r(\Lambda):=\{M\in \mathscr{M}(\Lambda)~|~M=\mathsf{R}\left((\Lambda^0\setminus M)^\perp\right)\}.\] We can now summarize the above discussion through the following diagram:  
\[
\begin{tikzcd}[row sep=1cm, column sep=0.5cm]
& \mathscr{P}\mathscr{F}il(T_\Lambda) \arrow[rr, "\cong"] 
& & \mathrm{Spec}_{\mathbb{Z}^k}(T_\Lambda) \arrow[dl, "\cong"'] \\
\mathscr{M}(\Lambda) \arrow[ur, "\cong"]  
& & \mathrm{Spec}^{\mathrm{gr}}(\KP(\Lambda)) \arrow[ll, "\cong"'{xshift=10pt}] \\
& \mathscr{P}\mathscr{F}il_r(T_\Lambda) \arrow[rr, dashed, "\cong"{xshift=-10pt}] \arrow[uu, hook, dashed] 
& & \mathrm{RegSpec}_{\mathbb{Z}^k}(T_\Lambda) \arrow[uu, hook] \arrow[dl, "\cong"']\\
\mathscr{M}_r(\Lambda) \arrow[uu, hook] \arrow[ur, dashed, "\cong"] 
& & \mathrm{RegSpec}^{\mathrm{gr}}(\KP(\Lambda)) \arrow[ll, "\cong"'] \arrow[uu, hook] 
\end{tikzcd}
\]
where the hook arrows indicate inclusions as subspaces and the homeomorphisms in the bottom row are the restrictions of the corresponding homeomorphisms in the top row.

\subsection{Talented monoid criterion for purely infinite higher-rank graph algebras} In \cite{HLM}, we obtained a talented monoid criterion for purely infinite simple Kumjian--Pask algebras and $C^*$-algebras of row-finite higher-rank graphs without sources. Using the talented monoid description of maximal tails obtained in Section \ref{sec maximal tail and TM}, we now obtain a sufficient criterion for purely infinite (not necessarily simple) higher-rank graph algebras. 

\begin{thm}\label{th sufficient criterion for purely infinite higher-rank algebras}
Let $\Lambda$ be a row-finite $k$-graph with no sources. Then the following conditions are equivalent for the talented monoid $T_\Lambda$.

$(i)$ Every prime $\mathbb{Z}^k$-order ideal $P$ of $T_\Lambda$ is primitive, and for any $0\neq a\in S=T_\Lambda/{P}$, there exist $x\in S$ and $\N\in \mathbb{N}^k\setminus \{0\}$ such that $^{\N} x< x \leq a$.

$(ii)$ Every prime $\mathbb{Z}^k$-order ideal $P$ of $T_\Lambda$ is primitive, and for any nonzero order ideal $J$ of $S=T_\Lambda/{P}$, there exist $x\in J$ and $\N\in \mathbb{N}^k\setminus \{0\}$ such that $^\N x< x$. 

$(iii)$ For any proper $\mathbb{Z}^k$-order ideal $I$ of $T_\Lambda$, $\mathbb{Z}^k$ acts freely on the quotient $T_\Lambda/{I}$, and for every prime $\mathbb{Z}^k$-order ideal $P$ of $T_\Lambda$ and any nonzero order ideal $J$ of $S=T_\Lambda/{P}$, there exist $x\in J$ and $\N\in \mathbb{N}^k\setminus \{0\}$ such that $^\N x< x$.

If any one of the above holds, then both $C^*(\Lambda)$ and $\KP(\Lambda)$ are purely infinite. 
\end{thm}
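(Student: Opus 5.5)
The proof has two parts: the equivalences $(i)\Leftrightarrow(ii)\Leftrightarrow(iii)$, which are monoid-theoretic, and the final assertion, which goes through maximal tails and the known graph-theoretic criterion for pure infiniteness.

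For $(ii)\Leftrightarrow(iii)$, Proposition \ref{pro free action on prime quotient is enough} applies directly, since $T_\Lambda$ is a refinement monoid. It says that every prime $\mathbb{Z}^k$-order ideal is primitive if and only if $\mathbb{Z}^k$ acts freely on $T_\Lambda/I$ for every proper $\mathbb{Z}^k$-order ideal $I$. The second clauses of $(ii)$ and $(iii)$ are identical.

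For $(i)\Rightarrow(ii)$, take a nonzero order ideal $J$ of $S=T_\Lambda/P$ and some $0\neq a\in J$. By $(i)$ there are $x\le a$ and $\N\neq 0$ with ${}^{\N}x<x$. Since $J$ is a down-set, $x\in J$, which is what $(ii)$ asks for.

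For $(ii)\Rightarrow(i)$, given $0\neq a\in S$, I apply $(ii)$ to $J=\langle a\rangle$. This gives $y\in\langle a\rangle$ and $\N\neq0$ with ${}^{\N}y<y$, and $y\le \sum_i {}^{\gamma_i}a$. Two things remain to arrange:
\begin{itemize}
\item[(a)] a single shift of $a$ dominating a piece of $y$;
\item[(b)] strictness of the resulting inequality.
\end{itemize}
For (a), I use refinement in $S$; quotients of refinement monoids by order ideals are again refinement monoids. This splits $y=\sum y_i$ with $y_i\le{}^{\gamma_i}a$. I then pass to a single generator shift: in $T_\Lambda$, everything is a sum of vertices $v(\M)$, and one can pick a vertex term $w(\M)\le a$ whose class in $S$ lies below some piece of $y$ after shifting. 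This reduction is the step I expect to be the main obstacle. The cleanest route is probably to use the graph description: $S\cong T_{\Lambda_M}$-like, i.e.\ $T_\Lambda/J_H$ is the talented monoid of the quotient $k$-graph $\Lambda\setminus H$ (\cite{HMPS1}). Then strict self-domination ${}^{\N}x<x$ for some $x$ below $w(0)$ can be obtained by moving along paths from $w$ into the support of $y$, as in the MT3 argument of Lemma \ref{lem TM characterization of MT3}. For (b), strictness is preserved because the action is free and ${}^{\N}x\neq x$.

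For the final assertion I translate into graph language. By Theorem \ref{th maximal tails via TM}, prime ideals $P$ correspond to maximal tails $M=\Lambda^0\setminus H_P$, and $T_\Lambda/P\cong T_{\Lambda_M}$ for the restricted $k$-graph $\Lambda_M=M\Lambda M$. Freeness of the action on all prime quotients gives, by the criterion in \cite{HLM,HMPS1}, that every such quotient graph is aperiodic, i.e.\ $\Lambda$ is strongly aperiodic. In particular every ideal of $\KP(\Lambda)$ and of $C^*(\Lambda)$ is graded, hence determined by hereditary saturated sets.

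The condition ${}^{\N}x<x\le a$ in each $T_{\Lambda_M}$ translates, via the confluence lemma, into every vertex of $M$ connecting to a cycle in $M$ with an entrance in $M$, i.e.\ every vertex is "infinite" in each quotient. This is the analogue of the Leavitt path algebra condition for pure infiniteness. I would then invoke the known characterization of purely infinite $k$-graph algebras: strongly aperiodic $k$-graphs in which every vertex of every quotient connects to an infinite-type cycle, as in the work of Sims and Brown--Clark--Sierakowski for $C^*(\Lambda)$ and its Kumjian--Pask analogue. Alternatively, I would argue directly: $\KP(\Lambda)/I(H)\cong\KP(\Lambda\setminus H)$, no quotient is a division ring because each quotient contains infinite idempotents, and for each $a$ the ideal $\KP a\KP$ contains a properly infinite idempotent $p_v$. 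This would follow from the \cite{HLM} simple-case argument applied to each quotient.
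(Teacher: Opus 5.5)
Your $(i)\Rightarrow(ii)$ and $(ii)\Leftrightarrow(iii)$ are exactly the paper's arguments. The gap is in $(ii)\Rightarrow(i)$. You apply the hypothesis to $\langle a\rangle$, which forces you to move self-domination from some $y\le\sum_i{}^{\gamma_i}a$ (with $\gamma_i\in\mathbb{Z}^k$) down to an element below $a$. You flag this as the main obstacle but never carry it out. Refinement only gives $y=\sum_i y_i$ with $y_i\le{}^{\gamma_i}a$, and ${}^{\N}y<y$ says nothing directly about any single $y_i$. Likewise, ``moving along paths from $w$ into the support of $y$'' produces self-domination only if you already know that the support of $y$ reaches a cycle, and that is the entire content of the step.

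The paper sidesteps this. It applies the hypothesis to $J=\{x\in S\mid x\le a\}$, a nonzero down-set containing $a$, so the $x\in J$ with ${}^{\N}x<x$ automatically satisfies $x\le a$. Note, though, that this $J$ is not in general closed under addition, so the one-line argument reads ``order ideal'' in (ii) and (iii) as ``down-set''. Under your stricter reading, the missing ingredient is a multi-vertex version of \cite[Lemma 4.2.1]{HLM}, which has to be proved via the confluence lemma. It would say: if $a=\sum_j w_j(\M_j)$ in $S\cong T_{\Lambda/H_P}$ and some $y\le\sum_{i,j}w_j(\M_j+\gamma_i)$ satisfies ${}^{\N}y<y$, then some $w_j$ reaches a cycle $c$ of $\Lambda/H_P$. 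Granting that, take $\lambda\in w_j\Lambda u$ with $u$ on $c$. Then $x=u(\M_j+d(\lambda))$ satisfies $x\le w_j(\M_j)\le a$ and ${}^{d(c)}x\le x$, and freeness gives strictness, as in your (b).

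For the final assertion your outline follows the paper. Primes correspond to maximal tails $M$, with $T_\Lambda/P\cong T_{\Lambda/(\Lambda^0\setminus M)}$. Freeness gives entrances in $M$ for generalized cycles in $M$ \cite[Proposition 4.2.6]{HLM}. Freeness on \emph{all} proper quotients, as in (iii) and not just on prime ones, gives strong aperiodicity \cite[Theorem 5.11]{HMPS1}. Condition (i) gives that every vertex of $M$ reaches a cycle in $M$ \cite[Lemma 4.2.1]{HLM}. Then \cite[Theorem 3.9]{PaskNew} handles $C^*(\Lambda)$.

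For $\KP(\Lambda)$, however, your ``direct'' alternative is not enough. Knowing that the ideal generated by $a$ contains a properly infinite idempotent does not yield $b\precsim a$ for every $b$ in that ideal. Also, the quotients $\KP(\Lambda/H)$ are not simple, so the simple-case argument of \cite{HLM} does not transfer verbatim. The paper instead uses the criterion of \cite{LPIS2}: every nonzero one-sided ideal of every quotient contains an infinite idempotent. It then builds such an idempotent as follows.
\begin{enumerate}
\item For $0\neq x$ in a right ideal $I$ of $\KP(\Lambda/H)$, aperiodicity gives $axb=P_w$ \cite[Proposition 4.9]{Pino}.
\item Jacobson semisimplicity gives a primitive, hence prime, ideal missing $P_w$, i.e.\ a maximal tail $M\ni w$.
\item $w$ reaches a generalized cycle $(\mu,\nu)$ with an entrance in $M$, so $P_{s(\nu)}$ is infinite.
\item $g=xbS_\lambda S_{\lambda^*}a\in I$ is an idempotent with $P_{s(\nu)}\precsim g$, hence infinite.
\end{enumerate}
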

\begin{proof}
$(i)\Rightarrow (ii)$ Let $P$ be any prime $\mathbb{Z}^k$-order ideal of $T_\Lambda$ and $S=T_\Lambda/{P}$. We only need to prove the second part of $(ii)$. So assume that $J$ is any nonzero order ideal of $S$. Choose any $0\neq a\in J$. Then by $(i)$, there exist $x\in S$ and $\N\in \mathbb{N}^k\setminus \{0\}$ such that $a\ge x>~^\N x$. Since $J$ is an order ideal, $x\in J$ and we are done.

$(ii)\Leftrightarrow (iii)$ Since $T_\Lambda$ is a refinement monoid, this is evident from Proposition \ref{pro free action on prime quotient is enough}. 

$(iii)\Rightarrow (i)$ Let $P$ be any prime $\mathbb{Z}^k$-order ideal of $T_\Lambda$. Since $\mathbb{Z}^k$ acts freely on $T_\Lambda/{P}$, $P$ is primitive. Now let $0\neq a\in S=T_\Lambda/{P}$. Let $J:=\{x\in S~|~x\le a\}$. Then $J$ is a nonzero order ideal of $S$. By the condition, there is an $x\in J$ and $\N\in \mathbb{N}^k\setminus \{0\}$ such that $^\N x<x$. Since $x\in J$, $x\le a$ and we are done.

Now assume that $(i)$ holds. Let $M$ be any maximal tail in $\Lambda$ and $(\mu,\nu)$ a generalized cycle in $M$. Let $P:=J_{\Lambda^0\setminus M}$. Then $P$ is a prime $\mathbb{Z}^k$-order ideal of $T_\Lambda$ and $S=T_\Lambda/{P}\cong T_{\Lambda/(\Lambda^0\setminus M)}$. Since $\mathbb{Z}^k$ acts freely on $S$, by \cite[Proposition 4.2.6$(ii)$]{HLM}, the generalized cycle $(\mu,\nu)$ $\Lambda/(\Lambda^0\setminus M)$ has an entrance $\tau$. Then $s(\tau)\in M$. Therefore $(\mu,\nu)$ has an entrance in $M$. Now suppose $v\in M$. Then $0\neq v(0)\in S=T_\Lambda/{P}$. By $(iii)$, there exists $x\in S$ and $\N\in \mathbb{N}^k\setminus \{0\}$ such that $v(0)\ge x>~^\N x$ in $S$. Applying \cite[Lemma 4.2.1]{HLM}, we can conclude that $v$ is reachable from a cycle (and hence generalized cycle) in $\Lambda/(\Lambda^0\setminus M)$ or equivalently, from a generalized cycle in $M$. Finally, by applying \cite[Theorem 3.9]{PaskNew}, it follows that $C^*(\Lambda)$ is purely infinite in the sense of Kirchberg--R\o rdam. 

For the Kumjian--Pask algebra case, by \cite[Theorems 3.4 and 4.2]{LPIS2}, it suffices to show that every nonzero one-sided ideal of every quotient of $\KP(\Lambda)$ contains an infinite idempotent. The first part of $(iii)$, together with \cite[Theorem 5.11]{HMPS1}, implies that $\Lambda$ is strongly aperiodic. Hence by \cite[Theorem 5.6]{Pino}, every ideal of $\KP(\Lambda)$ is graded and is of the form $I(H)$ for some hereditary saturated subset $H$ of $\Lambda^0$. So take any proper hereditary saturated subset $H$ and consider the quotient $\KP(\Lambda)/I(H)\cong \KP(\Lambda/H)$. We denote the Kumjian--Pask family in $\KP(\Lambda/H)$ by $(P,S)$. Let $I$ be any nonzero right ideal of $\KP(\Lambda/H)$. Our aim is to show that $I$ contains an infinite idempotent of $\KP(\Lambda/H)$. The case for left ideals will follow mutatis mutandis. Let $0\neq x\in I$. Since $\Lambda$ is strongly aperiodic, $\Lambda/H$ is aperiodic, and so we can apply \cite[Proposition 4.9]{Pino} to obtain $a,b\in \KP(\Lambda/H)$ such that $a xb=P_w$ for some $w\in \Lambda^0\setminus H$. Now $\KP(\Lambda/H)$ is Jacobson semisimple (which can be proved easily by using \cite[Proposition 4.9]{Pino}), and so there must exist a primitive ideal $P$ of $\KP(\Lambda/H)$ such that $P_w\notin P$, as otherwise $P_w$ would be in the intersection of all primitive ideals, which is nothing but the Jacobson radical. Since $P$ is primitive, it is prime. By \cite[Proposition 4.1]{Larki}, it is of the form $P=I\left(((\Lambda^0\setminus H)\setminus M\right)$ for some maximal tail $M$ in $\Lambda/H$. Now since $H$ is hereditary, $M$ is also a maximal tail in $\Lambda$. Since $P_w\notin P$, $w\in M$. By $(ii)$, there exists a generalized cycle $(\mu,\nu)$ in $M$ such that $w(\Lambda/H)s(\mu)\neq \emptyset$. Let $\lambda\in w(\Lambda/H)s(\mu)$. From the preceding paragraph, it follows that the generalized cycle $(\mu,\nu)$ has an entrance in $M$. Then the proof of \cite[Proposition 4.2.7]{HLM} implies that $S_\nu S_{\nu^*}$ is an infinite idempotent in $\KP(\Lambda/H)$. Consequently, $P_{s(\nu)}=S_{\nu^*}S_\nu$ is an infinite idempotent in $\KP(\Lambda/H)$. Now \[P_{s(\nu)}=S_{\lambda^*}P_wS_\lambda=S_{\lambda^*} a x b S_\lambda.\] Let $g=xbS_{\lambda}S_{\lambda^*}a$. Then $g^2=g$ and $g\in I$ since $I$ is a right ideal and $x\in I$. Moreover, since \[(S_{\lambda^*}a)g(xbS_{\lambda})=P_{s(\nu)}P_{s(\nu)}=P_{s(\nu)},\] we have $P_{s(\nu)}\precsim g$. Since Kumjian--Pask algebras are $s$-unital, by \cite[Lemma 3.9$(iii)$]{AGPM}, $g$ is an infinite idempotent of $\KP(\Lambda/H)$ which is in $I$. 
\end{proof}

When $\Lambda=E^*$ for a row-finite directed graph $E$, the talented monoid conditions stated in Theorem \ref{th sufficient criterion for purely infinite higher-rank algebras} are also necessary for pure infiniteness of the Kumjian--Pask algebra of $\Lambda$.

\begin{prop}\label{prop Purely infinite LPA via TM}
Let $E$ be a row-finite directed graph. Then $L(E)$ is purely infinite if and only if every prime $\mathbb Z$-order ideal $P$ of $T_E$ is primitive, and for any $0\neq a\in S=T_E/{P}$, there is an $x\in S$ and $i\in \mathbb N\setminus \{0\}$ such that $^i x< x \leq a$.
\end{prop}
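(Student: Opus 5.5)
The plan is to prove both directions by passing through the graph-theoretic characterization of purely infinite Leavitt path algebras. By the known result for row-finite graphs (see \cite{lpabook}), $L(E)$ is purely infinite if and only if every vertex is properly infinite, equivalently: (a) $E$ satisfies Condition (K), and (b) for every hereditary saturated $H$, every vertex of the quotient graph $E/H$ connects to a cycle.

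The talented-monoid side will be matched with these two conditions through the dictionary $M\leftrightarrow J_{E^0\setminus M}$ between maximal tails and prime $\mathbb Z$-order ideals (Theorem \ref{th the three term connection for KPA}), together with $T_E/J_H\cong T_{E/H}$.

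\emph{Sufficiency.} This is the case $k=1$ of Theorem \ref{th sufficient criterion for purely infinite higher-rank algebras}, which already gives pure infiniteness of $\KP(E^*)=L(E)$. For row-finite graphs with sinks, which the higher-rank setting excludes, I would repeat that argument directly in $L(E)$. I would use the Leavitt path algebra versions of the cited lemmas: \cite{hazli} for freeness of the action corresponding to Condition (L) on quotients, and the fact that ${}^ix<x\le a$ forces $a$ to lie above a vertex connecting to a cycle. Condition (L) in every quotient $E/H$ is exactly Condition (K). Each vertex of a maximal tail connecting to a cycle inside that tail, combined with the fact that every nonzero ideal of $L(E/H)$ contains a vertex (via the reduction theorem), yields infinite idempotents in every nonzero one-sided ideal of each quotient.

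\emph{Necessity.} Suppose $L(E)$ is purely infinite. Then $E$ has Condition (K), so every graded ideal quotient $L(E/H)$ satisfies Condition (L). By \cite[Lemma 2.2 \& Corollary 4.3]{hazli}, $\mathbb Z$ acts freely on $T_{E/H}\cong T_E/J_H$ for every $H$, so every prime $P$ is primitive. For the second clause, fix $P=J_{E^0\setminus M}$ with $S\cong T_{E/(E^0\setminus M)}$, and take $0\ne a\in S$. Then $a\ge v(n)$ for some vertex $v\in M$. Pure infiniteness of the quotient $L(E/(E^0\setminus M))$ means it has no nonzero ideal without infinite idempotents, so $v$ connects (inside $M$) to a cycle $c$ with $r(c)=w$. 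In the talented monoid, $v(0)\ge w(m)$ where $m$ is the length of the connecting path. Condition (K)/(L) on the quotient gives $c$ an exit $f$ in $M$, so the relations yield
\[ w(0)=w(|c|)+y \quad\text{with } y\ge s(f)(\cdot)\neq 0. \]
Hence $x:=w(m+n)$ satisfies ${}^{|c|}x< x\le a$ (strictness uses conicality and nonzero $y$, or the freeness just proved).

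\emph{Main obstacle.} The main obstacle is the sink case and the exact form of the Leavitt path algebra characterization. A vertex lying in a maximal tail may reach a sink in $M$, which would violate ``connects to a cycle''. I must check that pure infiniteness genuinely rules this out: a sink in a quotient gives a corner isomorphic to the field, hence a division quotient or an ideal with no infinite idempotent. I would first verify this via \cite[Lemma 4.2.1]{HLM}'s $k=1$ analogue in \cite{lpabook}.
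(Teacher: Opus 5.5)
Your proposal is correct and follows essentially the same route as the paper. For sufficiency, both you and the paper rerun the argument of Theorem \ref{th sufficient criterion for purely infinite higher-rank algebras} inside $L(E)$ and then invoke the graph-theoretic characterization \cite[Theorem 7.4]{AGPM}; for necessity, both use that characterization to get Condition (L) on $E/H_P$ (hence a free action via \cite{hazli}) together with a cycle reachable from $v$ inside $M$. One small point: to get the strict inequality ${}^{|c|}x<x$, rely on the freeness of the action on $S$, as the paper does, since conicality and $y\neq 0$ alone do not give $x\neq {}^{|c|}x$ without also invoking cancellativity; and your choice $x=w(m+n)$ correctly tracks the shift that the paper's write-up glosses over.
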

\begin{proof}
Suppose the condition holds in $T_E$. Note that the arguments used in the proof of Theorem \ref{th sufficient criterion for purely infinite higher-rank algebras} for Kumjian--Pask algebras can also be applied for $L(E)$ to show that every nonzero left (right) ideal of every quotient of $L(E)$ contains an infinite idempotent. Therefore, $L(E)$ is purely infinite in view of \cite[Theorem 7.4]{AGPM}. 

Conversely, assume that $L(E)$ is purely infinite. Suppose $P$ is a prime $\mathbb Z$-order ideal of $T_E$. Then $M:=E^0\setminus H_P$ is a maximal tail in $\Lambda$ and $S=T_E/P\cong T_{E/H_P}$. By \cite[Theorem 7.4]{AGPM}, every cycle in $M$ has an exit in $M$. Since $(E/H_P)^0=M$, it follows that the quotient graph $E/H_P$ satisfies Condition (L). Then by \cite[Corollary 4.3$(i)$]{hazli}, $\mathbb Z$ acts freely on $S$. Hence, $P$ is primitive. Let $0\neq a\in S$. In view of the isomorphism $S\cong T_{E/H_P}$, there exists a vertex $v\in M$ and $j\in \mathbb N$ such that $a\ge v(j)$. Now $v$ connects to a cycle $c$ in $M$. Let $l=|c|$. Then $v\ge s(c)\ge s(c)(l)$. But since $\mathbb Z$ acts freely on $S$ and $s(c)(0)\in S$, we must have $s(c)> s(c)(l)$. Hence, $a\ge s(c)(j)>~^ls(c)(j)$. This completes the proof.
\end{proof}

\textbf{AI disclosure statement:} The authors acknowledge the use of {\bf ChatGPT (GPT-5.6 Luna)} exclusively for proofreading, correcting typographical errors, and to draw the diagrams shown in this paper. All the core mathematical concepts, insights, and the structural arguments used in the proofs are the authors' original work and they take full responsibility for these.

\textbf{Acknowledgments:} This work was initiated at the Centre for Research in Mathematics and Data Science, Western Sydney University while the second author was visiting the first author in July--August 2026. Mukherjee wholeheartedly thanks Hazrat for his warm hospitality and all the support during the visit. Hazrat acknowledges Australian Research Council Discovery Project DP230103184.

\end{document}